\begin{document}
\numberwithin{equation}{section}

\def\1#1{\overline{#1}}
\def\2#1{\widetilde{#1}}
\def\3#1{\widehat{#1}}
\def\4#1{\mathbb{#1}}
\def\5#1{\frak{#1}}
\def\6#1{{\mathcal{#1}}}

\newcommand{\UH}{\mathbb{H}}
\newcommand{\de}{\partial}
\newcommand{\R}{\mathbb R}
\newcommand{\Ha}{\mathbb H}
\newcommand{\al}{\alpha}
\newcommand{\tr}{\widetilde{\rho}}
\newcommand{\tz}{\widetilde{\zeta}}
\newcommand{\tk}{\widetilde{C}}
\newcommand{\tv}{\widetilde{\varphi}}
\newcommand{\hv}{\hat{\varphi}}
\newcommand{\tu}{\tilde{u}}
\newcommand{\tF}{\tilde{F}}
\newcommand{\debar}{\overline{\de}}
\newcommand{\Z}{\mathbb Z}
\newcommand{\C}{\mathbb C}
\newcommand{\Po}{\mathbb P}
\newcommand{\zbar}{\overline{z}}
\newcommand{\G}{\mathcal{G}}
\newcommand{\So}{\mathcal{S}}
\newcommand{\Ko}{\mathcal{K}}
\newcommand{\U}{\mathcal{U}}
\newcommand{\B}{\mathbb B}
\newcommand{\oB}{\overline{\mathbb B}}
\newcommand{\Cur}{\mathcal D}
\newcommand{\Dis}{\mathcal Dis}
\newcommand{\Levi}{\mathcal L}
\newcommand{\SP}{\mathcal SP}
\newcommand{\Sp}{\mathcal Q}
\newcommand{\A}{\mathcal O^{k+\alpha}(\overline{\mathbb D},\C^n)}
\newcommand{\CA}{\mathcal C^{k+\alpha}(\de{\mathbb D},\C^n)}
\newcommand{\Ma}{\mathcal M}
\newcommand{\Ac}{\mathcal O^{k+\alpha}(\overline{\mathbb D},\C^{n}\times\C^{n-1})}
\newcommand{\Acc}{\mathcal O^{k-1+\alpha}(\overline{\mathbb D},\C)}
\newcommand{\Acr}{\mathcal O^{k+\alpha}(\overline{\mathbb D},\R^{n})}
\newcommand{\Co}{\mathcal C}
\newcommand{\Hol}{{\sf Hol}}
\newcommand{\Aut}{{\sf Aut}(\mathbb D)}
\newcommand{\D}{\mathbb D}
\newcommand{\oD}{\overline{\mathbb D}}
\newcommand{\oX}{\overline{X}}
\newcommand{\loc}{L^1_{\rm{loc}}}
\newcommand{\la}{\langle}
\newcommand{\ra}{\rangle}
\newcommand{\thh}{\tilde{h}}
\newcommand{\N}{\mathbb N}
\newcommand{\kd}{\kappa_D}
\newcommand{\Hr}{\mathbb H}
\newcommand{\ps}{{\sf Psh}}
\newcommand{\Hess}{{\sf Hess}}
\newcommand{\subh}{{\sf subh}}
\newcommand{\harm}{{\sf harm}}
\newcommand{\ph}{{\sf Ph}}
\newcommand{\tl}{\tilde{\lambda}}
\newcommand{\gdot}{\stackrel{\cdot}{g}}
\newcommand{\gddot}{\stackrel{\cdot\cdot}{g}}
\newcommand{\fdot}{\stackrel{\cdot}{f}}
\newcommand{\fddot}{\stackrel{\cdot\cdot}{f}}

\def\Re{{\sf Re}\,}
\def\Im{{\sf Im}\,}

\newcommand{\Real}{\mathbb{R}}
\newcommand{\Natural}{\mathbb{N}}
\newcommand{\Complex}{\mathbb{C}}
\newcommand{\ComplexE}{\overline{\mathbb{C}}}
\newcommand{\Int}{\mathbb{Z}}
\newcommand{\UD}{\mathbb{D}}
\newcommand{\clS}{\mathcal{S}}
\newcommand{\gtz}{\ge0}
\newcommand{\gt}{\ge}
\newcommand{\lt}{\le}
\newcommand{\fami}[1]{(#1_{s,t})}
\newcommand{\famc}[1]{(#1_t)}
\newcommand{\ts}{t\gt s\gtz}
\newcommand{\classCC}{\tilde{\mathcal C}}
\newcommand{\classS}{\mathcal S}

\newcommand{\Unit}[1]{\begin{trivlist}\item\noindent{#1}}
\newcommand{\Step}[1]{\Unit{\bf Step~#1.}}
\newcommand{\step}[1]{\Unit{\underline{Step~#1}.}}
\def\endunit{\end{trivlist}}
\let\endstep=\endunit
\newcommand{\proofbox}{\hfill$\Box$}
\newcommand{\Case}[1]{\Unit{\it Case #1.}}

\newcommand{\mcite}[1]{\csname b@#1\endcsname}
\newcommand{\UC}{\mathbb{T}}

\newcommand{\Moeb}{\mathrm{M\ddot ob}}

\newcommand{\dAlg}{{\mathcal A}(\UD)}
\newcommand{\diam}{\mathrm{diam}}



\newcommand{\Spec}{\Lambda^d}
\newcommand{\SpecR}{\Lambda^d_R}
\newcommand{\Prend}{\mathrm P}
\newcommand{\classM}{\mathbb M}
\newcommand{\Log}{\mathop{\mathrm{Log}}}

\def\ch{\mathop{\mathrm{ch}}}
\def\th{\mathop{\mathrm{th}}}
\def\sh{\mathop{\mathrm{sh}}}
\def\tg{\mathop{\mathrm{tg}}}
\newcommand{\CarClass}{{\mathcal C}}
\newcommand{\ParClass}{{\mathcal V}}



\def\cn{{\C^n}}
\def\cnn{{\C^{n'}}}
\def\ocn{\2{\C^n}}
\def\ocnn{\2{\C^{n'}}}
\def\je{{\6J}}
\def\jep{{\6J}_{p,p'}}
\def\th{\tilde{h}}


\def\dist{{\rm dist}}
\def\const{{\rm const}}
\def\rk{{\rm rank\,}}
\def\id{{\sf id}}
\def\aut{{\sf aut}}
\def\Aut{{\sf Aut}}
\def\CR{{\rm CR}}
\def\GL{{\sf GL}}
\def\Re{{\sf Re}\,}
\def\Im{{\sf Im}\,}
\def\U{{\sf U}}

\def\la{\langle}
\def\ra{\rangle}

\emergencystretch15pt \frenchspacing

\newtheorem{theorem}{Theorem}[section]
\newtheorem{lemma}[theorem]{Lemma}
\newtheorem{proposition}[theorem]{Proposition}
\newtheorem{corollary}[theorem]{Corollary}

\makeatletter
\newtheorem{result}{Theorem}\setcounter{result}{64}\def\theresult{\noexpand\char\the\value{result}}
\makeatother

\theoremstyle{definition}
\newtheorem{definition}[theorem]{Definition}
\newtheorem{example}[theorem]{Example}

\theoremstyle{remark}
\newtheorem{remark}[theorem]{Remark}
\numberwithin{equation}{section}

\long\def\REM#1{\relax}

\newcommand{\func}[1]{\mathop{\mathrm{#1}}}
\renewcommand{\Re}{\func{Re}}
\renewcommand{\Im}{\func{Im}}

\newenvironment{mylist}{\begin{list}{}%
{\labelwidth=2em\leftmargin=\labelwidth\itemsep=.4ex plus.1ex minus.1ex\topsep=.7ex plus.3ex
minus.2ex}%
\let\itm=\item\def\item[##1]{\itm[{\rm ##1}]}}{\end{list}}

\title[Loewner Theory in annulus II]{Loewner Theory in annulus II: Loewner chains}

\author[M. D. Contreras]{Manuel D. Contreras$^\dag$}

\author[S. D\'{\i}az-Madrigal]{Santiago D\'{\i}az-Madrigal$^\dag$}
\address{Camino de los Descubrimientos, s/n\\
Departamento de Matem\'{a}tica Aplicada II\\
Escuela T\'{e}cnica Superior de Ingenier\'\i a\\
Universidad de Sevilla\\
Sevilla, 41092\\
Spain.}\email{contreras@us.es} \email{madrigal@us.es}

\author[P. Gumenyuk]{Pavel Gumenyuk$^\ddag$}
\address{Department of
Mathematical Sciences\\ Norwegian University of Science and Technology \\
Trondheim 7491, Norway.} \email{Pavel.Gumenyuk@math.ntnu.no}

\expandafter\def\csname subjclassname@2010\endcsname{\textup{2010}
Mathematics Subject Classification}

\date{\today }
\subjclass[2010]{Primary 30C35, 30C20, 30D05; Secondary 30C80, 34M15}

\keywords{Univalent functions, annulus, Loewner chains, Loewner evolution,  evolution family,
parametric representation}

\thanks{$^\dag\,^\ddag$ Partially supported by the ESF Networking Programme ``Harmonic and Complex Analysis
and its Applications'' and by \textit{La Consejer\'{\i}a de Econom\'{\i}a, Innovaci\'{o}n y Ciencia
de la Junta de Andaluc\'{\i}a} (research group FQM-133)}

\thanks{$^\dag$ Partially supported by the \textit{Ministerio
de Ciencia e Innovaci\'on} and the European Union (FEDER), project MTM2009-14694-C02-02}

\thanks{$^\ddag$ Supported by a grant from Iceland, Liechtenstein and Norway through the EEA Financial Mechanism.
Supported and coordinated by Universidad Complutense de Madrid and by Instituto de Matem\'{a}ticas
de la Universidad de Sevilla. Partially supported by the {\it Scandinavian Network ``Analysis
and Applications''} (NordForsk), project \#080151,%
\ and the {\it Research Council of Norway}, project \#177355/V30}

\begin{abstract}
Loewner Theory, based on dynamical viewpoint, proved itself to be a
powerful tool in Complex Analysis and its applications. Recently
Bracci et al \cite{BCM1,BCM2,SMP} have proposed a new approach
bringing together all the variants of the (deterministic) Loewner
Evolution in a simply connected reference domain. This paper is
devoted to the construction of a general version of Loewner Theory
for the annulus launched in~\cite{SMP3}. We introduce the general
notion of a Loewner chain over a system of annuli and obtain a
1-to-1 correspondence between Loewner chains and evolution families
in the doubly connected setting similar to that in the Loewner
Theory for the unit disk. Futhermore, we establish a conformal
classification of Loewner chains via the corresponding evolution
families and via semicomplete weak holomorphic vector fields.
Finally, we extend the explicit characterization of the semicomplete
weak holomorphic vector fields obtained in~\cite{SMP3} to the
general case.
\end{abstract}

\maketitle

\tableofcontents

\section{Introduction}
\newcommand{\pr}{\mathop{\mathrm{pr}}}

Loewner Theory can be regarded as a theory providing a parametric
representation of univalent functions in the unit
disk~$\UD:=\{z:|z|<1\}$ based on an infinitesimal description of the
semigroup of injective holomorphic self-maps of~$\UD$. Originating
in Loewner's paper~\cite{Loewner} of 1923, this theory gave a great
impact in the development of Complex Analisys, in which connection
one might recall, e.g., its crucial role in the proof of the famous
Bieberbach's Conjecture (see, e.g., \cite[Chapter 17]{Conway2})
given by de Branges~\cite{deBranges} in 1984.

From another point of view, Loewner Theory can be seen as an
analytic tool to describe monotonic (expanding or contracting)
domain dynamics in the plane. A stochastic version of such dynamics
(SLE), introduced by Schramm~\cite{Schramm} in 2000, is of great
importance\footnote{Which is testified by two Fields Medals, awarded
to Wendelin Werner in 2006 and to Stanislav Smirnov in 2010.}
because of its intrinsic connection to classical lattice models of
Statistical Physics such as percolation and the planar Ising model.

We note also that the well celebrated free-boundary Hele-Shaw
problem describing 2D filtration processes (see, e.g.,
\cite{GusVas}) is driven by a non-linear analogue of the classical
Loewner\,--\,Kufarev PDE, playing one of the central roles in
Loewner Theory. Finally, we would like to mention recently
discovered relations between classical Loewner Theory, integrable
systems and representation of the Virasoro algebra, which appears in
a number of fundamental problems in Mathematical Physics, see
\cite{Roland, MPV1, MPV2, VasMark-Sevilla}.

According to the new general approach in Loewner
Theory~\cite{BCM1,BCM2} introduced recently by Bracci and the first
two  authors, the essence of the modern Loewner Theory resides in
the connection and interplay between three basic notions: {\it
evolution families}, {\it Loewner chains}, and {\it semicomplete
weak holomorphic vector fields}.

This paper is a sequel of~\cite{SMP3} and devoted to the
construction of a general version of the Loewner Theory for doubly
connected domains. For details concerning the classical and
modern Loewner Theory in simply connected case and for the history
of its extension to multiply connected case we refer the reader
to~\cite{SMP3} and references cited therein. A historical survey on
Loewner Theory and related references can be also found
in~\cite{letters}.

In~\cite{SMP3} we introduced a general notion of {\it evolution family} over an
increasing continuous family of annuli and established a 1-to-1 correspondence
between these evolution families and semicomplete weak holomorphic vector
fields. In the case of all annuli being non-degenerate we also obtained an
explicit representation of the involved semicomplete weak holomorphic vector
fields.

In the present paper we will introduce a general notion of Loewner
chain in the doubly connected setting and study its relation to
evolution families and semicomplete weak holomorphic vector fields.
Moreover, we will establish a conformal classification of Loewner
chains and obtain an explicit representation of semicomplete weak
holomorphic vector fields in a more general case than the one
considered in~\cite{SMP3}, allowing the annuli to degenerate into a
punctured disk starting from some point.

\subsection{Preliminaries}\label{SS_prel}
Now we are going to introduce some definitions and results
from~\cite{SMP3} necessary for our discussion.

In comparison with the simply connected setting, a new feature in the doubly
(and more generally, multiply) connected case is that in order to develop a rich theory, instead of a static
reference domain (the unit disk) one has to consider a family of canonical
domains $(D_t)_{t\ge0}$, with evolution families being formed by holomorphic
mappings $ \varphi_{s,t}:D_s\to D_t$, $0\le s\le t$. This explains the reason
for the following definition.

Denote $\mathbb A_{r,R}:=\{z:r<|z|<R\}$, $\mathbb A_r:=\mathbb
A_{r,1}$, $0\le r< R\le+\infty$, and let $\UD^*$ stand for $\mathbb
A_{0}=\UD\setminus\{0\}$. For $d\in[1,+\infty]$ by $AC^d(X,Y)$,
$X\subset \Real$, $Y\subset\Complex$, we denote the class of all
locally absolutely continuous functions $f:X\to Y$ such that $f'\in
L^d_{\mathrm{loc}}(X,\Complex)$. Finally, let
\begin{equation}\label{EQ_omega}
\omega(r):=\left\{%
\begin{array}{ll}
   -\pi/\log r,& \text{if~}r\in(0,1),\\%
   0,& \text{if~}r=0.
\end{array}\right.
\end{equation}

\begin{definition}[\cite{SMP3}]\label{def-cansys}
Let $d\in[1,+\infty]$ and $(D_t)_{t\ge0}$ be a family of annuli
$D_t:=\mathbb A_{r(t)}$. We will say that $(D_t)$ is a {\it (doubly
connected) canonical domain system of order~$d$} (or in short, a
{\it canonical $L^d$-system}) if the function $t\mapsto
\omega(r(t))$ belongs to $AC^d\big([0,+\infty),[0,+\infty)\big)$ and
does not increase. If $r(t)\equiv0$, then the canonical domain
system~$(D_t)$ will be called {\it degenerate}. If on the contrary
$r(t)$ does not vanish, then ~$(D_t)$ will be called {\it
non-degenerate}. Finally, if there exists $T>0$ such that $r(t)>0$
for all $t\in[0,T)$ and $r(t)=0$ for all $t\ge T$, then we will say
that~$(D_t)$ is {\it of mixed type}.
\end{definition}
\begin{remark}
The condition that $t\mapsto \omega(r(t))$ is of class $AC^d$
implies that $t\mapsto r(t)$ also belongs to
$AC^d\big([0,+\infty),[0,1)\big)$. If $r(t)>0$ for all~$t\ge0$, or
$d=1$, then the converse is also true and we can replace
$\omega(r(t))$ by $r(t)$ in the above definition. However, in
general we do not know whether this is possible, because some proofs
in~\cite{SMP3} use essentially the requirement  that
$t\mapsto\omega(r(t))$ is of class~$AC^d$.
\end{remark}

Now we can introduce the definition of an evolution family for the
doubly connected setting.

\begin{definition}[\cite{SMP3}]\label{def-ev}
Let $(D_t)_{t\ge0}$ be a canonical domain system of
order~$d\in[1,+\infty]$. A family $(\varphi_{s,t})_{0\leq s\leq
t<+\infty}$ of holomorphic mappings $\varphi_{s,t}:D_s\to D_t$ is
said to be an {\it evolution family of order $d$ over~$(D_t)$} (in
short, an {\it $L^d$-evolution family}) if the following conditions
are satisfied:
\begin{mylist}
\item[EF1.] $\varphi_{s,s}=\mathsf{id}_{D_s},$

\item[EF2.] $\varphi_{s,t}=\varphi_{u,t}\circ\varphi_{s,u}$ whenever $0\le
s\le u\le t<+\infty,$

\item[EF3.] for any closed interval $I:=[S,T]\subset[0,+\infty)$ and any $z\in D_S$ there exists a
non-negative function ${k_{z,I}\in L^{d}\big([S,T],\mathbb{R}\big)}$
such that
\[
|\varphi_{s,u}(z)-\varphi_{s,t}(z)|\leq\int_{u}^{t}k_{z,I}(\xi)d\xi
\]
whenever $S\leq s\leq u\leq t\leq T.$
\end{mylist}
Suppressing the language we will refer also to the pair $\mathcal
E:=\big((D_t), (\varphi_{s,t})\big)$ as an evolution family of
order~$d$ and apply terms {\it degenerate}, {\it non-degenerate},
{\it of mixed type} to $\mathcal E$ whenever they are applicable to
the canonical domain system~$(D_t)$.
\end{definition}

The notion of weak holomorphic vector field, as introduced
in~\cite{SMP3}, in the doubly connected setting can be defined as
follows. Let $\pr_{\Real}$ stand for the projection
$\Complex\times\Real\ni(z,t)\mapsto t\in\Real$.

\begin{definition}\label{D_WHVF}
Let  $d\in[1,+\infty]$ and $(D_t)$ be a canonical domain system of order~$d$.  A function $G:\mathcal
D\to\Complex$, where $\mathcal D:=\{(z,t):t\ge0,\,\,z\in D_t\}$,  is said to be a {\it weak
holomorphic vector field} of order~$d$ over $(D_t)$, if it satisfies the following conditions:
\begin{mylist}
\item[WHVF1.] For each $z\in\Complex$ the function $G(z,\cdot)$ is measurable in~$E_z:=\{t\ge0:z\in D_t\}$.
\item[WHVF2.] For each $t\ge0$ the function $G(\cdot,t)$ is holomorphic in $D_t$.

\item[WHVF3.] For each compact set $K\subset\mathcal D$ there exists a non-negative function $k_K\in
L^d\big(\pr_{\mathbb R}(K),\Real\big)$, where $\pr_{\mathbb
R}(K):=\{t\ge0:~\exists\,z\in\Complex\quad(z,t)\in K\}$, such that
$$%
|G(z,t)|\le k_K(t),\quad\text{for all $(z,t)\in K$}.
$$%
\end{mylist}
\end{definition}

\begin{definition}
A weak holomorphic vector field $G$ over a canonical domain
system~$(D_t)$ is said to be {\it semicomplete}, if for any $s\ge0$
and any $z\in D_s$ the following initial value problem for the
Carath\'eodory ODE
$$
\dot w=G(w,t),\quad w|_{t=s}=z,
$$
has a solution defined for all $t\ge s$.
\end{definition}

In~\cite{SMP3} we proved the following statement establishing a
1-to-1 correspondence between evolution families and semicomplete
weak holomorphic vector fields.

\begin{result}[\protect{\cite[Theorem 5.1]{SMP3}}]\label{TH_EFsWHVF} The following two assertions hold:
\begin{mylist}
\item[(A)] For any $L^d$-evolution family~$(\varphi_{s,t})$ over the canonical
domain system~$(D_t)$ there exists an essentially unique semicomplete weak holomorphic vector field
$G:\mathcal D\to\Complex$ of order~$d$ and a null-set $N\subset [0,+\infty)$ such that for all
$s\ge0$ the following statements hold:
\end{mylist}
\begin{mylist}
\item[(i)] the mapping $[s,+\infty)\ni t\mapsto \varphi_{s,t}\in\Hol(D_s,\Complex)$
is locally absolutely continuous;
\item[(ii)] the mapping $[s,+\infty)\ni t\mapsto \varphi_{s,t}\in\Hol(D_s,\Complex)$ is
differentiable for all $t\in[s,+\infty)\setminus N$;
\item[(iii)] $d\varphi_{s,t}/dt=G(\cdot,t)\circ\varphi_{s,t}$ for all $t\in[s,+\infty)\setminus N$.
\end{mylist}\vskip3mm

\begin{mylist}
\item[(B)] For any semicomplete weak holomorphic vector field ${G:\mathcal D\to\Complex}$
of order~$d$ the formula $\varphi_{s,t}(z):=w^*_s(z,t)$, $t\ge s\ge0$, $z\in D_s$, where
$w_s^*(z,\cdot)$ is the unique non-extendable solution to the initial value problem
\begin{equation}\label{EQ_genGolKom}%
 \dot w=G(w,t),\quad w(s)=z,
\end{equation}%
defines an $L^d$-evolution family over the canonical domain system~$(D_t)$.
\end{mylist}
\end{result}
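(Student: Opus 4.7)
The plan is to prove the two directions separately, with Part (B) being the natural foundation and Part (A) built on top via a semigroup differentiation argument.

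For Part (B), the conditions WHVF1--WHVF3 place $G$ squarely in the Carath\'eodory class, so the initial value problem (\ref{EQ_genGolKom}) has a unique non-extendable solution $w^*_s(z,\cdot)$, which is globally defined on $[s,+\infty)$ by semicompleteness. Setting $\varphi_{s,t}(z):=w^*_s(z,t)$, property EF1 is immediate. Property EF2 follows from uniqueness, since $t\mapsto\varphi_{u,t}(\varphi_{s,u}(z))$ and $t\mapsto\varphi_{s,t}(z)$ solve the same ODE with the same datum at $t=u$. Property EF3 follows by integrating $\varphi_{s,t}(z)-\varphi_{s,u}(z)=\int_u^t G(\varphi_{s,\tau}(z),\tau)\,d\tau$ and applying WHVF3 to a compact set containing the relevant orbit segments. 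Holomorphicity of $z\mapsto\varphi_{s,t}(z)$ in $D_s$ can be obtained by the Cauchy integral formula on a closed disk compactly contained in $D_s$, transferring uniform control of the ODE flow to uniform convergence of difference quotients. The only global geometric check is that $\varphi_{s,t}(D_s)\subset D_t$; this must be extracted from the fact that $G$ is a weak holomorphic vector field over $(D_t)$, so solutions cannot exit the moving annuli without contradicting the definition of the non-extendable solution.

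For Part (A), the first step is to upgrade EF3, which is a pointwise absolute continuity estimate, to locally absolute continuity of the map $[s,+\infty)\ni t\mapsto\varphi_{s,t}\in\Hol(D_s,\Complex)$. This is standard: bound $|\varphi_{s,t}(z)-\varphi_{s,u}(z)|$ on a boundary circle compactly contained in $D_s$ via EF3, then differentiate under the Cauchy integral to obtain uniform control on a smaller disk. Consequently, for each $s$ there is a null set $N_s\subset[s,+\infty)$ outside of which $t\mapsto\varphi_{s,t}$ is differentiable in the $\Hol$--topology. One then defines
\[
G(w,t_0):=\lim_{h\to 0^+}\frac{\varphi_{t_0,t_0+h}(w)-w}{h}
\]
for $w\in D_{t_0}$, using the case $s=t_0$ at points where the limit exists in the holomorphic topology. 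The heart of the proof, and the main obstacle, is to produce a \emph{single} null set $N$ such that (i)--(iii) hold simultaneously for every starting time $s\ge 0$. Differentiating the semigroup identity $\varphi_{s,t}=\varphi_{u,t}\circ\varphi_{s,u}$ at $t=u$ formally yields $(\partial_t\varphi_{s,t})|_{t=u}=G(\cdot,u)\circ\varphi_{s,u}$, i.e.\ condition (iii); the technical work is a Fubini/Lebesgue differentiation argument, applied to the set of $(s,t)$ for which this identity fails, combined with the local integrability of $t\mapsto G(\varphi_{s,t}(z),t)$ inherited from the Cauchy integral bound.

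Once $G$ and $N$ are in place, the verification of WHVF1--WHVF3 is routine: measurability in $t$ comes from the $\varlimsup$ of the defining difference quotients; holomorphicity in $z$ is automatic since $G(\cdot,t)$ is a locally uniform limit of holomorphic maps; and the local $L^d$--bound is the uniform Cauchy-integral estimate obtained in step one. Semicompleteness of $G$ is inherited directly from the existence of the family $(\varphi_{s,t})$. Essential uniqueness is the final point: if two semicomplete weak holomorphic vector fields $G_1,G_2$ both satisfy (i)--(iii) for the same evolution family, then $G_1(\varphi_{s,t}(z),t)=G_2(\varphi_{s,t}(z),t)$ for a.e.\ $t$; fixing $s=t_0$ and $z=w$ shows $G_1(w,t)=G_2(w,t)$ for a.e.\ $t$, giving coincidence as elements of the appropriate $L^d$-class. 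I expect the only genuinely delicate step to be the common-null-set construction in Part (A), which is why working with the function-space-valued map $t\mapsto\varphi_{s,t}\in\Hol(D_s,\Complex)$, rather than with pointwise evaluations, is essentially forced.
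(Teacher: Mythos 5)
First, a practical caveat: the statement labelled \ref{TH_EFsWHVF} is imported verbatim from~\cite[Theorem~5.1]{SMP3}; the present paper contains no proof of it, so there is no internal argument to compare your proposal against. What I can do is assess whether your outline, if filled in, would actually work.

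The overall architecture --- Part~(B) as a direct Carath\'eodory ODE flow construction, Part~(A) as an upgrade of~EF3 to absolute continuity in the $\Hol$-topology followed by differentiation of the semigroup law --- is the standard one for this family of results and is sound at that level. The essential-uniqueness argument you sketch is also fine. But there is a genuine gap at the step you yourself flag as the heart of the matter. The ``Fubini/Lebesgue differentiation argument on the set of $(s,t)$ for which the identity fails'' does not produce a \emph{single} null set: knowing that each $t$-slice of a planar set is null (which is what the $s$-by-$s$ argument gives) says nothing about its projection onto the $t$-axis. The device that actually closes this gap is Vitali's theorem: take $N:=N_0$; for $t\notin N_0$ and fixed $s\in(0,t]$, the semigroup identity $\varphi_{0,t}=\varphi_{s,t}\circ\varphi_{0,s}$ shows the incremental quotients $(\varphi_{s,t+h}-\varphi_{s,t})/h$ converge on the open subdomain $\varphi_{0,s}(D_0)\subset D_s$ as $h\to 0$; since these quotients form a normal family in~$\Hol(D_s,\C)$ (by the Cauchy-type bound you derive from EF3), convergence on a subdomain propagates to all of~$D_s$. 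That substitution is not a cosmetic change --- without it, your Part~(A) does not go through as described.

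Two smaller issues are worth fixing as well. In Part~(A) you define $G(\cdot,t_0)$ only for $t_0\notin N$; you must also assign $G(\cdot,t):=0$ (say) on the null set so that WHVF1--WHVF3 are statements about a function defined on all of~$\mathcal D$. And throughout, the proposal does not engage with the feature that distinguishes this setting from the unit-disk case treated in~\cite{BCM1}: the domains $D_t$ shrink with~$t$, so in Part~(B) one must verify that a solution of $\dot w=G(w,t)$ actually satisfies $w(t)\in D_t$ for all~$t\ge s$ --- the vector field is simply not defined outside $\mathcal D$, so this is built into the notion of solution used in Definition~\ref{D_WHVF}, but it should be said --- and in Part~(A) the family $(\varphi_{s,t+h})_h$ and its limit live in~$\Hol(D_s,\C)$ rather than in a single fixed function space, which affects how normality and Vitali are invoked.
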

The exact meaning of the notions of absolute continuity and differentiability
of the mapping from (i)\,--\,(iii) in the above theorem is given
by~\cite[Definitions~2.7 and 2.8]{SMP3}.

The characterization of semicomplete weak holomorphic vector fields
we established in~\cite{SMP3} involves some notions from
Function Theory in the annulus. The analogue of the Schwartz kernel
$\mathcal K_0(z):=(1+z)/(1-z)$ for an annulus $\mathbb
A_{r}:=\{z:r<|z|<1\}$, $r\in(0,1)$, the so-called {\it Villat
kernel}, is defined by the following formula (see,
e.\,g.,~\cite{GoluzinM} or~\cite[\S\,V.1]{Aleksandrov}):
\begin{equation}\label{EQ_Villat_kernel}
\mathcal
K_r(z):=\lim\limits_{n\to+\infty}\sum\limits_{\nu=-n}^n\frac{1+r^{2\nu}z}{1-r^{2\nu}z}=\frac{1+z}{1-z}+\sum_{\nu=1}^{+\infty}
\left(\frac{1+r^{2\nu}z}{1-r^{2\nu}z}+\frac{1+z/r^{2\nu}}{1-z/r^{2\nu}}\right).
\end{equation}
It is known (see e.g.\,\cite[Theorem~2.2.10]{Vaitsiakhovich}) that for any function
$f\in\Hol(\mathbb A_r,\Complex)$ which is continuous in $\overline{\mathbb A_r}$,
\begin{multline}\label{EQ_theVillatFormula}
f(z)=\int_\UC\mathcal K_r(z\xi^{-1})\Re f(\xi)\,\frac{|d\xi|}{2\pi}+\int_\UC \big[\mathcal
K_r(r\xi/z)-1\big]\Re f(r\xi)\,\frac{|d\xi|}{2\pi}\\+i\int_{\UC}\Im
f(\rho\xi)\frac{|d\xi|}{2\pi}\quad \text{for all } z\in\mathbb A_r,~\rho\in[r,1].
\end{multline}

\begin{definition}
Let $r\in(0,1)$. By {\it the class $\ParClass_r$} we will mean the collection of all functions
$p\in\Hol(\mathbb A_r,\Complex)$ having the following integral representation
\begin{equation}\label{EQ_represV}
p(z)=\int_\UC\mathcal K_r(z/\xi)d\mu_1(\xi)+\int_\UC\big[1-\mathcal
K_r(r\xi/z)\big]d\mu_2(\xi),\quad z\in\mathbb A_r,
\end{equation}
where $\mu_1$ and $\mu_2$ are positive Borel measures on the unit circle~$\UC$ subject to the
condition~$\mu_1(\UC)+\mu_2(\UC)=1$.
\end{definition}

\begin{remark}\label{RM_mu1mu2-unique}
From the proof of~\cite[Theorem~1]{Zmorovich} it is evident that
given $p\in\ParClass_r$, the measures $\mu_1$ and $\mu_2$ in
representation~\eqref{EQ_represV} are unique. (See also the proof of
\cite[Lemma~5.11]{SMP3}.)
\end{remark}


\begin{result}[\protect{\cite[Theorem 5.6]{SMP3}}]\label{TH_semi-char-non-deg}
Let $d\in[1,+\infty]$ and let $\big(D_t\big)=\big(\mathbb
A_{r(t)}\big)$ be a non-degenerate canonical domain system of
order~$d$. Then a function $G:\mathcal D\to\Complex$, where
$\mathcal D:=\{(z,t):\,t\ge0,\,z\in D_t\}$, is a semicomplete weak
holomorphic vector field of order~$d$ if and only if there exist
functions $p:\mathcal D\to\Complex$ and $C:[0,+\infty)\to\Real$ such
that:
\begin{mylist}
\item[(i)] $G(w,t)=w\big[iC(t)+
r'(t)p(w,t)/r(t)\big]$ for a.e. $t\ge0$ and all $w\in D_t$;
\item[(ii)] for each $w\in D:=\cup_{t\ge0} D_t$ the function $p(w,\cdot)$ is measurable in
${E_w:=\{t\ge0:\,w\in D_t\}}$;
\item[(iii)]  for each $t\ge0$ the function $p(\cdot\,,t)$ belongs to the class $\ParClass_{r(t)}$;
\item[(iv)] $C\in L^d_{\mathrm{loc}}\big([0,+\infty),\Real\big)$.
\end{mylist}
\end{result}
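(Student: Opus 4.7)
The strategy is to handle sufficiency and necessity separately. For both directions the key auxiliary fact is the boundary behaviour of the Villat kernel: a direct computation from~\eqref{EQ_Villat_kernel} yields $\Re\mathcal K_r\equiv 1$ on the inner boundary $\{|z|=r\}$ while $\Re\mathcal K_r$ reduces to a Dirac mass concentrated at $\xi=1$ on the outer boundary~$\UC$. Consequently, if $p\in\ParClass_{r}$ is represented by~\eqref{EQ_represV} with absolutely continuous densities $d\mu_i=g_i\,d\theta/(2\pi)$, then $\Re p(e^{i\phi})=g_1(\phi)\geq 0$ and $\Re p(re^{i\phi})=1-g_2(\phi)\leq 1$; moreover the Laurent-mean identity (mean of $\Re p$ on any concentric circle equals $\Re a_0$) automatically enforces $\mu_1(\UC)+\mu_2(\UC)=1$. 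This boundary characterisation of $\ParClass_r$ is the heart of the argument.

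\textbf{Sufficiency.} Given data $(C,p)$ as in (ii)--(iv), define $G$ by (i). Conditions WHVF1 and WHVF2 are immediate; for WHVF3, on any compact $K\subset\mathcal D$ the integral representation~\eqref{EQ_represV} and the normalization $\mu_1(\UC)+\mu_2(\UC)=1$ produce a uniform bound $|p(w,t)|\leq M_K$ for $(w,t)\in K$, giving $|G(w,t)|\leq M_K|w|(|C(t)|+|r'(t)/r(t)|)$ with RHS in $L^d_{\mathrm{loc}}$. Semicompleteness reduces to showing that trajectories $w(\cdot)$ of $\dot w=G(w,t)$ remain inside $\mathbb A_{r(t)}$. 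From the above boundary behaviour and $r'\leq 0$, $\frac{d}{dt}\log|w(t)|=(r'(t)/r(t))\Re p(w(t),t)\leq 0$ near $|w|=1$, preventing exit through the outer boundary, while $\frac{d}{dt}\log(|w(t)|/r(t))=(r'(t)/r(t))(\Re p(w(t),t)-1)\geq 0$ near $|w|=r(t)$, preventing exit through the inner boundary; a standard Gronwall/continuation argument closes the verification.

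\textbf{Necessity.} Given a semicomplete $G$ with its evolution family $(\varphi_{s,t})$ supplied by Theorem~\ref{TH_EFsWHVF}, define $C(t):=\Im a_0(t)$, where $a_0(t)$ is the constant Laurent coefficient of $w\mapsto G(w,t)/w$ on $\mathbb A_{r(t)}$; measurability and $L^d_{\mathrm{loc}}$-regularity of $C$ follow from WHVF1, WHVF3 and Fubini. When $r'(t)\neq 0$ set $p(w,t):=(r(t)/r'(t))[G(w,t)/w-iC(t)]$; otherwise $p(w,t):=1$. To verify $p(\cdot,t)\in\ParClass_{r(t)}$ for a.e.~$t$, fix such a $t$ outside the exceptional null-set of Theorem~\ref{TH_EFsWHVF} with $r'(t)\neq 0$, and for small $h>0$ let $\psi_h(w):=\log(\varphi_{t,t+h}(w)/w)$, which is holomorphic and single-valued on $\mathbb A_{r(t)}$ because $\varphi_{t,t+h}$ is uniformly close to the identity on compact sets. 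The inclusion $\varphi_{t,t+h}(\mathbb A_{r(t)})\subset\mathbb A_{r(t+h)}$ forces $\log(r(t+h)/|w|)\leq\Re\psi_h(w)\leq-\log|w|$, so after subtracting the imaginary Laurent mean and dividing by $\log(r(t+h)/r(t))<0$ the renormalized function $p_h$ satisfies $\Re p_h\geq 0$ as $|w|\to 1^-$ and $\Re p_h\leq 1$ as $|w|\to r(t)^+$; applying Villat's formula on shrinking sub-annuli $\{r(t)+\delta<|w|<1-\delta\}$ (where $p_h$ is continuous up to the boundary) provides a $\ParClass_{r(t)}$-representation of $p_h$ for each $h$. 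Lebesgue differentiation combined with Cauchy estimates yields $\psi_h/h\to G(\cdot,t)/w$ locally uniformly on $\mathbb A_{r(t)}$, hence $p_h\to p(\cdot,t)$ locally uniformly, and a Helly selection argument on the uniformly bounded measures $(\mu_1^{(h)},\mu_2^{(h)})$ transfers the representation to the limit. The main obstacle is orchestrating this double limit in the shrinking sub-annuli and in $h\to 0^+$ while preserving the normalization $\mu_1(\UC)+\mu_2(\UC)=1$; this is ensured by the Laurent-mean identity, which fixes the total mass at every stage.
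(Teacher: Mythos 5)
The present paper does not prove this statement; via the \texttt{result} environment it imports it verbatim from the companion paper \cite[Theorem 5.6]{SMP3}, so there is no proof here to compare against line by line. What I can do is check the internal soundness of your sketch, using the closely related estimates this paper does record.

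Your necessity half is along reasonable lines: defining $C(t)$ as the imaginary part of the constant Laurent coefficient of $G(\cdot,t)/w$, forming $\psi_h(w)=\log\bigl(\varphi_{t,t+h}(w)/w\bigr)$, extracting two-sided bounds for $\Re\psi_h$ from $\varphi_{t,t+h}(\mathbb A_{r(t)})\subset\mathbb A_{r(t+h)}$, renormalizing, and passing to a weak-$*$ limit of representing measures are the right ideas, and your ``Laurent-mean identity'' observation is exactly what pins down $\mu_1(\UC)+\mu_2(\UC)=1$. One loose end: ``apply Villat's formula on shrinking sub-annuli'' is not justified as worded, since \eqref{EQ_theVillatFormula} is attached to $\mathbb A_r$ and on $\{r+\delta<|w|<1-\delta\}$ the kernel is $\mathcal K_{r'}$ for a \emph{different} modulus $r'$; you need either a measure-valued Villat/Zmorovich representation directly on $\mathbb A_{r(t)}$, or explicit control of $r'\mapsto\mathcal K_{r'}$ in your double limit in $\delta$ and $h$.

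The sufficiency half has a genuine gap. You claim $\frac{d}{dt}\log|w(t)|=(r'/r)\,\Re p(w(t),t)\le0$ near $|w|=1$ because $\Re p\ge0$ there, and symmetrically near the inner circle. The sign condition is false in the open annulus. Write $p=p_1+p_2$ according to \eqref{EQ_represV}. Indeed $\Re p_1\ge0$ throughout $\mathbb A_r$ (boundary data $d\mu_1\ge0$ on the outer circle and the constant $\mu_1(\UC)\ge0$ on the inner circle), but $\Re p_2$ is the harmonic function with boundary data $0$ on the outer circle and the \emph{signed}, mass-zero measure $\mu_2(\UC)\frac{d\theta}{2\pi}-d\mu_2$ on the inner circle; taking $\mu_1=0$ and $\mu_2$ a point mass gives $\Re p<0$ on a region abutting $|z|=1$. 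Since $r'\le0$, this makes $|w(t)|$ \emph{increase} near the outer circle, so the barrier you invoke simply does not exist as stated. What does hold — and what is needed — is the quantitative bound $\Re p(w,t)\ge -C(r)\,(1-|w|)$ near $|z|=1$ (and a mirror bound near $|z|=r$), obtained from Laurent-series estimates on $\mathcal K_r$ of the type this paper records as \eqref{EQ_est1}--\eqref{EQ_est3} and uses in the proof of Theorem~\ref{TH_EF0}. With it, putting $u=-\log|w(t)|$ gives $\dot u\ge (r'/r)\,C\,u$ for small $u$, so Gr\"onwall yields $u(t)\ge u(s)\bigl(r(t)/r(s)\bigr)^{C}>0$ on each compact time interval, and symmetrically for the inner circle. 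So the closing phrase ``a standard Gr\"onwall/continuation argument closes the verification'' is acceptable as a conclusion, but the sign inequality on which you base it is wrong, and the estimate that actually makes the Gr\"onwall argument run is missing from your sketch.
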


\subsection{Main results}

In this paper we introduce a general notion of Loewner chain for
the doubly connected case and establish relationships between Loewner
chains and evolution families analogous to that in the Loewner
Theory for simply connected domains.

\begin{definition}\label{D_Loew_chain}
Let $d\in[1,+\infty]$ and $(D_t)$ be a canonical domain system of
order~$d$. A family $(f_t)_{t\ge0}$ of holomorphic functions
$f_t:D_t\to\Complex$ is called a {\it Loewner chain} of order $d$
(or in short an {\it $L^d$-Loewner chain}) over~$(D_t)$ if it
satisfies the following conditions:
\begin{enumerate}
\item[LC1.] each function $f_t:D_t\to\Complex$ is univalent,

\item[LC2.] $f_s(D_s)\subset f_t(D_t)$ whenever $0\leq s < t<+\infty,$

\item[LC3.] for any compact interval $I:=[S,T]\subset[0,+\infty)$ and any compact set $K\subset D_S$  there exists a
non-negative function $k_{K,I}\in L^{d}([S,T],\mathbb{R})$ such that
\[
|f_s(z)-f_t(z)|\leq\int_{s}^{t}k_{K,I}(\xi)d\xi
\]
for all $z\in K$ and all $(s,t)$ such that $S\leq s\leq t\leq T$.
\end{enumerate}
\end{definition}

The following theorem shows that every Loewner chain generates an
evolution family of the same order.

\begin{theorem}\label{TH_LC->EF}
Let $(f_{t})$ be a Loewner chain of order $d$ over a canonical
domain system $(D_{t})$ of order $d.$ If we define
\begin{equation}\label{EQ_LC->EF}
\varphi _{s,t}:=f_{t}^{-1}\circ f_{s},\qquad 0\leq s\leq t<\infty,
\end{equation}%
then $(\varphi _{s,t})$ is an evolution family of order $d$ over
$(D_{t}).$
\end{theorem}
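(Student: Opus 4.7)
First I would verify that $\varphi_{s,t}:=f_t^{-1}\circ f_s$ is a well-defined univalent holomorphic map $D_s\to D_t$; this is immediate from LC1 (which makes $f_t^{-1}$ a univalent holomorphic map on $f_t(D_t)$) combined with LC2 (which provides the inclusion $f_s(D_s)\subset f_t(D_t)$). Properties EF1 ($\varphi_{s,s}=\mathrm{id}_{D_s}$) and EF2 ($\varphi_{u,t}\circ\varphi_{s,u}=\varphi_{s,t}$) then follow by direct computation.

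The entire content of the theorem lies in EF3. Fix $I=[S,T]$ and $z\in D_S$. Combining EF2 with the defining relation $f_t\circ\varphi_{u,t}=f_u$ on $D_u$ yields the key algebraic identity
\[
f_t\bigl(\varphi_{s,u}(z)\bigr)-f_t\bigl(\varphi_{s,t}(z)\bigr)=f_t(w)-f_u(w),\qquad w:=\varphi_{s,u}(z).
\]
My plan is to bound the right-hand side via LC3 and transfer the bound to $|\varphi_{s,u}(z)-\varphi_{s,t}(z)|$ through the local bi-Lipschitz behaviour of the univalent map $f_t$ (from Koebe's distortion theorem). Concretely I would choose $\delta>0$ with $K_0:=\overline{B(z,3\delta)}\subset D_S$, apply LC3 to get $k_0\in L^d([S,T])$ controlling $|f_s-f_t|$ on $K_0$, and note that LC3 together with Weierstrass's theorem makes $|f_t'(z)|$ a continuous non-vanishing function on $I$, hence bounded below. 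Applying Koebe's distortion theorem to $f_t|_{B(z,3\delta)}$ then yields bi-Lipschitz bounds for $f_t$ on $\overline{B(z,\delta)}$ with constants $c_1,c_2>0$ uniform in $t\in I$.

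The principal obstacle is that the point $w=\varphi_{s,u}(z)$ need not lie in $\overline{B(z,\delta)}$, and in fact need not even lie in $D_S$ — only $\varphi_{s,u}(z)\in D_u$ is automatic. I would resolve this by first showing via Hurwitz together with the uniform convergence of $f_t$ on compacta provided by LC3 that $(s,u)\mapsto\varphi_{s,u}(z)$ is jointly continuous on $\Sigma:=\{(s,u):S\le s\le u\le T\}$, so that its image $\tilde K$ is a compact subset of $D_T$. Continuity of $r(\cdot)$ then produces the smallest $S^*\in[S,T]$ with $\tilde K\subset D_{S^*}$, and a compactness-plus-univalence argument (of the same flavour as the lower bound for $|f_t'(z)|$) gives bi-Lipschitz bounds for $f_t$ on $\tilde K$ uniform in $t\in[S^*,T]$. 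Applying LC3 to $K^*:=K_0\cup\tilde K\subset D_{S^*}$ on the sub-interval $[S^*,T]$, combined with the key identity, then produces the EF3 estimate for $S^*\le u\le t$; the residual range is handled by iterating the construction on the restricted Loewner chain over $[S,S^*]$, a process that terminates in finitely many steps because the nested orbits $\tilde K^{(n)}$ contract to $\{z\}\subset D_S$ as the corresponding intervals shrink. Gluing the resulting finitely many $L^d$-bounds yields the desired single function $k_{z,I}\in L^d([S,T])$.
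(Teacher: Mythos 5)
Your overall plan coincides with the route the paper takes: the same key identity $f_t\bigl(\varphi_{s,u}(z)\bigr)-f_t\bigl(\varphi_{s,t}(z)\bigr)=f_t(w)-f_u(w)$ with $w=\varphi_{s,u}(z)$, a compactness argument to keep the orbit $\{\varphi_{s,u}(z)\}$ away from both boundary circles, a normal-family bound that makes $f_t^{-1}$ locally Lipschitz uniformly in $t$, and LC3 to control the right-hand side. Where you run into trouble is the descent you use to localize the orbit in a fixed annulus $D_{S^*}$. Two concrete gaps. First, the \emph{smallest} $S^*\in[S,T]$ with $\tilde K\subset D_{S^*}$ need not exist: since $r$ is continuous and non-increasing, $\tilde K\subset D_\sigma$ iff $r(\sigma)<m:=\min_{\zeta\in\tilde K}|\zeta|$, and whenever $r(\sigma_0)=m$ exactly the set of admissible $\sigma$ is a half-open interval $(\sigma_0,T]$ with no minimum. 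Second, the claim of termination in finitely many steps is asserted (``nested orbits contract to $\{z\}$'') but not proved, and it is not automatic --- one must rule out $S^{(n)}\downarrow S'>S$, which requires comparing the minimum modulus of the shrinking orbits with $r(S')$ and arguing that $S^{(n)}$ actually attains $S$. You also leave the residual range $u<S^*<t$ slightly loose; it is fixable by a triangle split at $S^*$, but it is not just a restriction of the chain to $[S,S^*]$.

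The paper circumvents all of this. Lemma~\ref{LM_Comp} (a Rouch\'e argument) shows not merely that the orbit is a compact subset of $D_T$ but that the \emph{time-labelled} orbit $K_0:=\{(\varphi_{s,u}(z),t):S\le s\le u\le t\le T\}$ is compact as a subset of the open space-time domain $\mathcal D$; Lemma~\ref{LM_LipInv} is the normal-family Lipschitz estimate you sketch (Montel rather than Koebe, applied on $K_0$ rather than on a disk around $z$); and Lemma~\ref{LM_LC3} converts a single compact subset of $\mathcal D$ into an $L^d$-bound by one \emph{finite} cover of $K_0$ by cylinders $K_j\times[S_j,T]$ with $K_j\subset D_{S_j}$. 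The finite subcover is precisely the clean replacement for your iteration: compactness hands you finiteness for free, and because the cover is fixed once, before any estimates, one never needs a ``smallest $S^*$'' or a termination argument. If you want to repair your version, drop the inductive descent and instead extract such a finite subcover from the compactness of the time-labelled orbit in $\mathcal D$; that is essentially re-deriving Lemmas~\ref{LM_Comp} and~\ref{LM_LC3}.
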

This theorem is proved in Section~\ref{S_LC->EF}. As a consequence
we show, see Corollary~\ref{C_from_LC_to_VF}, that similarly to the
case of the unit disk, any Loewner chain over a canonical system of
annuli, satisfies a PDE driven by a semicomplete weak holomorphic
vector field.

In Section~\ref{S_from_EF_to_LC} we prove a converse of
Theorem~\ref{TH_LC->EF}, saying that for any evolution family
$(\varphi_{s,t})$ there exists a Loewner chain $(f_t)$ of the same
order such that $\eqref{EQ_LC->EF}$ holds and describe possible
conformal types of $\cup_{t\ge0} f_t(D_t)$. These results can be
formulated as follows. Denote by $I(\gamma)$ the index of the origin
w.r.t. a closed curve $\gamma\subset\C^*$. Similarly to the simply
connected case~\cite{SMP}, we will say that a Loewner chain
$(f_{t})$ over $(D_t)$ is {\it associated with} an evolution
family~$(\varphi_{s,t})$ over the same canonical domain system
if~\eqref{EQ_LC->EF} holds.

\begin{theorem}\label{Th_from_EF_to_LC}
Let $\left((D_t),(\varphi_{s,t})\right)$, where $D_{t}:=\mathbb
A_{r(t)}$ for all $t\ge0$, be an evolution family of order
$d\in[1,+\infty]$. Let $r_\infty:=\lim_{t\to+\infty} r(t)$. Then
there exists a Loewner chain $(f_t)$ of order $d$ over $(D_t)$ such
that
\begin{enumerate}
       \item $f_s=f_t\circ\varphi_{s,t}$ for all $0\leq s\leq t<+\infty$, i.e. $(f_t)$ is associated with $(\varphi_{s,t})$;
       \item $I(f_t\circ\gamma)=I(\gamma)$ for any closed curve $\gamma\subset D_{t}$ and any $t\ge0$;
       \item If $0<r_\infty<1$, then $\cup_{t\in [0,+\infty)} f_t(D_t)=\mathbb A_{r_\infty}$;
       \item If $r_\infty =0$, then $\cup_{t\in [0,+\infty)} f_t(D_t)$ is either $\D^*$, $\C\setminus \overline{\D}$, or $\C^*$.
     \end{enumerate}
If $(g_t)$ is another Loewner chain over $(D_t)$ associated with
$(\varphi_{s,t})$, then there is a biholomorphism $F:\cup_{t\in
[0,+\infty)} g_t(D_t)\to \cup_{t\in [0,+\infty)} f_t(D_t)$ such that
$f_t=F\circ g_t$ for all $t\ge0$.
\end{theorem}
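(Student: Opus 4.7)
The plan is to build the Loewner chain in two stages: first construct an abstract limit Riemann surface $X$ carrying canonical univalent maps $\pi_t\colon D_t\to X$ with $\pi_s=\pi_t\circ\varphi_{s,t}$, and then realise $X$ as a subdomain of $\mathbb C$ by uniformisation. Concretely, I would set $X:=\bigsqcup_{t\ge 0}\{t\}\times D_t/\sim$, where $(s,z)\sim(t,w)$ iff either $s\le t$ and $\varphi_{s,t}(z)=w$, or $t\le s$ and $\varphi_{t,s}(w)=z$; transitivity follows from EF2, and univalence of the $\varphi_{s,t}$ together with EF1 makes each $\pi_t\colon z\mapsto[(t,z)]$ injective. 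Equipped with the quotient topology, $X$ is connected and Hausdorff, and the $\pi_t$ form a holomorphic atlas whose transition maps are the $\varphi_{s,t}$, so $X$ is a Riemann surface with $\pi_s(D_s)\subset\pi_t(D_t)$ for $s\le t$ and $X=\bigcup_t\pi_t(D_t)$.

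Next I would identify the conformal type of $X$. For each fixed $s$, the winding number of the generator of $\pi_1(D_s)$ under $\varphi_{s,t}$ is an integer-valued continuous function of $t$ equal to $1$ at $t=s$, so every inclusion $\pi_s(D_s)\hookrightarrow\pi_t(D_t)$ is essential and $X$ is genuinely doubly connected. An exhaustion-by-moduli argument then gives $\mathrm{mod}(X)=\sup_t\mathrm{mod}(D_t)=\log(1/r_\infty)/(2\pi)$, interpreted as $+\infty$ when $r_\infty=0$. By uniformisation of doubly connected open Riemann surfaces, $X$ is biholomorphic to $\mathbb A_{r_\infty}$ when $r_\infty>0$, to $\UD^*$ when $r_\infty=0$ and $X$ is hyperbolic, or to $\mathbb C^*$ when $r_\infty=0$ and $X$ is parabolic. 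Picking a biholomorphism $\iota\colon X\to\Omega\subset\mathbb C$ and post-composing, if needed, with a rotation, a dilation, or the inversion $z\mapsto 1/z$, I can arrange that the positive generator of $\pi_1(\pi_t(D_t))$ winds once counterclockwise about $0\in\mathbb C\setminus\Omega$; this realises $\Omega$ as one of $\mathbb A_{r_\infty}$, $\UD^*$, $\mathbb C\setminus\overline{\UD}$, or $\mathbb C^*$ (the two hyperbolic embeddings corresponding to the two possible positions of the puncture) and secures condition~(2). Setting $f_t:=\iota\circ\pi_t$ makes items (1), (3), (4), LC1 and LC2 immediate.

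For the regularity property LC3, let $G$ be the weak holomorphic vector field furnished by Theorem~\ref{TH_EFsWHVF}. Differentiating $\tau\mapsto f_t(\varphi_{s,\tau}(z))$ using part~(iii) of the same theorem yields
$$
f_s(z)-f_t(z)=\int_s^t f_t'\bigl(\varphi_{s,\tau}(z)\bigr)\,G\bigl(\varphi_{s,\tau}(z),\tau\bigr)\,d\tau,\qquad z\in D_s.
$$
For compact $K\subset D_S$ and $S\le s\le t\le T$, the set $\mathcal K:=\{(\varphi_{s,\tau}(z),\tau):z\in K,\,S\le s\le\tau\le T\}$ is compact in $\mathcal D$ by joint continuity of the evolution family, so WHVF3 bounds $|G|$ there by an $L^d$-function of $\tau$. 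A Koebe distortion estimate for the univalent $f_t$ on discs of fixed radius around points of $\mathcal K$, combined with a local boundedness argument for $t\mapsto f_t(z)$ (automatic when $\Omega$ is bounded, and in the unbounded cases recovered by bootstrapping via $f_S=f_t\circ\varphi_{S,t}$ and the $L^d$-bound on $|G|$ together with the fact that $0\in\mathbb C\setminus\Omega$), supplies a uniform bound for $|f_t'|$ on $\mathcal K$. The product of the two bounds is the required $L^d$-majorant $k_{K,I}$.

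Uniqueness is then immediate: given another associated chain $(g_t)$, the maps $g_t\circ f_t^{-1}\colon f_t(D_t)\to g_t(D_t)$ are mutually compatible on overlaps thanks to the two intertwining relations with $\varphi_{s,t}$, and glue to a single biholomorphism $F$ with $F\circ f_t=g_t$. The main obstacle will be the second paragraph: justifying that the inductive limit is doubly connected with exactly the modulus predicted by $r_\infty$, and matching the trichotomy in item~(4) (puncture at $0$, puncture at $\infty$, or the parabolic $\mathbb C^*$-case) to the actual asymptotic behaviour of $r(t)$ and of the $\varphi_{s,t}$ near the degenerating inner boundary. The Koebe-based regularity estimate in the unbounded-$\Omega$ cases is the remaining technical wrinkle.
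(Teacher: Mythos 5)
Your construction of the abstract Riemann surface $X$, the identification of its conformal type via winding numbers and moduli, the normalisation by post-composing with a rotation/dilation/inversion, and the gluing argument for uniqueness all track the paper's Steps 1--4 and 6 quite closely; those parts are fine. Where you genuinely diverge is in verifying LC3, and this is where there is a gap. You want to majorise $|f_s(z)-f_t(z)|$ by bounding the integrand $f_t'\bigl(\varphi_{s,\tau}(z)\bigr)\,G\bigl(\varphi_{s,\tau}(z),\tau\bigr)$, which requires a bound on $|f_t'|$ that is uniform in $t\in[S,T]$ over the compact set obtained by projecting $\mathcal K$. You correctly observe this is immediate when $\Omega$ is bounded (then $|f_t|<1$, and Cauchy estimates finish; Koebe distortion is not actually needed), but for $\Omega=\C\setminus\overline{\D}$ or $\Omega=\C^*$ the uniform local boundedness of $\{f_t\}_{t\in[S,T]}$ is precisely what has to be proved, and the ``bootstrap'' you invoke does not give it. The relation $f_S=f_t\circ\varphi_{S,t}$ only determines $f_t$ on $\varphi_{S,t}(D_S)$, which is in general a proper subset of $D_t$ that need not contain a prescribed compact $L\subset D_T$; and omitting the single value $0$ is not enough for Montel in the strong sense, since the sequence could escape to $\infty$. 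To repair this you would have to argue, say, that $t\mapsto f_t=f_T\circ\varphi_{t,T}\in\Hol(D_T,\Complex)$ is continuous (via joint continuity of the evolution family), hence has compact range over $[S,T]$, hence is locally uniformly bounded -- a step you have not supplied.

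The paper avoids this entirely with a simple but crucial reduction (Lemma~\ref{LM_LC3a}): write
$f_t(z)-f_s(z)=f_T\bigl(\varphi_{t,T}(z)\bigr)-f_T\bigl(\varphi_{s,T}(z)\bigr)$
using only the \emph{single} function $f_T$, which is trivially Lipschitz on compacta regardless of whether $\Omega$ is bounded; then
$\varphi_{s,T}(z)=\varphi_{t,T}\bigl(\varphi_{s,t}(z)\bigr)$
together with a uniform Lipschitz bound for the normal family $\{\varphi_{t,T}\}$ (these map into $\UD$, so normality is free) and the EF3 estimate for $\varphi_{s,u}(z)-\varphi_{s,t}(z)$ produce the required $L^d$-majorant without any differentiation of $f_t$ or any control of $f_t'$ uniformly in $t$. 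You should either adopt this reduction or supply the missing continuity argument; as written your LC3 verification fails at exactly the cases you flag as ``the remaining technical wrinkle.''
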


In general, a Loewner chain associated with a given evolution family is not
unique. We call a Loewner chain $(f_t)$ to be {\it standard} if it satisfies
conditions (2)\,--\,(4) from Theorem~\ref{Th_from_EF_to_LC}. It follows from
this theorem that the standard Loewner chain $(f_t)$ associated with a given
evolution family, is defined uniquely up to a rotation (and scaling if
$\cup_{t\in [0,+\infty)} f_t(D_t)=\C^*$). Furthermore, combining
Theorems~\ref{TH_LC->EF} and~\ref{Th_from_EF_to_LC} one can easily conclude
that for any Loewner chain~$(g_t)$ of order~$d$ over a canonical domain
system~$(D_t)$ there exists a biholomorphism $F:\cup_{t\in [0,+\infty)}
g_t(D_t)\to L[(g_t)]$, where $L[(g_t)]$ is either $\D^*$, $\C\setminus
\overline{\D}$, $\C^*$, or $\mathbb A_{\rho}$ for some $\rho>0$, such that the
formula $f_t=F\circ g_t$, $t\ge0$, defines a standard Loewner chain of
order~$d$ over the canonical domain system~$(D_t)$. This motivates the
following definition.
\begin{definition}\label{D_conftyp}
Let $(g_t)$ be a Loewner chain of order $d$ over a canonical domain
system~$(D_t)=(\mathbb A_{r(t)})$. Let $r_\infty:=\lim_{t\to+\infty}
r(t)$.  We say that
\begin{itemize}
\item $(g_t)$ is {\it of (conformal) type I}, if $L[(g_t)]=\mathbb A_{\rho}$ for some $\rho>0$;
\item $(g_t)$ is {\it of (conformal) type II}, if $L[(g_t)]=\D^*$;
\item $(g_t)$ is {\it of (conformal) type III}, if $L[(g_t)]=\C\setminus \overline{\D}$;
\item $(g_t)$ is {\it of (conformal) type IV}, if $L[(g_t)]=\C^*$.
\end{itemize}
By the {(conformal) type} of an evolution family $(\varphi_{s,t})$
we mean the conformal type of any Loewner chain associated
with~$(\varphi_{s,t})$.
\end{definition}

Suppressing the notation we will also write $L[(\varphi_{s,t})]$
meaning $L[(g_t)]$, where $(g_t)$ is any Loewner chain associated
with $(\varphi_{s,t})$. We call this domain the {\it Loewner range}
of $(\varphi_{s,t})$.

\begin{remark}
The domains $\D^*$ and $\C\setminus \overline{\D}$ are conformally
equivalent. However, the conformal types II and III can be
distinguished because of the condition~(2) from
Theorem~\ref{Th_from_EF_to_LC}. Indeed, $I(G\circ\gamma)=-I(\gamma)$
for any biholomorphism $G:\D^*\to \C\setminus \overline{\D}$ and any
closed curve~$\gamma\subset \D^*$.
\end{remark}

The following statements, proved in Section~\ref{S_conf_type_EF}
characterize the conformal type of a Loewner chain via the
properties of the corresponding evolution family. Consider an
evolution family $(\varphi_{s,t})$ over a canonical domain system
$(D_t)=(\mathbb A_{r(t)})$, where $r(t)>0$ for all $t\ge0$. Denote
$r_\infty:=\lim_{t\to+\infty}r(t)$. Further for each $s\ge0$ and
$t\ge s$, define
$\tilde\varphi_{s,t}(z):=r(t)/\varphi_{s,t}(r(s)/z)$. By
\cite[Example 6.3]{SMP3}, $(\tilde \varphi_{s,t})$ is also an
evolution family over~$(D_t)$. Note that at least one of the
families $(\varphi_{0,t})$ and $(\tilde\varphi_{0,t})$ converges
to~$0$ as $t\to+\infty$ provided $r_\infty=0$ (see Lemma~\ref{LM_ML}).

\begin{theorem}\label{TH_types_nondeg}
Let $\big((D_t),(\varphi_{s,t})\big)$ be a non-degenerate evolution
family. In the above notation, the following statements hold:
\begin{itemize}
  \item[(i)] the evolution family $(\varphi_{s,t})$ is of type I if and only
  if $r_\infty>0$;
  \item[(ii)] the evolution family $(\varphi_{s,t})$ is of type II if and only if $r_\infty=0$ and $\varphi_{0,t}$  does not converge
  to 0  as  $t\to+\infty$;
  \item[(iii)] the evolution family $(\varphi_{s,t})$ is of type III if and only if $r_\infty=0$ and $\tilde{\varphi}_{0,t}$  does not converge
  to 0  as  $t\to+\infty$;
  \item[(iv)] the evolution family $(\varphi_{s,t})$ is of type IV if and only if $r_\infty=0$  and both $\varphi_{0,t}\to0$ and $\tilde{\varphi}_{0,t}\to0$
  as $t\to+\infty$.
\end{itemize}
\end{theorem}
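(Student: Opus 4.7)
The strategy is to track the behaviour of $\varphi_{0,t}=f_t^{-1}\circ f_0$ as $t\to+\infty$, where $(f_t)$ is a standard Loewner chain associated with $(\varphi_{s,t})$ via Theorem~\ref{Th_from_EF_to_LC}. Assertion~(i) is immediate: by Theorem~\ref{Th_from_EF_to_LC}(3)--(4) and Definition~\ref{D_conftyp}, the Loewner range $L[(\varphi_{s,t})]$ is a non-degenerate annulus $\mathbb A_\rho$ if and only if $r_\infty>0$. Hence in (ii)--(iv) we may assume $r_\infty=0$, so that $L\in\{\D^*,\C\setminus\overline{\D},\C^*\}$.

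A first observation halves the amount of work. With $\sigma_t(z):=r(t)/z$ and $\tau(w):=1/w$, one checks directly that $\tilde\varphi_{s,t}=\sigma_t\circ\varphi_{s,t}\circ\sigma_s$, and that whenever $(f_t)$ is a standard Loewner chain of type~II for $(\varphi_{s,t})$ the family $(\tau\circ f_t\circ\sigma_t)$ is a standard Loewner chain of type~III for $(\tilde\varphi_{s,t})$; the index-preservation condition of Theorem~\ref{Th_from_EF_to_LC}(2) is retained because both $\sigma_t$ and $\tau$ reverse the orientation with respect to the origin. Consequently, (iii) will follow from (ii) applied to $(\tilde\varphi_{s,t})$, and (iv) will follow from (ii) and (iii). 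Moreover, the algebraic identity
\[
\varphi_{0,t}(z)\,\tilde\varphi_{0,t}\bigl(r(0)/z\bigr)\,=\,r(t)\,\longrightarrow\,0
\]
shows that if $|\varphi_{0,t_n}|$ is bounded below on compacta of $D_0$ along a subsequence, then $|\tilde\varphi_{0,t_n}|\to 0$ uniformly on compacta, and conversely; this provides a consistency check with the symmetry above.

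It therefore suffices to prove that in type~II one has $\varphi_{0,t}\not\to 0$, while in type~IV one has $\varphi_{0,t}\to 0$. Since $\{\varphi_{0,t}\}$ is uniformly bounded by $1$ on $D_0$, it is a normal family; by Hurwitz, every subsequential limit is either identically zero or univalent and nonvanishing. For type~II, I would argue by contradiction: suppose $\varphi_{0,t_n}\to 0$ locally uniformly. Substituting into $f_0(z)=f_{t_n}(\varphi_{0,t_n}(z))$, the arguments $\varphi_{0,t_n}(z)$ approach the inner boundary $\{|z|=r(t_n)\}$ of $\mathbb A_{r(t_n)}$; a Carath\'eodory kernel-type argument (relying on $\mathrm{mod}(f_t(D_t))=(2\pi)^{-1}\log(1/r(t))\to+\infty$, the nesting of the complements of $f_t(D_t)$, and $\bigcup_t f_t(D_t)=\D^*$) shows that $f_t$ maps this inner boundary onto curves whose diameters shrink to~$0$, so $f_{t_n}(\varphi_{0,t_n}(z))\to 0$, contradicting $f_0(z)\neq 0$. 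For type~IV, the range $L=\C^*$ means that both boundary components of $f_t(D_t)$ escape to $\{0\}$ and $\{\infty\}$ respectively; an analogous modulus argument shows that any fixed $w_0\in L$ satisfies $f_t^{-1}(w_0)\to 0$, because the preimage of any compact $K\subset L$ stays uniformly bounded away from the outer boundary $\{|z|=1\}$ while the inner boundary of $\mathbb A_{r(t)}$ itself shrinks to the origin.

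The main obstacle is precisely this quantitative control of the boundary behaviour of the conformal equivalences $f_t:\mathbb A_{r(t)}\to f_t(D_t)$ as $t\to+\infty$: one must show, uniformly in~$t$, that the ``inner'' boundary component of $f_t(D_t)$ shrinks to the puncture of $L$ in types II and~IV, so that sequences in $D_t$ approaching $\{|z|=r(t)\}$ are mapped to sequences converging to~$0$ in~$L$. This is the technical core of the argument and calls for a Carath\'eodory-type kernel convergence theorem adapted to the doubly connected setting, used in conjunction with the divergence $\log(1/r(t))\to+\infty$ of the modulus.
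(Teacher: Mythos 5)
Your reduction from (iii) to (ii) via $\sigma_t$, $\tau$ is exactly what the paper does, and assertion~(i) is indeed immediate. The contradiction you sketch for ``type II $\Rightarrow\varphi_{0,t}\not\to 0$'' is also the right idea and is made precise in the paper by Lemma~\ref{bound_on_annulus}, which gives the explicit bound $|f_t(z)|\le\pi/\sqrt{2\log(1/|z|)}$ coming from a modulus estimate; substituting $z=\varphi_{0,t}(z_0)$ and using $f_0=f_t\circ\varphi_{0,t}$ then forces $f_0\equiv 0$, a contradiction. Note, though, that what you call a ``Carath\'eodory kernel-type argument'' is really this pointwise modulus inequality: the point $\varphi_{0,t}(z_0)$ does not lie on, nor is it asymptotic to, the inner boundary circle $\{|z|=r(t)\}$ (it simply tends to $0$, as does $r(t)$, possibly at different rates), so an argument about images of the inner boundary itself would not close the gap.

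The serious gap is in your treatment of the converse direction. To make (ii) an if-and-only-if you need ``$r_\infty=0$ and $\varphi_{0,t}\not\to 0$ $\Rightarrow$ type~II,'' and your plan is to get this by exclusion from ``type~IV $\Rightarrow\varphi_{0,t}\to 0$.'' But the modulus argument you propose for the latter does not work: knowing that preimages of compacta under $f_t$ stay uniformly bounded away from the outer circle $\{|z|=1\}$, while the inner circle $\{|z|=r(t)\}$ shrinks, does not force $f_t^{-1}(w_0)\to 0$; the preimages could just as well accumulate at an interior point of the closed disk of radius $1-\delta$. The paper avoids this trap entirely by arguing in the other direction: if $\varphi_{0,t}\not\to 0$, then by Lemma~\ref{LM_ML}(ii)--(iii) one can extract a subsequence $\varphi_{s,t_n}\to\varphi_{s,\infty}$ converging, for every $s$, to a \emph{univalent} limit in $\mathbb M(r(s),0)$; Lemma~\ref{LM_LC3a} shows that $(\varphi_{t,\infty})$ is itself a Loewner chain associated with $(\varphi_{s,t})$, with range $\Omega\subset\D^*$ \emph{bounded} (so the Loewner range is not $\C^*$), and an index/argument-principle computation then rules out $\C\setminus\overline\D$, leaving $\D^*$. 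This construction of a Loewner chain from subsequential limits is the part of the proof you are missing, and it does not follow from either exclusion or a modulus estimate.

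Two smaller inaccuracies. First, in your ``consistency check'' you assert that if $|\varphi_{0,t_n}|$ is bounded below on compacta then $|\tilde\varphi_{0,t_n}|\to 0$, ``and conversely'': the converse is false, since in type~IV both $\varphi_{0,t}\to 0$ and $\tilde\varphi_{0,t}\to 0$; the identity $\varphi_{0,t}(z)\,\tilde\varphi_{0,t}(r(0)/z)=r(t)$ only shows they cannot both stay bounded below. Second, you need a sharper dichotomy than the identity alone provides: the fact that $\varphi_{0,t}\not\to 0$ along \emph{some} subsequence already implies $\varphi_{0,t}\not\to 0$ in the strong sense that $|\varphi_{0,t}(z)|$ is bounded uniformly away from $0$ and $1$ for all $t$ (Lemma~\ref{LM_ML}(ii)--(iii)); without invoking that dichotomy the exclusion argument at the end does not assemble correctly.
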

\noindent In the mixed-type or degenerate case the situation is
simpler. Namely, we prove following
\begin{proposition}\label{TH_types_mixed}
Let $(\varphi_{s,t})$ be an evolution family over a canonical domain
system $(D_t)=(\mathbb A_{r(t)})$. Assume that $r(T)=0$ for some
$T\in[0,+\infty)$, i.e. $(D_t)$ is of mixed-type or degenerate. Then
$(\varphi_{s,t})$ is of type IV if $\varphi_{0,t}\to0$  as
$t\to+\infty$, and of type II otherwise.
\end{proposition}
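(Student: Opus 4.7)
The plan is to analyse a standard Loewner chain $(f_t)$ associated with the given evolution family, exploiting the simple form $D_t=\UD^*$ that holds for every $t\ge T$. Since $t\mapsto\omega(r(t))$ is non-increasing and $r(T)=0$, one has $r(t)=0$ for all $t\ge T$; in particular $r_\infty=0$, so by Theorem~\ref{Th_from_EF_to_LC} and Definition~\ref{D_conftyp} the Loewner range $L:=L[(\varphi_{s,t})]$ is one of $\UD^*$, $\C\setminus\overline\UD$, $\C^*$, corresponding respectively to types II, III, IV.

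The first step is to exclude type III and describe $L$ as a punctured simply connected domain. For each $t\ge T$ the univalent function $f_t\colon\UD^*\to\C$ has either a removable singularity or a simple pole at $0$. A pole would give $I(f_t\circ\gamma)=-1$ for a positively oriented small circle $\gamma$ around $0$, violating property~(2) of Theorem~\ref{Th_from_EF_to_LC}; hence $f_t$ extends to a univalent $\tilde f_t\in\Hol(\UD,\C)$, and the same index computation applied to $\tilde f_t\circ\gamma$ forces $\tilde f_t(0)=0$. Writing $U_t:=\tilde f_t(\UD)$, condition LC2 yields $U_s\subset U_t$ for $T\le s\le t$, and $U:=\bigcup_{t\ge T}U_t$ is a simply connected plane domain containing $0$. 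Since $L=U\setminus\{0\}$ while $(\C\setminus\overline\UD)\cup\{0\}$ is not open, type III is impossible; by the Riemann Mapping Theorem, $(\varphi_{s,t})$ is of type IV iff $U=\C$ and of type II otherwise.

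The second step distinguishes the remaining two types through the asymptotic behaviour of $\varphi_{0,t}$. From $f_0=f_t\circ\varphi_{0,t}$ and $\varphi_{0,t}(D_0)\subset\UD^*\subset\UD$ one gets, for $t\ge T$,
$$
\varphi_{0,t}(z)=\tilde f_t^{-1}\bigl(f_0(z)\bigr),\qquad z\in D_0,
$$
with $f_0(z)\ne0$ because $f_0(D_0)\subset L\subset\C^*$. In type IV one has $U=\C$; applying the Schwarz lemma to $\tilde f_t^{-1}\circ\tilde f_s\colon\UD\to\UD$ for $s\le t$ shows $|\tilde f_t'(0)|$ is non-decreasing and would, if bounded, produce by Carath\'eodory kernel convergence a univalent map $\hat f\colon\UD\to\C$ with $\hat f(\UD)=U=\C$, which is impossible; thus $|\tilde f_t'(0)|\to+\infty$, and Montel together with Liouville applied to the inverse family $\tilde f_t^{-1}$ on compact subsets of $\C$ forces any locally uniform subsequential limit to vanish identically, whence $\varphi_{0,t}(z)\to 0$. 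In type II, fix a Riemann map $h\colon U\to\UD$ with $h(0)=0$: the compositions $h\circ\tilde f_t\colon\UD\to\UD$ are univalent self-maps fixing $0$ whose images $h(U_t)$ increase to $\UD$, so every locally uniform subsequential limit is a rotation $z\mapsto e^{i\theta}z$; passing to the inverses shows that every subsequential limit of $\varphi_{0,t}(z)$ equals $e^{-i\theta}h(f_0(z))\ne 0$, and hence $\varphi_{0,t}\not\to 0$.

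The main obstacle is converting the purely geometric information $U_s\subset U_t$ into convergence statements for the inverse maps $\tilde f_t^{-1}$. The bookkeeping combines the Schwarz lemma for $\tilde f_t^{-1}\circ\tilde f_s$ (giving monotonicity of $|\tilde f_t'(0)|$), Koebe-type distortion bounds (controlling how fast the images $U_t$ grow), and the Carath\'eodory kernel convergence theorem (identifying subsequential limits of $\tilde f_t$ and of $h\circ\tilde f_t$), so as to yield in one stroke both the type IV convergence $\varphi_{0,t}\to 0$ and the type II non-convergence.
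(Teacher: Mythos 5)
Your proof is correct in substance, but it takes a genuinely different route from the paper's. The paper first replaces $(\varphi_{s,t})$ by the time-shifted family $(\varphi_{T+s,T+t})$, which has the same Loewner range and lives over the degenerate system $(D_{T+t})$, and then delegates everything to already-proven results: \cite[Proposition~5.15]{SMP3} shows that setting $\phi_{s,t}(0):=0$ produces an evolution family on the full unit disk, and \cite[Theorem~1.6]{SMP} supplies both the dichotomy (a disk centred at $0$, or $\C$) and the analytic criterion $\phi_{0,t}'(0)\to 0$, which translates directly into $\varphi_{0,t}\to 0$. You instead work on the side of the Loewner chain: for $t\geq T$ you extend the univalent $f_t\colon\UD^*\to\C^*$ through the puncture using the index normalization (2) to exclude a pole and to force $\tilde f_t(0)=0$, identify $L[(\varphi_{s,t})]$ with $U\setminus\{0\}$ for the increasing simply connected union $U=\bigcup_{t\ge T}\tilde f_t(\UD)$, exclude type III since $(\C\setminus\overline\UD)\cup\{0\}$ is not open, and finally run a Schwarz-lemma/conformal-radius/kernel-convergence argument on the $\tilde f_t$ to read off the dichotomy from the behaviour of $\varphi_{0,t}=\tilde f_t^{-1}\circ f_0$. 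Your route avoids invoking the unit-disk Loewner machinery of \cite{SMP} and makes the geometric content (the chain extends across the puncture and fills out a simply connected domain) explicit, at the price of redoing the Carath\'eodory-type convergence bookkeeping that \cite[Theorem~1.6]{SMP} packages; the paper's version is shorter because it leans on that result. Two places you pass over quickly: (a) the claim that bounded $|\tilde f_t'(0)|$ yields a schlicht map onto $\C$ is more cleanly phrased via monotonicity of conformal radius under inclusion, as the Carath\'eodory kernel theorem needs a normalization that you haven't fixed; and (b) in the Montel/Liouville step the functions $\tilde f_t^{-1}$ live on expanding domains, so one should make the diagonal extraction explicit before concluding a subsequential limit is an entire bounded function. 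Neither is a gap, only a spot where a written-out proof would need a sentence more.
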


Further new results of the present paper are as follows. As we
mentioned in Section~\ref{SS_prel} each evolution family is
generated by a weak holomorphic vector field. So it is possible to
study the limit behavior of an evolution family using the
corresponding vector fields. In this way we obtain a necessary and
sufficient condition for a non-degenerate evolution family
$(\varphi_{s,t})$ to satisfy the condition~$\varphi_{s,t}\to0$ as
$t\to+\infty$, see Theorem~\ref{TH_EF0} in Section~\ref{S_convzero}.

In~\cite{SMP3} we obtained an explicit characterization of
semicomplete weak holomorphic vector fields over {\it
non-degenerate} canonical domain systems. As an application of
general Loewner Theory in the unit disk we also obtained
in~\cite{SMP3} an analogous result for {\it degenerate} canonical
domain systems. In this paper we include Section~\ref{S_mixed-type}
devoted to obtaining a characterization of semicomplete weak
holomorphic vector fields over canonical domain systems of {\it
mixed type}.

Finally, in the short Section~\ref{S_conf-type-VF} we combine the
above results to obtain the conformal classification of Loewner
chains, in doubly connected setting, via the corresponding weak
holomorphic vector fields.

\section{From Loewner chains to evolution families}\label{S_LC->EF}

In this Section we prove Theorem~\ref{TH_LC->EF}. The proof is based on the
following lemmas. In what follows, using the notation $[a,b]$, we allow  $a$ to
be equal to~$b$. In such case $[a,b]$ means the singleton $\{a\}$.

\begin{lemma}\label{LM_LipInv}
Let $\big(D_t\big)=\big(\mathbb A_{r(t)}\big)$ be a canonical domain system and
$(f_t)$ a Loewner chain over $(D_t)$. Then for any compact set
$K\subset\mathcal D:=\{(z,t):~t\ge0,\,z\in D_t\}$ there exists $M=M(K)>0$ such
that
\begin{equation*}
|\zeta-z|\le M|f_t(\zeta)-f_t(z)|\quad\text{whenever $(z,t),(\zeta,t)\in K$.}
\end{equation*}
\end{lemma}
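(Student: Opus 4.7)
The plan is to argue by contradiction, leaning on compactness of $K$, the local uniform continuity in $t$ that LC3 delivers, and the univalence property LC1. First I note that $\mathcal{D}$ is relatively open in $\Complex\times[0,+\infty)$: since $\omega\circ r$ is absolutely continuous and $\omega\colon[0,1)\to[0,+\infty)$ is a homeomorphism, the radius function $r$ is continuous, so $(z,t)\mapsto|z|-r(t)$ is continuous. Monotonicity of $\omega\circ r$ also gives $D_s\subset D_t$ whenever $s\le t$. These facts let us extract compact neighborhoods that stay inside $D_s$ for $s$ in a small interval around any given $t^*$.

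Suppose the inequality fails for every $M>0$. Then there exist $(z_n,t_n),(\zeta_n,t_n)\in K$ with
\[
\frac{|\zeta_n-z_n|}{|f_{t_n}(\zeta_n)-f_{t_n}(z_n)|}\longrightarrow+\infty .
\]
By compactness of $K$, pass to a subsequence so that $t_n\to t^*$, $z_n\to z^*$, $\zeta_n\to\zeta^*$, with the limit pair in $K$; in particular $z^*,\zeta^*\in D_{t^*}$. The workhorse is the following local consequence of LC3: given any compact set $V\subset D_{t^*}$, continuity of $r$ yields $\delta>0$ such that $V\subset D_s$ for every $s\in I:=[\max\{0,t^*-\delta\},t^*+\delta]$; applying LC3 with this $I$ and this $V$ then gives uniform convergence $f_t\to f_{t^*}$ on $V$ as $t\to t^*$, and hence, by Weierstrass' theorem, uniform convergence $f_t'\to f_{t^*}'$ on every compact subset of the interior of $V$.

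\textbf{Case $z^*\ne\zeta^*$.} Choose $V$ to be the union of two disjoint compact neighborhoods of $z^*$ and $\zeta^*$ inside $D_{t^*}$. For $n$ large, $f_{t_n}(z_n)\to f_{t^*}(z^*)$ and $f_{t_n}(\zeta_n)\to f_{t^*}(\zeta^*)$, and by LC1 the two limits are distinct. Thus $|f_{t_n}(\zeta_n)-f_{t_n}(z_n)|$ stays bounded away from $0$ while $|\zeta_n-z_n|$ stays bounded, giving a bounded ratio---contradiction.

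\textbf{Case $z^*=\zeta^*$.} Choose $V$ to be a closed disk $\overline{B(z^*,2\epsilon)}\subset D_{t^*}$. For all large $n$ the segment from $z_n$ to $\zeta_n$ lies in $\overline{B(z^*,\epsilon)}$, and the fundamental theorem of calculus gives
\[
\frac{f_{t_n}(\zeta_n)-f_{t_n}(z_n)}{\zeta_n-z_n}=\int_0^1 f_{t_n}'\bigl(z_n+s(\zeta_n-z_n)\bigr)\,ds\;\longrightarrow\;f_{t^*}'(z^*)
\]
by uniform convergence $f_t'\to f_{t^*}'$ on $\overline{B(z^*,\epsilon)}$. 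Univalence of $f_{t^*}$ forces $f_{t^*}'(z^*)\ne0$, so the reciprocal ratio stays bounded---contradiction. The case $z^*=\zeta^*$ is the main obstacle: pointwise closeness of the two image values is not by itself incompatible with the failure of the estimate, and one must invoke uniform convergence of the derivatives together with the nonvanishing of $f_{t^*}'(z^*)$ coming from univalence.
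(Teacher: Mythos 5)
Your argument is correct, and it takes a genuinely different route from the paper's. The paper works on the image side: after passing to convergent subsequences, it constructs (using continuity of $r$, LC3, a Carath\'eodory-kernel-type observation, and LC2) a fixed domain $W$ on which all the inverse maps $g_n:=f_{t_n}^{-1}$ are defined, notes that $\{g_n\}$ is normal since $g_n(W)\subset\mathbb D$, and extracts a uniform Lipschitz bound for the $g_n$ on a compact subset of $W$ that contradicts the assumption in one stroke, with no case distinction. You instead work on the preimage side and split cases by whether the limit points coincide: when $z^*\ne\zeta^*$ you invoke LC1 to get a positive lower bound on the denominator; when $z^*=\zeta^*$ you use LC3 to get local uniform convergence $f_t\to f_{t^*}$, pass to derivatives via Weierstrass' theorem, feed this into the integral form of the difference quotient, and finish with the nonvanishing of $f_{t^*}'(z^*)$ coming from univalence. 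Your version is more elementary and only uses LC1 and LC3 (the paper's argument also uses LC2 to see that $W$ is contained in each $f_{t_n}(D_{t_n})$), at the cost of a case split; the paper's version is more uniform but needs the normal-family and kernel-type machinery. You correctly identify $z^*=\zeta^*$ as the essential case and treat it properly.
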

\begin{proof}
Assume the contrary. Then there exist sequences $(\zeta_n)$, $(z_n)$
and $(t_n)$ such that $(\zeta_n,t_n),(z_n,t_n)\in K$ and
\begin{equation}\label{EQ_assume1}
|\zeta_n-z_n|>n|f_{t_{n}}(\zeta_n)-f_{t_{n}}(z_n)|
\end{equation}%
for every $n\in \mathbb{N}$.

By the compactness of $K$ we may assume that the sequences
$(\zeta_n)$, $(z_n)$, and $(t_n)$ converge to some $\zeta_0$, $z_0$,
and $t_0$, respectively. Clearly, $\zeta_0,z_0\in D_{t_0}$, because
$(\zeta_0,t_0)$ and $(z_0,t_0)$ belong to $K$. Using continuity of
$[0,+\infty)\ni t\mapsto r(t)$ we therefore conclude that there
exist $n_1\in\Natural$ and $\tau_1\in[0,t_0]$ such that
$K_1:=\{\zeta_n,\,z_n:\,n>n_1\}\cup\{\zeta_0,z_0\}$ is a compact
subset of $D_{\tau_1}$ and $t_n\ge\tau_1$ for all $n>n_1$.

Now we note that by LC3, $f_{t}\to f_{t_0}$ uniformly on compact
subsets of $U:=D_{\tau_1}$ as $t\to t_0$, $t\ge\tau_1$. In
particular it follows that $f_{t_n}(\zeta_n)\to f_{t_0}(\zeta_0)$
and $f_{t_n}(z_n)\to f_{t_0}(z_0)$ as $n\to+\infty$, $n>n_1$.
Moreover, any compact set $B\subset f_{t_0}(U)$ is also contained in
$f_t(U)$ if $t\ge\tau_1$ and $|t-t_0|$ is small enough\footnote{This
statement can be easily obtained in the same way as in the proof of
the Carath\'eodory kernel convergence theorem (see, e.g., \cite[\S
II.5, Theorem\,1]{Goluzin}) or \cite[p.\,29,
Theorem\,1.8]{Pommerenke}).}. Hence we conclude that there exist
$n_2>n_1$, $n_2\in\Natural$, and $\tau_2\in[\tau_1,t_0]$ such that
$K_2:=\{f_{t_n}(\zeta_n),\,f_{t_n}(z_n):\,n>n_2\}\cup\{f_{t_0}(\zeta_0),f_{t_0}(z_0)\}$
is a compact subset of $W:=f_{\tau_2}(U)$ and $t_n\ge\tau_2$ for all
$n>n_2$.

According to the definition of a Loewner chain over a doubly
connected canonical domain system, the functions
$g_n:=\big(f_{t_n}^{-1}\big)|_W$ are well-defined and holomorphic in
$W$ for all $n>n_2$. Moreover, $g_n(W)\subset\UD$ for any $n>n_2$.
Hence the family $\mathcal F:=\{g_n:\,n>n_2\}$ is normal in~$W$ and
its closure in $\Hol(W,\Complex)$ is compact. Therefore, there
exists $M'=M'(K_2,\mathcal F)>0$ such that
$$
|g_n(w_2)-g_n(w_1)|\le M'|w_2-w_1|\quad\text{for any $w_1,w_2\in
K_2$ and any $n>n_2$}.
$$
Choosing $w_1:=f_{t_n}(z_n)$ and $w_2:=f_{t_n}(\zeta_n)$ we see that
the above inequality contradicts~\eqref{EQ_assume1} for large
$n\in\Natural$. This contradiction completes the proof.
\end{proof}

\begin{lemma}\label{LM_Comp}
Under the conditions of Lemma~\ref{LM_LipInv}, let $K$ be a compact subset of
$\mathcal U:=\{(w,t):\,t\ge0,\,w\in f_t(D_t)\}$. Then $\hat
K:=\big\{\big(f_t^{-1}(w),t\big):\,(w,t)\in K\big\}$ is a compact subset of
$\mathcal D$.
\end{lemma}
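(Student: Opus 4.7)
The target is sequential compactness of $\hat K$; the inclusion $\hat K\subset\mathcal D$ is built into the definition. Take $(z_n,t_n)\in\hat K$ coming from $(w_n,t_n)\in K$ with $z_n=f_{t_n}^{-1}(w_n)$. Compactness of $K$ lets me pass to a subsequence with $(w_n,t_n)\to(w_0,t_0)\in K$, so that $w_0\in f_{t_0}(D_{t_0})$; set $z_0:=f_{t_0}^{-1}(w_0)\in D_{t_0}$. Everything then reduces to showing $z_n\to z_0$.

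The plan is a Rouch\'e comparison on a small disk around $z_0$. For any $\delta>0$ with $\overline{B(z_0,\delta)}\subset D_{t_0}$, the continuity of $t\mapsto r(t)$ (inherited from $t\mapsto\omega(r(t))\in AC^d$) lets me pick $\eta>0$ such that $K':=\overline{B(z_0,\delta)}\subset D_t$ for every $t$ in $I:=[t_0-\eta,t_0+\eta]\cap[0,+\infty)$. Then each $f_t$, $t\in I$, is defined on $K'$, and LC3 applied with $K'$ and $I$ yields $f_{t_n}\to f_{t_0}$ uniformly on $K'$. Univalence of $f_{t_0}$ on $D_{t_0}$ gives $|f_{t_0}(z)-w_0|\ge m>0$ on $\partial B(z_0,\delta)$, so for $n$ large
\[
\bigl|(f_{t_n}(z)-w_n)-(f_{t_0}(z)-w_0)\bigr|\le|f_{t_n}(z)-f_{t_0}(z)|+|w_n-w_0|<m\le|f_{t_0}(z)-w_0|
\]
on $\partial B(z_0,\delta)$. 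Rouch\'e's theorem forces $f_{t_n}-w_n$ and $f_{t_0}-w_0$ to have the same number of zeros in $B(z_0,\delta)$, namely $1$. Univalence of $f_{t_n}$ on $D_{t_n}\supset B(z_0,\delta)$ identifies that unique zero with $z_n$, so $z_n\in B(z_0,\delta)$ for all sufficiently large~$n$. Since $\delta$ was arbitrary, $z_n\to z_0$, finishing the proof.

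The only delicate point I anticipate is the choice of $K'$: it must sit inside $D_t$ for every $t$ near $t_0$, simultaneously from above and from below. The inclusion between $D_t$ and $D_{t_0}$ flips with the sign of $t-t_0$, so without continuity of $r$ at $t_0$ a subsequence of $t_n$ could avoid the side on which one has direct information about $f_{t_n}$. Once this uniform compact disk is found, the rest is a standard Rouch\'e argument that is oblivious to the direction from which $t_n$ approaches $t_0$. Note that Lemma~\ref{LM_LipInv} plays no role in the present proof; it is the tool needed in the subsequent proof of Theorem~\ref{TH_LC->EF}, after compactness of $\hat K$ has been secured.
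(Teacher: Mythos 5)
Your proposal is correct and follows essentially the same route as the paper: extract a convergent subsequence using compactness of $K$, place a fixed compact disk $\overline{B(z_0,\delta)}$ inside all $D_t$ for $t$ near $t_0$ using continuity of $r(\cdot)$, invoke LC3 for uniform convergence $f_{t_n}\to f_{t_0}$ on that disk, and conclude $z_n\to z_0$ by Rouch\'e. The only cosmetic difference is that the paper applies Rouch\'e to the pair $f_{t_n}-w_n$ and $f_{t_0}-w_n$ (and therefore needs to observe separately that $w_n\in f_{t_0}(B_\varepsilon\setminus C_\varepsilon)$), whereas you compare against the fixed function $f_{t_0}-w_0$ and absorb $|w_n-w_0|$ in the triangle inequality; both are valid and lead to the same conclusion.
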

\begin{proof}
Consider an arbitrary sequence $\big((z_n,t_n)\big)\subset \hat K$. We need to
prove that it has a subsequence converging to a point in $K$. To this end write
$w_n:=f_{t_n}(z_n)$ for any $n\in\Natural$. According to the compactness of
$K$, passing if necessary to a subsequence we may assume that $(t_n)$ converges
to some $t_0$ and $(w_n)$ converges to some $w_0\in f_{t_0}(D_{t_0})$.

It is sufficient to show that $z_n\to z_0:=f_{t_0}^{-1}(w_0)$ as $n\to+\infty$.
Indeed, in this case $\big((z_n,t_n)\big)$ converges to ${(z_0,t_0)\in\hat K}$.

To prove that $z_n\to z_0$ as $n\to+\infty$ we fix $\varepsilon>0$
small enough. Denote $B_\varepsilon:=\{z:|z-z_0|\le \varepsilon\}$,
$C_\varepsilon:=\{z:|z-z_0|= \varepsilon\}$. From the continuity of
$t\mapsto r(t)$ and from the fact that $w_n\to w_0$ as
$n\to+\infty$, it follows that there exists $n_0\in\Natural$ such
that $B_\varepsilon\subset D_{t_n}$ and $w_n\in
f_{t_0}(B_\varepsilon\setminus C_\varepsilon)$ for all $n>n_0$. Let
$S:=\min\{t_n:\,n>n_0\}\cup\{t_0\}$,
$T:=\max\{t_n:\,n>n_0\}\cup\{t_0\}$. Then from LC3  it follows that
$f_{t_n}\to f_{t_0}$ uniformly on $B_\varepsilon$ as $n\to+\infty$,
$n>n_0$. Hence there exists $n_1>n_0$ such that
$$
\max_{z\in C_\varepsilon}|f_{t_n}(z)-f_{t_0}(z)|<\min_{z\in
C_\varepsilon}|f_{t_0}(z)-w_0|-|w_n-w_0|
$$
for all $n>n_1$. Note that
$|f_{t_0}(z)-w_0|-|w_n-w_0|\le|f_{t_0}(z)-w_n|$.

Recall that by the construction, the open disk
$B_\varepsilon\setminus C_\varepsilon$ contains the unique solution
of $f_{t_0}(z)-w_n=0$ provided $n>n_0$. Then by the Rouche theorem
for the functions $f_{t_n}-w_n$ and $f_{t_0}-w_n$, for each $n>n_1$,
the unique solution to $f_{t_n}(z)-w_n=0$, which is $z=z_n$, belongs
to $B_\varepsilon\setminus C_\varepsilon$. Therefore,
$|z_n-z_0|<\varepsilon$ for all $n>n_1$. Since $\varepsilon>0$ was
chosen arbitrarily, this shows that $z_n\to z_0$ as $n\to+\infty$
and hence the proof is complete.
\end{proof}

\begin{lemma}\label{LM_LC3}
Under the conditions of Lemma~\ref{LM_LipInv}, let $K$ be a compact subset of
$\mathcal D$ and $$E:=\big[\min_{(z,t)\in K} t,\max_{(z,t)\in K} t\big].$$ Then
there exists a non-negative function $k_{K}\in L^d(E,\Real)$ such that
$$
|f_t(z)-f_u(z)|\le \int_{u}^{t}k_K(\xi)\,d\xi
$$
for any $z\in\Complex$ and any $u,t\ge0$ satisfying $(z,u), (z,t)\in K$ and
$u\le t$.
\end{lemma}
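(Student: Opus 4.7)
The plan is to reduce the statement to finitely many applications of LC3 by covering a suitable enlargement of~$K$ by open ``boxes'' $B_j^\circ\times I_j^\circ$ satisfying the hypotheses of LC3, and then to telescope via a Lebesgue-number argument. The subtlety is that if one partitions $[u,t]$ by intermediate points $u=s_0<s_1<\cdots<s_m=t$, the points $(z,s_i)$ need not lie in~$K$ even when the endpoints do, so a cover of $K$ itself is not sufficient for the chaining.

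To fix this, I would first introduce the \emph{vertical envelope}
\[
K^\ast:=\{(z,s)\in\mathcal D:\exists\,(z,u_0),(z,t_0)\in K\text{ with }u_0\le s\le t_0\}.
\]
Since $t\mapsto r(t)$ is non-increasing, $D_{u_0}\subset D_s$ for $s\ge u_0$, so $K^\ast$ is indeed a subset of~$\mathcal D$. A routine diagonal extraction, starting from the precompactness of the corresponding sequences in $K$, shows that $K^\ast$ is compact in~$\mathcal D$.

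Next, for each $(z_0,t_0)\in K^\ast$, continuity of $t\mapsto r(t)$ together with $|z_0|>r(t_0)$ allows me to choose $\rho_0,\delta_0>0$ so that $\overline{B(z_0,\rho_0)}\subset D_{a_0}$, where $a_0:=\max\{0,t_0-\delta_0\}$; by monotonicity of~$r$ one then has $\overline{B(z_0,\rho_0)}\subset D_t$ for every $t\in I_0:=[a_0,t_0+\delta_0]$. This gives an open neighborhood of $(z_0,t_0)$ in $\mathcal D$ of the product form required by LC3. Compactness of $K^\ast$ yields a finite subcover $\{B_j^\circ\times I_j^\circ\}_{j=1}^N$, and for each $j$ LC3 produces a non-negative $k_j\in L^d(I_j,\mathbb R)$ realizing the desired bound on $B_j\times I_j$. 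Setting
\[
k_K(\xi):=\sum_{j=1}^N k_j(\xi)\mathbf{1}_{I_j}(\xi),\qquad\xi\in E,
\]
gives an element of $L^d(E,\mathbb R)$.

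To conclude, let $\delta>0$ be a Lebesgue number for this open cover of~$K^\ast$. Given $(z,u),(z,t)\in K$ with $u\le t$, partition $[u,t]$ into pieces $u=s_0<\cdots<s_m=t$ of length less than~$\delta$. By the very definition of $K^\ast$ each $(z,s_i)$ lies in~$K^\ast$; the two-point set $\{(z,s_i),(z,s_{i+1})\}$ has $\mathcal D$-diameter below $\delta$, hence is contained in a single box $B_{j(i)}^\circ\times I_{j(i)}^\circ$. Consequently $z\in B_{j(i)}$ and $[s_i,s_{i+1}]\subset I_{j(i)}$, so LC3 yields
\[
|f_{s_{i+1}}(z)-f_{s_i}(z)|\le\int_{s_i}^{s_{i+1}}k_{j(i)}(\xi)\,d\xi\le\int_{s_i}^{s_{i+1}}k_K(\xi)\,d\xi,
\]
and summing over~$i$ together with the triangle inequality gives the claimed estimate. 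The main obstacle I foresee is identifying the need for the enlargement $K^\ast$ and verifying that it remains a compact subset of~$\mathcal D$; once this is in place, the remainder is a standard Lebesgue-number / partition argument.
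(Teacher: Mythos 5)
Your proof is correct, but it takes a genuinely different route from the paper's. The paper sidesteps both the vertical envelope $K^*$ and the Lebesgue-number telescoping with a single trick: it chooses all covering boxes to be of the form $K_j\times I_j$ with $I_j=[S_j,T]$, where $T:=1+\max_{(z,t)\in K}t$ is one common right endpoint. This is legitimate precisely because $t\mapsto r(t)$ is non-increasing, so ${\overline{B(\zeta_j,\rho_j)}\subset D_{S_j}}$ already forces $\overline{B(\zeta_j,\rho_j)}\subset D_t$ for all $t\ge S_j$, and hence $K_j\times[S_j,T]\subset\mathcal D$. With that choice, whenever $(z,u)\in K_j\times I_j$ and $u\le t\le\max_{(z,t)\in K}t<T$, the point $(z,t)$ automatically lies in the \emph{same} box, so one application of LC3 handles the pair without any chaining; the paper then simply sets $k_K:=\sum_j\chi_{I_j}k_j$. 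Your argument instead uses small local boxes, which forces you to (a) introduce $K^*$ so that the intermediate partition points $(z,s_i)$ land in a compact set where the boxes cover, and (b) run a Lebesgue-number/telescoping argument. Both proofs use the monotonicity of $r$ (you, to extend a box forward to $t_0+\delta_0$; the paper, to extend it all the way to $T$), so neither is strictly more general here; the paper's version is shorter and avoids the mild technicalities surrounding the compactness of $K^*$ and the interior of intervals at the left endpoint $0$. Your version has the merit of being the ``generic'' chaining argument that one would also reach for when no globally convenient $T$ is available.
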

\begin{proof}
Since $t\mapsto r(t)$ is continuous,  for any $(\zeta,s)\in K$ there
exists $\rho>0$ and $\varepsilon>0$ such that $\{z:\,|z-\zeta|\le
\rho\}\subset D_S$, where $S:=\max\{0,s-\varepsilon\}$. It follows
that
$$K_{(\zeta,s)}:=\{z:\,|z-\zeta|\le \rho\}\times\big[
\max\{0,s-\varepsilon\},T\big]\subset \mathcal D,\quad
T:=1+\max_{(z,t)\in K} t,$$ for any $(\zeta,s)\in K$.

Let $U_{(\zeta,s)}$ stand for the interior of $K_{(\zeta,s)}$ w.r.t.
$\Complex\times[0,+\infty)$. Then
$$
K\subset\bigcup_{(\zeta,s)\in K}U_{(\zeta,s)}.
$$
Therefore, by the compactness of $K$, there exist finite sequences
$\zeta_1,\ldots,\zeta_n\in\Complex$, $S_1,\ldots, S_n\in[0,T]$, and
$\rho_1,\ldots,\rho_n>0$ such that
$$
 K\subset\bigcup_{j=1}^n K_j\times I_j,
$$
where $K_j:=\{z:|z-\zeta_j|\le \rho_j\}\subset D_{S_j}$, and
$I_j:=[S_j,T]\subset[0,+\infty)$ for all $j=1,\ldots,n$.

Then by LC3, there exist non-negative functions $k_j:=k_{K_j,I_j}\in
L^d(I_j,\Real)$ such that for each $j=1,\ldots,n$ and any $z\in K_j$,
$$
|f_t(z)-f_u(z)|\le \int_{u}^t k_j(\xi)\,d\xi
$$
whenever $u,t\in I_j$ and $u\le t$.

Finally, we notice that by construction, for arbitrary $z\in\Complex$ and
$u,t\ge0$ satisfying $(z,u), (z,t)\in K$,  there exists $j=1,\ldots,n$ such
that $z\in K_j$ and $u,t\in I_j$. Thus the statement of the lemma holds with
$$
k_K:=\sum_{j=1}^n \chi_{I_j} k_j,
$$
where $\chi_{I_j}$ stands for the characteristic function of the set $I_j$.
This completes the proof.
\end{proof}

\begin{proof}[{\bf Proof of Theorem~\ref{TH_LC->EF}}] It is straightforward to
check that for any $s\ge0$ and $t\ge s$, formula~\eqref{EQ_LC->EF} defines  a
holomorphic mapping $\varphi_{s,t}:D_{s}\to D_{t}$ and that the family
$(\varphi_{s,t})$ satisfies conditions EF1 and EF2.

To prove EF3, fix $[S,T]\subset [0,+\infty)$ and $z\in D_S$. From LC2 and LC3
it follows that, the set $K:=\{(f_s(z),t):\,S\le s\le t\le T\}$ is a compact
subset of $\mathcal U$ (as a continuous image of a compact set). Then by
Lemma~\ref{LM_Comp},
$$
\hat K:=\big\{\big(f_t^{-1}(f_s(z)),t\big):\,S\le s\le t\le
T\big\}=\big\{\big(\varphi_{s,t}(z),t\big):\,S\le s\le t\le T\big\}
$$
is a compact subset of~$\mathcal D$.

Consider the continuous mapping $G:\mathcal D\times[0,1]\to\mathcal
D$ defined by
$$G:\big((\zeta,u),\lambda\big)\mapsto \big(\zeta,(1-\lambda)u+\lambda T\big).$$ Since
$\hat K\times[0,1]$ is compact, the set
$$
K_0:=G(\hat K)=\big\{\big(\varphi_{s,u}(z),t\big):\,S\le s\le u\le
t\le T\big\}
$$
is again a compact subset of $\mathcal D$. Clearly, $\hat K\subset
K_0$.

Apply now Lemma~\ref{LM_LipInv} with $K_0$ substituted for $K$. Then
there exists $M>0$ such that
\begin{equation*}
\big|\varphi_{s,t}(z)-\varphi_{s,u}(z)\big|\le M
\big|f_t\big(\varphi_{s,t}(z)\big)-f_t\big(\varphi_{s,u}(z)\big)\big|=
M\big|f_u\big(\varphi_{s,u}(z)\big)-f_t\big(\varphi_{s,u}(z)\big)\big|
\end{equation*}
whenever $S\le s\le u\le t\le T$. Since for any such $s$, $u$, and
$t$, the points $\big(\varphi_{s,u}(z),u\big)$ and
$\big(\varphi_{s,u}(z),t\big)$ belong to $K_0$, applying
Lemma~\ref{LM_LC3} with $K_0$ substituted for~$K$ completes the
proof.
\end{proof}

We conclude this section with a corollary relating Loewner chains
with PDEs.

\begin{corollary}\label{C_from_LC_to_VF}
Let $d\in[1,+\infty]$. Let $(D_t)$ be a canonical domain system of
order~$d$ and $(f_{t})$ a Loewner chain of order $d\in[1,+\infty]$
over $(D_t)$. Then the following statements hold:

\begin{enumerate}
\item[(i)] There exists a null-set $N\subset [0,+\infty)$ (not depending on $z$)
such that for every $s\in [0,+\infty)\setminus N$ the function
\begin{equation*}
z\in D_s\mapsto \frac{\partial f_{s}(z)}{\partial s}
:=\lim_{h\rightarrow 0}\frac{f_{s+h}(z)-f_{s}(z)}{h}\in \C
\end{equation*}
is a well-defined holomorphic function on $D_s$.

\item[(ii)] There exists an essentially unique weak holomorphic vector field $G$ of order $d$ over~$(D_t)$
such that for a.e. $s\in [0,+\infty)$,
\begin{equation}\label{EQ_GK-PDE}
\frac{\partial f_{s}(z)}{\partial s}=-G(z,s)f_{s}^{\prime
}(z)\quad~\text{for all $z\in D_s$}.
\end{equation}
The evolution family $(\varphi_{s,t})$ of the Loewner chain~$(f_t)$
solves for every fixed $s\ge0$ and $z\in D_s$ the ODE
$$\frac{d\varphi_{s,t}(z)}{dt}=G\big(\varphi_{s,t}(z),t\big),\quad \text{a.e. $t\ge s$}.$$
\end{enumerate}
\end{corollary}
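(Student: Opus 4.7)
The proof assembles Theorem~\ref{TH_LC->EF} with Theorem~\ref{TH_EFsWHVF}(A). First I would apply Theorem~\ref{TH_LC->EF} to obtain the evolution family $(\varphi_{s,t}):=f_t^{-1}\circ f_s$ of order $d$ over~$(D_t)$, and then Theorem~\ref{TH_EFsWHVF}(A) to extract an essentially unique semicomplete weak holomorphic vector field $G$ of order $d$, together with a null set $N_0\subset[0,+\infty)$, such that, for every $s\ge0$, $t\mapsto\varphi_{s,t}\in\Hol(D_s,\Complex)$ is locally absolutely continuous, differentiable outside $N_0$, and satisfies $d\varphi_{s,t}/dt=G(\cdot,t)\circ\varphi_{s,t}$. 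This already yields the ODE assertion in~(ii) and, combined with the fact that $(\varphi_{s,t})$ is uniquely determined by~$(f_t)$, the essential uniqueness of $G$.

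For part~(i) the plan is to show that, for each $S\ge0$, the curve $t\mapsto f_t\in\Hol(D_S,\Complex)$ is locally absolutely continuous in the sense of~\cite[Def.~2.7]{SMP3}. Condition LC3 supplies, for every compact $K\subset D_S$ and every $[S,T]\subset[0,+\infty)$, an estimate $\sup_{z\in K}|f_s(z)-f_t(z)|\le\int_s^t k_{K,[S,T]}(\xi)\,d\xi$ with $k_{K,[S,T]}\in L^d([S,T],\Real)$; a Cauchy-formula argument on slightly interior compacts transfers this into $L^d$ control of the difference quotients in the topology of $\Hol(D_S,\Complex)$. Then \cite[Def.~2.8]{SMP3} produces a null set $N_S\supset N_0$ such that $\partial f_s/\partial s$ exists as a holomorphic function on $D_S$ for every $s\in[0,+\infty)\setminus N_S$. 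Taking $N:=\bigcup_{S\in\mathbb Q\cap[0,+\infty)}N_S$ (again a null set) and using that the $D_S$ exhaust $D_s$ as $S\uparrow s$ (up to switching to smaller annuli), one obtains the required $z$-independent exceptional set.

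For the PDE in~(ii), fix $s\in[0,+\infty)\setminus N$ and $z\in D_s$. The defining identity $f_s(z)=f_t(\varphi_{s,t}(z))$ gives, for small $h>0$,
\begin{equation*}
\frac{f_{s+h}(z)-f_s(z)}{h}\;=\;-\,\frac{f_{s+h}(\varphi_{s,s+h}(z))-f_{s+h}(z)}{h}.
\end{equation*}
The left-hand side tends to $\partial f_s(z)/\partial s$ by~(i). On the right, the representation $\varphi_{s,s+h}(z)-z=\int_s^{s+h}G(\varphi_{s,\xi}(z),\xi)\,d\xi$ coming from Theorem~\ref{TH_EFsWHVF}(A), combined with the uniform convergence $f_{s+h}'\to f_s'$ on compacts (a consequence of~(i)) and the local boundedness of $G(\cdot,\xi)$, yields the limit $-f_s'(z)G(z,s)$. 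Both sides being holomorphic in $z\in D_s$, the identity $\partial f_s/\partial s=-G(\cdot,s)f_s'$ extends to all of~$D_s$. Measurability in $s$ and the WHVF bounds are inherited from~$G$, which was produced by Theorem~\ref{TH_EFsWHVF}(A).

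The main obstacle is step~(i): promoting the pointwise LC3 estimate into local absolute continuity of $t\mapsto f_t$ in the Fr\'echet space $\Hol(D_S,\Complex)$, with an exceptional set independent of~$z$. This runs largely parallel to the corresponding step in the proof of Theorem~\ref{TH_EFsWHVF}(A) in~\cite{SMP3}, but must be checked carefully because, unlike the evolution-family case where $|\varphi_{s,t}|\le 1$, Loewner chains are not uniformly bounded on $D_t$, so the Cauchy estimates have to be applied on strictly interior subdomains and patched together through an exhaustion.
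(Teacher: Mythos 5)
Your proposal is correct and follows essentially the same route as the paper intends: the paper omits the proof, pointing to the analogous argument for the unit disk in~\cite[Theorem~4.1(1)]{SMP}, and that argument proceeds exactly as you do—first convert to an evolution family via Theorem~\ref{TH_LC->EF}, extract the vector field and ODE from Theorem~\ref{TH_EFsWHVF}(A), and then derive the PDE by differentiating the chain identity $f_s=f_t\circ\varphi_{s,t}$, using LC3 and Cauchy estimates to control $f_t'$ and the integral representation of $\varphi_{s,t}$ to control the inner difference. The minor points you flag (patching through an exhaustion since $f_t$ is unbounded, and getting a $z$-independent exceptional set) are genuine but handled exactly as you sketch; note also that the left-sided limit $h\to0^-$ should be treated via $f_{s+h}=f_s\circ\varphi_{s+h,s}$ rather than your displayed identity, which only applies for $h>0$.
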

Essential uniqueness means here that any two vector fields
satisfying~\eqref{EQ_GK-PDE} can differ only for values of~$s$
forming a null-set on the real line.

The proof of Corollary~\ref{C_from_LC_to_VF} is very similar to that
of \cite[Theorem~4.1(1)]{SMP}, so we omit it. We call the vector
field $G$ in the second statement the {\it vector field associated
with} the Loewner chain~$(f_t)$.

\section{From evolution families to Loewner chains}\label{S_from_EF_to_LC} This section is devoted to the proof
of~Theorem~\ref{Th_from_EF_to_LC} establishing the existence, in the
doubly connected setting, of a standard Loewner chain of order~$d$
associated with a given evolution family of order $d$.

Important role in our discussion is played by the class
$\classM(r_1,r_2)$ of all functions $\psi\in\Hol(\mathbb
A_{r_1},\mathbb A_{r_2})$, $1>r_1\ge r_2\ge 0$, such that
$I(\psi\circ\gamma)=I(\gamma)$ for every closed curve
$\gamma\subset\mathbb A_{r_1}$.

We will make use of the following two lemmas.

\begin{lemma}[{\cite[Lemma 4.7]{SMP3}}]\label{LM_evol-fam-classM}
Suppose $\big((D_t),(\varphi_{s,t})\big)$ is an evolution family of
order $d\in[1,+\infty]$. Let $s\ge 0$. Then the following statements
are true:
\begin{itemize}
\item[(i)] for each $z\in D_s$ the function $t\mapsto \varphi_{s,t}(z)$ belongs to
$AC^d\big([s,+\infty),\Complex\big)$;
\item[(ii)] the mapping $t\mapsto\varphi_{s,t}\in \Hol(\mathbb A_{r(s)},\mathbb D^*)$ is continuous in $[s,+\infty)$;
\item[(iii)] $\varphi_{s,t}\in \classM\big(r(s),r(t)\big)$ for any $t\ge s$;
\item[(iv)] $\varphi_{s,t}$ is univalent in $D_s$ for any $t\ge s$.
\end{itemize}
\end{lemma}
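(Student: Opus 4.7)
The plan is to establish (i)--(iv) in order, each part feeding the next. Part (i) is immediate from axiom EF3: fixing $s\ge 0$ and $z\in D_s$, for any $T>s$ the bound
$|\varphi_{s,u}(z)-\varphi_{s,t}(z)|\le \int_u^t k_{z,[s,T]}(\xi)\,d\xi$ with $k_{z,[s,T]}\in L^d([s,T],\R)$ is precisely the definition of $t\mapsto \varphi_{s,t}(z)$ being locally absolutely continuous on $[s,+\infty)$ with derivative in $L^d_{\mathrm{loc}}$.

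For (ii), pointwise continuity already follows from (i). To upgrade this to continuity in the compact-open topology on $\Hol(\mathbb A_{r(s)},\D^*)$, I would observe that $\{\varphi_{s,t}\}_{t\ge s}$ is uniformly bounded (every map lands in $\D$), hence normal by Montel's theorem. For any $t_n\to t_0$, every subsequential limit in $\Hol(\mathbb A_{r(s)},\C)$ must agree with $\varphi_{s,t_0}$ pointwise by (i); uniqueness of the subsequential limit forces $\varphi_{s,t}\to \varphi_{s,t_0}$ in $\Hol(\mathbb A_{r(s)},\C)$ as $t\to t_0$.

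For (iii), fix a closed curve $\gamma$ in $\mathbb A_{r(s)}$ and consider $H:[s,t]\times[0,1]\to\C^*$ defined by $H(u,\theta):=\varphi_{s,u}(\gamma(\theta))$. Joint continuity of $H$ follows from (ii) together with the uniform continuity of $\gamma$, and $H$ stays inside $\C^*$ because $\varphi_{s,u}(D_s)\subset D_u\subset \D^*$. This exhibits $\gamma$ and $\varphi_{s,t}\circ\gamma$ as freely homotopic closed curves in $\C^*$, so they have the same index about the origin.

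Part (iv) is the main obstacle, since invoking the ODE representation of Theorem~\ref{TH_EFsWHVF} would be circular at this stage. The strategy is a Hurwitz-type continuation argument. First, by (ii), $\varphi_{u,v}\to \id_{D_u}$ in $\Hol(D_u,\C)$ as $v\searrow u$; since $\id_{D_u}$ is nonconstant and univalent, Hurwitz's theorem yields $\delta(u)>0$ such that $\varphi_{u,v}$ is univalent on $D_u$ for every $v\in[u,u+\delta(u)]$. Set $\sigma:=\sup\{v\ge s:\varphi_{s,v}\text{ is univalent on }D_s\}$; by the previous step, $\sigma>s$. For any sequence $v_n\nearrow \sigma$ with $\varphi_{s,v_n}$ univalent, (ii) gives $\varphi_{s,v_n}\to \varphi_{s,\sigma}$ uniformly on compact subsets of $\mathbb A_{r(s)}$, and $\varphi_{s,\sigma}$ is nonconstant because by (iii) it maps a circle around the inner boundary of $\mathbb A_{r(s)}$ to a closed curve of index $1$ about the origin. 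Hurwitz then forces $\varphi_{s,\sigma}$ to be univalent, so the supremum is attained. Finally, if $\sigma<+\infty$, combining the first step at $u=\sigma$ with axiom EF2 makes $\varphi_{s,v}=\varphi_{\sigma,v}\circ \varphi_{s,\sigma}$ univalent for all $v\in[\sigma,\sigma+\delta(\sigma)]$, contradicting the definition of $\sigma$. Hence $\sigma=+\infty$, which proves (iv).
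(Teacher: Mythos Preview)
This lemma is quoted from \cite[Lemma~4.7]{SMP3} and the present paper gives no proof of it, so there is nothing here to compare your argument against. Your treatments of (i)--(iii) are correct and essentially self-contained.

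Your argument for (iv), however, has a genuine gap. You invoke Hurwitz's theorem twice. The second use is legitimate: if the univalent maps $\varphi_{s,v_n}$ converge locally uniformly to a nonconstant limit $\varphi_{s,\sigma}$, then the limit is univalent. The first use is not: from $\varphi_{u,v}\to\id_{D_u}$ locally uniformly you conclude that $\varphi_{u,v}$ is univalent on \emph{all} of $D_u$ for $v$ close to $u$. Hurwitz only yields injectivity on each fixed compact subset of $D_u$, not on the whole open domain. For a general sequence of holomorphic maps this global conclusion is simply false: on $\UD$ the maps $f_n(z)=z+\tfrac{2}{n}z^{\,n}$ converge locally uniformly to the identity, yet $f_n'(z)=1+2z^{\,n-1}$ vanishes at $|z|=2^{-1/(n-1)}\in(0,1)$, so no $f_n$ is univalent on~$\UD$. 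The same defect reappears in your final step, where you again need $\varphi_{\sigma,v}$ to be univalent on all of $D_\sigma$ for $v$ near~$\sigma$; without that, the continuation past $\sigma$ does not go through.

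To close the gap one has to exploit the extra structure available here --- that each $\varphi_{u,v}$ maps $\mathbb A_{r(u)}$ into $\mathbb A_{r(v)}$ and, by your part~(iii), lies in $\classM\big(r(u),r(v)\big)$ --- in order to control pairs $a_n\neq b_n$ with $\varphi_{u,v_n}(a_n)=\varphi_{u,v_n}(b_n)$ that might escape to~$\partial D_u$. That boundary control is precisely where the substance of the argument in \cite{SMP3} lies; a bare appeal to Hurwitz does not supply it.
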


\begin{lemma}\label{LM_LC3a}
Let $\big((D_t),(\varphi_{s,t})\big)$ be an evolution family of
order $d\in[1,+\infty]$. Let $(f_t)_{t\ge0}$ be a family of
univalent functions $f_t:D_t\to\Complex$. If
$f_s=f_t\circ\varphi_{s,t}$ for any $s\ge0$ and any $t\ge s$, then
$(f_t)$ is a Loewner chain of order~$d$ associated with
$\big((D_t),(\varphi_{s,t})\big)$.
\end{lemma}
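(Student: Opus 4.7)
Conditions LC1 and LC2 are immediate: LC1 is part of the hypothesis, while LC2 follows from $f_s(D_s)=f_t(\varphi_{s,t}(D_s))\subset f_t(D_t)$ for $0\le s\le t$, because $\varphi_{s,t}$ sends $D_s$ into $D_t$. The substance is LC3. Fix $I=[S,T]\subset[0,+\infty)$ and a compact $K\subset D_S$. Monotonicity of $r(\cdot)$ yields $D_S\subset D_t$ and $\dist(K,\partial D_t)\ge\delta:=\dist(K,\partial D_S)>0$ for every $t\in I$. Applying the hypothesis with the arguments $(t,T)$ in place of $(s,t)$ gives the ``terminal'' factorization $f_t=f_T\circ\varphi_{t,T}$, which, combined with EF2, yields
\[
f_s(z)-f_t(z)=f_T\bigl(\varphi_{s,T}(z)\bigr)-f_T\bigl(\varphi_{t,T}(z)\bigr)=f_T\bigl(\varphi_{t,T}(\varphi_{s,t}(z))\bigr)-f_T\bigl(\varphi_{t,T}(z)\bigr).
\]

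The plan is to control the right-hand side by combining Cauchy's estimate with a uniform version of EF3. First I would show that $A:=\{\varphi_{u,T}(z):u\in I,\ z\in K\}$ is relatively compact in $D_T$. Since each $\varphi_{u,T}$ is bounded by $1$ on $D_s\supset D_S$, the family $\{\varphi_{u,T}|_{D_S}\}_{u\in I}$ is normal, and Cauchy's estimate produces the uniform bound $|\varphi_{u,T}'|\le 1/\delta$ on $K$. Pointwise continuity of $u\mapsto\varphi_{u,T}(z)$ follows from the cocycle identity $\varphi_{u_0,T}(z)=\varphi_{u,T}(\varphi_{u_0,u}(z))$ together with the EF3 estimate $|\varphi_{u_0,u}(z)-z|\le\int_{u_0}^u k_{z,I}(\xi)\,d\xi$ and the Cauchy Lipschitz bound. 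Normality upgrades this to joint continuity on $I\times D_S$, so $A$ is a compact subset of $D_T$. I then fix a compact neighborhood $A^*\Subset D_T$ of $\overline{A}$ and set $M:=\sup_{A^*}|f_T'|$.

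A compactness argument parallel to Lemma~\ref{LM_LC3} upgrades EF3 to a uniform-in-$z$ estimate: there is a non-negative $\tilde k\in L^d(I,\mathbb{R})$ with $|\varphi_{s,t}(z)-z|\le\int_s^t\tilde k(\xi)\,d\xi$ for all $z\in K$ and all $S\le s\le t\le T$. Combining this with the Cauchy-based bound $|\varphi_{s,T}(z)-\varphi_{t,T}(z)|=|\varphi_{t,T}(\varphi_{s,t}(z))-\varphi_{t,T}(z)|\le\delta^{-1}|\varphi_{s,t}(z)-z|$ and the $M$-Lipschitz estimate for $f_T$ between the points $\varphi_{s,T}(z),\varphi_{t,T}(z)\in A^*$ produces $|f_s(z)-f_t(z)|\le(M/\delta)\int_s^t\tilde k(\xi)\,d\xi$, establishing LC3 with $k_{K,I}:=(M/\delta)\tilde k$. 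The main obstacle I anticipate is that $D_t$ is an annulus rather than a convex region, so the Cauchy/mean-value estimates cannot be applied naively along line segments joining $z$ to $\varphi_{s,t}(z)$ (or $\varphi_{s,T}(z)$ to $\varphi_{t,T}(z)$). I would circumvent this by partitioning $[S,T]$ into finitely many subintervals short enough that on each one $\varphi_{s,t}(z)$ remains in the disk $B(z,\delta/2)\subset D_t$ for every $z\in K$, and chaining the local Lipschitz bounds via the triangle inequality; the partition can be chosen independent of $z\in K$ thanks to the uniform $\tilde k$.
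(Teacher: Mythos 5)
Your proposal is correct in substance and uses the same core decomposition as the paper: conditions LC1 and LC2 are immediate, and LC3 is proved by factoring
\[
f_s(z)-f_t(z)=f_T\bigl(\varphi_{s,T}(z)\bigr)-f_T\bigl(\varphi_{t,T}(z)\bigr)
=f_T\bigl(\varphi_{t,T}(\varphi_{s,t}(z))\bigr)-f_T\bigl(\varphi_{t,T}(z)\bigr),
\]
combining a uniform $L^d$-bound coming from EF3 (the paper cites \cite[Proposition~4.5]{SMP3}; you reprove it) with Lipschitz bounds for $f_T$ and for $\varphi_{\cdot,T}$ on appropriate compacta. Where the routes genuinely diverge is in how those Lipschitz bounds are established. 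The paper proves the inequality $|\varphi_{t,T}(w)-\varphi_{t,T}(z)|\le C'|w-z|$ on the compact set $\tilde K$ by contradiction: one extracts convergent subsequences $z_n,w_n\to z_0$, so the offending pair collapses to a single point and a normal-family Cauchy estimate on a small disk around $z_0$ gives the contradiction. Because everything localizes to a disk, the annulus's non-convexity simply never arises. You instead take the Cauchy estimate $|\varphi'_{u,T}|\le 1/\delta$ on $K$ head-on and then confront the non-convexity directly, repairing it by partitioning $[S,T]$ into subintervals on which $\varphi_{s,t}(z)$ stays in $B(z,\delta/2)$ (possible thanks to the uniform integrable bound $\tilde k$) and chaining the local estimates. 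That partition-and-chain device works, but it needs a couple of repairs you don't quite state: the relevant compact image set must contain $\bigcup_{u\in I}\varphi_{u,T}(K')$ with $K':=\{w:\dist(w,K)\le\delta/2\}$, not merely $\overline A=\overline{\bigcup_{u}\varphi_{u,T}(K)}$, since the image curves needed for the Lipschitz bound of $f_T$ come from the whole $\delta/2$-neighborhood of $K$; and the Cauchy constant over $B(z,\delta/2)$ becomes $2/\delta$ rather than $1/\delta$. These are minor quantitative slips, not gaps, and the final bound $|f_s(z)-f_t(z)|\le {\rm const}\int_s^t\tilde k(\xi)\,d\xi$ with the constant depending only on $K$ and $I$ indeed delivers LC3. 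In short: both proofs go through $f_T$ and the uniform EF3 estimate; the paper's contradiction-and-normal-families step is tidier because it never needs to track where a joining arc lies, whereas your explicit Cauchy-plus-chaining variant is more hands-on and forces you to enlarge the target compactum slightly.
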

\begin{proof}
We follow ideas of
the 4th step in the proof of \cite[Theorem 4.5]{ABHK}. Conditions
LC1 and LC2 in Definition~\ref{D_Loew_chain} follow easily from the
condition of the lemma: $f_s(D_s)=f_t(\varphi_{s,t}(D_s))\subset
f_t(D_t)$ for any $s\ge0$ and any $t\ge s$. So we only need to prove
LC3.

First of all, we note that by \cite[Theorem 5.1(A)]{SMP3} there
exists a semicomplete weak holomorphic vector field
$G:\{(z,t):\,t\ge0,~z\in D_t\}\to\Complex$ such that for any $s\ge0$
and $z\in D_s$, the function $[s,+\infty)\ni t\mapsto
w^*_s(z,\cdot)$ solves the initial value problem $\dot w=G(w,t)$,
$w(s)=z$. It follows (see, e.g., \cite[Theorem 2.3(v)]{SMP3}) that
$\varphi_{s,t}(z)$ is jointly continuous in $z$, $s$, and $t$. Fix
arbitrary $I:=[S,T]\subset[0,+\infty)$ and a compact set $K\subset
D_S$. Then
$$\hat K:=\bigcup_{S\le s\le t\le T}\varphi_{s,t}(K)\quad\text{and}
\quad\tilde K:=\big\{\big(\varphi_{s,t}(z),t\big):\,z\in K,~ S\le
s\le t\le T\big\}$$ are compact sets in $D_T$ and in $\{(z,t):\,t\in
I,~z\in D_t\}$, respectively.

Since $f_T$ is holomorphic in $D_T$, there exists $C=C(\hat K,T)$
such that
\begin{equation}\label{EQ_C1}
|f_T(w)-f_T(z)|\le C|w-z|\quad\text{for any $z,w\in \hat K$.}
\end{equation}

We claim that there exists $C'=C'(\tilde K)>0$ such that
\begin{equation}\label{EQ_C2}
|\varphi_{t,T}(w)-\varphi_{t,T}(z)|\le C'|w-z|\quad\text{whenever
$(z,t),(w,t)\in\tilde K$.}
\end{equation}
Let us assume the contrary. Then there exist sequences $(z_n)$,
$(w_n)$ and $(t_n)$ such that $(z_n,t_n),(w_n,t_n)\in\tilde K$ and
\begin{equation}\label{EQ_assume}
|\varphi_{t_n,T}(w_n)-\varphi_{t_n,T}(z_n)|>n|w_n-z_n|\quad
\text{for all $n\in\Natural$}.
\end{equation}
By the compactness of $\tilde K$ we may assume that the sequences
$(z_n)$, $(w_n)$ and $(t_n)$ converge to some $z_0$, $w_0$ and
$t_0$, respectively. Clearly, $t_0\in I$ and $z_0,w_0\in D_{t_0}$.
Moreover, the left-hand side of~\eqref{EQ_assume} is bounded and
consequently $w_0=z_0$. Since, by the definition of a canonical
domain system, $D_t=\mathbb A_{r(t)}$, where $r:[0,+\infty)\to[0,1)$
is continuous, there exists $n_0\in\Natural$ and $\tau\in I$ such
that $t_n\in[\tau,T]$ for all $n>n_0$ and $X:=\{z_n:n>
n_0\}\cup\{w_n:n> n_0\}\cup\{z_0\}$ is a compact subset of $D_\tau$.
Since $\varphi_{t,T}(D_\tau)\subset\UD$ for any $t\in[\tau,T]$, the
family $\{\varphi_{t,T}:t\in[\tau,T]\}$ is normal in $D_\tau$ and
consequently there exists $M:=M(X,\tau,T)>0$ such that
$$
|\varphi_{t,T}(w)-\varphi_{t,T}(z)|\le M |w-z|\quad\text{for any
$z,w\in X$ and $t\in[\tau,T]$.}
$$
The latter contradicts~\eqref{EQ_assume}. This proves~\eqref{EQ_C2}.

Further, by \cite[Proposition 4.5]{SMP3} there exists non-negative
function $k_{K,I}\in L^d\big(I,R\big)$ such that
\begin{equation}\label{EQ_C3}
|\varphi_{s,t}(z)-\varphi_{s,u}(z)|\le\int_u^t k_{K,I}(\xi)d\xi
\end{equation}
for any $z\in K$ and all $s,u,t\in I$ satisfying $s\le u\le t$.

Thus from~\eqref{EQ_C1}, \eqref{EQ_C2} and \eqref{EQ_C3} we deduce
that for any $s\in I$, $t\in[s,T]$, and $z\in K$,
\begin{eqnarray*}
|f_t(z)-f_s(z)|&=&|f_T(\varphi_{s,T}(z))-f_T(\varphi_{t,T}(z))|\\
& \le &
C|\varphi_{s,T}(z)-\varphi_{t,T}(z)|=C|\varphi_{t,T}(\varphi_{s,t}(z))-\varphi_{t,T}(z)|\vphantom{\int_s^t
k_{K,I}(\xi)d\xi}\\
&\le &
CC'|\varphi_{s,t}(z)-z|=CC'|\varphi_{s,t}(z)-\varphi_{s,s}(z)|\le
CC'\int_s^t k_{K,I}(\xi)d\xi.
\end{eqnarray*}
This completes the proof.
\end{proof}

Now we recall some basic properties of the module of a doubly
connected domain. Given any path-connected topological space $X$, we
denote by $\Pi_1(X)$ its fundamental group. By the {\it base point}
of a closed curve $\gamma:[0,1]\to X$ we mean the point
$\gamma(0)=\gamma(1)$. Let $G\subset\Complex$ be a doubly connected
domain. A closed curve $\gamma\subset G$ is {\it homotopically
nontrivial in $G$} if $\gamma$ is not homotopic in $G$ to its base
point, i.e. if the equivalence class $[\gamma]_{\Pi(G)}$ is not the
neutral element of $\Pi_1(G)$. It is known (see, e.g., \cite[Chapter
1.D, Example 3]{Ahlfors}) that for every doubly connected domain
$G\subset\Complex$ there exists a quantity $M(G)\in (0,+\infty]$
called the {\it module\footnote{Denote by $\Gamma$ the set of all closed rectifiable curves $\gamma\subset G$ that are homotopically nontrivial in~$G$. By definition, the {\it module} of $G$ is the reciprocal of the extremal length of $\Gamma$.} of $G$} having the following properties:

\begin{itemize}
        \item[M1.] The module is invariant under conformal mappings, i.e. if $G_2=f(G_1)$ and $f$ is a conformal mapping of $G_1$, then
        $M(G_2)=M(G_1)$.
        \item[M2.] If $G_1\subset G_2$ and any closed curve in $G_1$ homotopically nontrivial in $G_1$ is also
        homotopically nontrivial in $G_2$,
        then $M(G_1)\leq M(G_2)$.
        \item[M3.] We have $M(\D^*)=M(\C^*)=+\infty$.
        \item[M4.] If $0<r_1 <r_2$, then $M(\mathbb A_{r_1,r_2})=\frac{1}{2\pi}\log\left({r_2}/{r_1}\right)$.
\end{itemize}

Finally, we will make use of the following remark without explicit
reference.

\begin{remark}
Let $\gamma_j$, $j=1,2$, be closed curves in $G:=\mathbb
A_{r_1,r_2}$, $0\le r_1<r_2\le+\infty$. The curve $\gamma_1$ is
homotopic to $\gamma_2$ if and only if $I(\gamma_1)=I(\gamma_2)$.
\end{remark}

\begin{proof}[\bf Proof of Theorem~\ref{Th_from_EF_to_LC}.]

We divide the proof into several steps.

\noindent{\bf Step 1.} {\it There exists a Riemann surface $N$ and a
family of mappings $(g_t:D_t\to N)$ such that
\begin{itemize}
  \item[(i)] $g_t$ is univalent for any $t\ge0$;
  \item[(ii)] $g_s(D_s)\subset g_t(D_t)$ whenever $0\leq s<t<+\infty$;
  \item[(iii)] $N=\cup_{t\ge0}g_t(D_t)$;
  \item[(iv)] $g_s=g_t\circ \varphi_{s,t}$ whenever $0\leq s<t<+\infty$.
\end{itemize}}

This statement can be easily established if one follows the proof of
\cite[Theorem 4.5]{ABHK}, bearing in mind that in our case the
domains of the functions $\varphi_{s,t}$ depend on $s$. Therefore we
omit here the proof.

\noindent{\bf Step 2.} {\it The surface $N$ is doubly connected}.

\noindent{\underline{Step 2a.}} {\it If, for some $s\ge0$, a closed
curve $\gamma\subset N_s:=g_s(D_s)$ is homotopically nontrivial in
$N_s$, then it is also homotopically nontrivial in $N$ and in $N_t$
for all $t\ge s$.}

Indeed, for any $t\ge s$, a closed curve $\gamma\subset N_s$ is
homotopically non-trivial in $N_t$ if and only if
$I(g_t^{-1}\circ\gamma)\neq0$. According to the property (iv) above
and Lemma~\ref{LM_evol-fam-classM}\,(iii),
\begin{equation}\label{EQ_Igamma}
I(g_t^{-1}\circ\gamma)=I(\varphi_{s,t}\circ g_s^{-1}\circ\gamma)=
I(g_s^{-1}\circ\gamma).
\end{equation}
Therefore, if $\gamma:[0,1]\to N_s$, $\gamma(0)=\gamma(1)$, is
homotopically nontrivial in $N_s$, then it is also homotopically
nontrivial in $N_t$ provided $t\ge s$. Suppose that, at the same
time, $\gamma$ is homotopically trivial in $N$. Then there exists a
homotopy $H:[0,1]\times[0,1]\to N$ such that $H(0,\cdot)=\gamma$ and
$H(1,\cdot)\equiv \const$. In view of (ii) and (iii), by the
compactness of $H([0,1]\times[0,1])$ we have
$H([0,1]\times[0,1])\subset N_t$ for all $t\ge0$ large enough and
hence $\gamma$ is homotopically trivial in $N_t$ for at least one
$t\ge s$. This contradicts the statement we have just proved. Thus
$\gamma$ must be also homotopically nontrivial in $N$.

\noindent{\underline{Step 2b.}} {\it The fundamental group
$\Pi_1(N)$ is not trivial.}

Fix $z_0\in D_0$. The fundamental groups $\Pi_1(N_t)$, $t\ge0$, and
$\Pi_1(N)$ can be realized as groups of equivalence classes
$[\gamma]_{\Pi_1(N_t)}$ (respectively, $[\gamma]_{\Pi_1(N)}$) of
closed curves with the base point at $g_0(z_0)$. Note that since
each surface $N_t:=g_t(D_t)$ is doubly connected, the fundamental
group $\Pi_1(N_t)$ is isomorphic to $\Z$ for any $t\geq 0$.

Consider a closed curve $\alpha:[0,1]\to D_{0}$ with the base point
at $z_0$ such that $I(\alpha)=1$. Then the equivalence class
$[\gamma_0]_{\Pi_1(N_0)}$ of $\gamma_0:=g_0\circ \alpha$ generates
the fundamental group $\Pi_1(N_0)$. By Step 2a, it follows, in
particular, that $\gamma_0$ is not homotopically trivial in $N$, so
the fundamental group $\Pi_1(N)$ is not trivial.

\noindent{\underline{Step 2c.}} {\it The fundamental group
$\Pi_1(N)$ is generated by one element.}

By \eqref{EQ_Igamma} with $\gamma:=\gamma_{0}$ and $s:=0$, we have
$I(g_t^{-1}\circ\gamma_0)=1$ for all $t\ge0$. Thus the equivalence
class $[\gamma_0]_{\Pi_1(N_t)}$ of $\gamma_0$ also generates the
fundamental group $\Pi_1(N_t)$.

We claim that $[\gamma_0]_{\Pi_1(N)}$ generates  the fundamental
group $\Pi_1(N)$. Indeed, take any closed curve $\gamma:[0,1]\to N$
with the base point at $g_0(z_0)$. Combining (ii), (iii) and the
compactness of $\gamma([0,1])$, we see that there exists $t\ge0$
such that $\gamma([0,1])\subset N_t$. Therefore, there must exists
$n\in\Z$ such that
$[\gamma]_{\Pi_1(N_t)}=\big([\gamma_0]_{\Pi_1(N_t)}\big)^n$. But any
homotopy in $N_t$ is also a homotopy in $N$ and this implies that
$[\gamma]_{\Pi_1(N)}$ belongs to the subgroup generated by
$[\gamma_0]_{\Pi_1(N)}$. This proves our claim.

Thus we have showed that $\Pi_1(N)$ is non-trivial and generated by
one element. In particular, it is Abelian. According to
\cite[Theorem 1.129]{Abate}, ${\Pi_1(N)}$ is isomorphic either $\Z$
or $\Z\times\Z$. Since it is generated by one element, ${\Pi_1(N)}$
has to be isomorphic to $\Z$. That is, $N$ is doubly connected. By
\cite[Corollary 1.1.30]{Abate}, we conclude that there is a
biholomorphism $\mathcal H$ from $N$ onto either $\C^*$, $\D^*$, or
an annulus $\mathbb A_r$ for some $0<r<1$.

Write $h_t=\mathcal H\circ g_t$. According to (i)\,--\,(iv),
\begin{itemize}
  \item[(i')] $h_t$ is univalent for all $t\ge0$;
  \item[(ii')] $h_s(D_s)\subset h_t(D_t)$ whenever $0\leq s<t<+\infty$;
  \item[(iii')] $\Omega:=\cup_{t\ge0}h_t(D_t)$ is either $\C^*$, $\D^*$, or an annulus $\mathbb A_r$ for some $0<r<1$;
  \item[(iv')] $h_s=h_t\circ \varphi_{s,t}$ whenever $0\leq
  s<t<+\infty$.
\end{itemize}

\noindent{\bf Step 3.} {\it Let $r_\infty:=\lim_{r\to+\infty}r(t)$.
If $r_\infty=0$, then $\Omega \in\{\D^*,\C^*\}$. Otherwise,
$\Omega=\mathbb A_{r_\infty}$.}

If $\Omega =\mathbb A_r$ for some $0<r<1$, set $a:=r$. Otherwise,
put $a:=0$. To simplify the exposition, we will assume as usual that
$\frac{1}{0}=+\infty$ and $\log(+\infty)=+\infty$.

Take $\varepsilon\in(0,(1-a)/2)$. Then $\overline{\mathbb
A_{a+\varepsilon,1-\varepsilon}}\subset\Omega$. By (ii'), (iii') and
the compactness of $\overline{\mathbb
A_{a+\varepsilon,1-\varepsilon}}$, there is $t_0\ge0$ such that
$\mathbb G_\varepsilon:=\mathbb
A_{a+\varepsilon,1-\varepsilon}\subset h_t(D_t)$ for all $t>t_0$.
Moreover, it is clear that any closed curve $\gamma\subset \mathbb
G_\varepsilon$ homotopically nontrivial in $\mathbb G_\varepsilon$
is also homotopically nontrivial in $\Omega$ and hence must be
homotopically nontrivial in $h_t(D_t)\subset \Omega$. Therefore,
from M1\,--\,M4 we get
\begin{equation*}
   \frac{1}{2\pi}\log\left(\frac{1-\varepsilon}{a+\varepsilon}\right)= M(\mathbb G_\varepsilon)
   \leq M(h_t(D_t))=M(D_t)=\frac{1}{2\pi}\log \left(\frac{1}{r(t)}\right).
\end{equation*}
Thus $(1-\varepsilon)/(a+\varepsilon)\leq 1/r(t)$. Passing to the
limit as $t\to+\infty$ and then letting $\varepsilon\to+0$ we get
$r_\infty \leq a$.

On the other hand, by Step 2a, any homotopically nontrivial closed
curve in $N_s$ is also homotopically nontrivial in $N$. By conformal
equivalence, this statement can be translated to the domains
$h_s(D_s)$ and $\Omega$. Hence, using again M1\,--\,M4, we may
conclude that
\begin{equation*}
    \frac{1}{2\pi}\log \left(\frac{1}{r(s)}\right)=M(D_s)=M(h_s(D_s))\leq M(\Omega)= \frac{1}{2\pi}\log
    \left(\frac{1}{a}\right).
\end{equation*}
Passing to the limit as $s\to+\infty$, we obtain the inequality
$r_\infty \geq a$. Therefore $r_\infty = a$.

This means that if $r_\infty=0$, then $M(\Omega)=\infty$ and $\Omega
\in\{\D^*,\C^*\}$, while for $r_\infty>0$ we have $\Omega =\mathbb
A_{r_\infty}$.

\noindent{\bf Step 4.} {\it There is $\kappa\in\{-1,1\}$ such that
$I(h_t\circ\gamma)=\kappa I(\gamma)$ for any $t\ge0$ and any closed
curve $\gamma\in D_t$.}

Fix $z_0\in D_0$. First of all we note that given $t\ge0$, any
closed curve $\gamma\subset D_t$ is homotopic in $D_t$ to some
closed curve $\tilde \gamma\subset D_0$ with the base point at
$z_0$. In particular this means that
\begin{equation}\label{EQ_It0}
I(\gamma)=I(\tilde\gamma).
\end{equation}
Moreover, by Lemma~\ref{LM_evol-fam-classM}\,(iii), $I(\tilde
\gamma)=I(\varphi_{0,t}\circ\tilde\gamma)$.  This means that
$\varphi_{0,t}\circ\tilde\gamma$ and $\tilde\gamma$ are homotopic in
$D_t$. Therefore $h_t\circ\gamma$ is homotopic to
$h_t\circ\varphi_{0,t}\circ\tilde\gamma=h_0\circ\tilde\gamma$ in
$\Omega$. Hence
\begin{equation}\label{EQ_Ith}
I(h_0\circ\tilde\gamma)=I(h_t\circ\gamma).
\end{equation}
From \eqref{EQ_It0} and~\eqref{EQ_Ith} it follows that in the proof of Step~4
we may fix $t:=0$ and assume that $\gamma$ has the base point at~$z_0$.

Now we claim that the mapping $g_0$ establishes the isomorphism
$G^{g_0}$ between $\Pi_1(D_0)$ and $\Pi_1(N)$ that takes the
equivalence class $[\gamma]_{\Pi_1(D_0)}$ of each closed curve
$\gamma\subset D_0$ with the base point at $z_0$ to the equivalence
class $[g_0\circ\gamma]_{\Pi_1(N)}$ of~$g_0\circ\gamma$. Indeed,
$G^{g_0}$ is a well-defined  group homomorphism. Furthermore,
according to the argument of Step~2, both fundamental groups are
isomorphic to $\mathbb Z$ and the generator $[\alpha]_{\Pi_1(D_0)}$
of $\Pi_1(D_0)$ is mapped by $G^{g_0}$ to the generator
$[g_0\circ\alpha]_{\Pi_1(N)}$ of $\Pi_1(N)$. Thus $G^{g_0}$ is an
isomorphism.

Further, the biholomorphism $\mathcal H:N\to\Omega$ defines in the
canonical way the isomorphism $G^{\mathcal
H}:\Pi_1(N)\to\Pi_1(\Omega)$.

Notice now that for the domains $D_0$ and $\Omega$ there exist a
canonical isomorphisms of their fundamental groups onto $\mathbb Z$,
$G_D:\Pi_1(D)\to\mathbb Z$, $D\in\{D_0,\Omega\}$, defined in the
following way: $G_D$ takes each $[\gamma]_{\Pi_1(D)}$ to
$I(\gamma)$.

Now consider the isomorphism $G_{\mathbb Z}:=G_{\Omega}\circ
G^{\mathcal H} \circ G^{g_0}\circ (G_{D_0})^{-1}:\mathbb Z\to\mathbb Z$.
The only two isomorphisms of $\mathbb Z=(\mathbb Z,+)$ onto itself
are the identity $G_{\mathbb Z}=\id_{\mathbb Z}$ and $G_{\mathbb
Z}:\mathbb Z\ni n\mapsto -n$. In the former case we have $I(h_0\circ
\gamma)=I(\gamma)$ for any closed curve $\gamma\subset D_0$ with the
base point at $z_0$, while in the latter case we have $I(h_0\circ
\gamma)=-I(\gamma)$ for all such $\gamma$'s.

Set $f_t:=h_t$ for all $t\ge0$ if $\kappa=1$, $f_t:=r_\infty/h_t$
for all $t\ge0$ if $\kappa=-1$, $r_\infty>0$, and $f_t:=1/h_t$ for
all $t\ge0$ if $\kappa=-1$, $r_\infty=0$.

\noindent{\bf Step 5.} {\it $(f_t)$ is a standard Loewner chain of
order $d$ over $(D_t)$ associated with $(\varphi_{s,t})$.}

From (i') and (iv') it follows that $(f_t)$ satisfies the condition of
Lemma~\ref{LM_LC3a}. Hence $(f_t)$ is an $L^d$-Loewner chain associated with
$\big((D_t),(\varphi_{s,t})\big)$.

The fact that $(f_t)$ is a standard Loewner chain follows from
(iii'), the definition of $f_t$, and Steps 3 and~4.

\noindent{\bf Step 6.} {\it If $(g_t)$ is another Loewner chain
associated with $\big((D_t),(\varphi_{s,t})\big)$, then there is a
biholomorphism $F:\cup_{t\ge0}f_t(D_t) \to \cup_{t\ge 0}g_t(D_t)$
such that $g_t=F\circ f_t$ for all $t\ge0$.}

The proof of this step is similar to an argument from the proof of
\cite[Theorem 4.9]{ABHK}, so we omit it.

Since the statement of the theorem is the combination of Step 5 and
Step 6, the proof is now finished.
\end{proof}

\section{Conformal types of Loewner chains via evolution
families}\label{S_conf_type_EF}

This section is devoted to the classification of Loewner chains in
terms of the limit behavior of their evolution families. We will
prove Theorem~\ref{TH_types_nondeg} and
Proposition~\ref{TH_types_mixed} giving such a classification.

The proofs are based on following lemmas.

It is known that given a Jordan curve $\gamma\subset\C$,  there
exists $\kappa\in \{1,-1\}$ such that the index of $w$ w.r.t.
$\gamma$, $\textrm{ind}(\gamma, w)\in \{0,\kappa\}$ for all $w\in
\C\setminus \gamma$. As usual, we denote by
$\textrm{int}(\gamma):=\{w\in \C\setminus
\gamma:\textrm{ind}(\gamma, w) \neq 0  \}$ and
$\textrm{out}(\gamma):=\{w\in \C\setminus
\gamma:\textrm{ind}(\gamma, w) = 0  \}$.

\begin{lemma} \label{Argument_Principle}
Let $f:\mathbb A_r \to \C^*$ be a univalent function such that
$I(f\circ\gamma)=I(\gamma)$ for any closed curve
$\gamma\subset\mathbb A_{r}$. Then  $f(z)\in
\mathrm{out}\big(f(C(0,R))\big)$ whenever $r<R<|z|<1$.
\end{lemma}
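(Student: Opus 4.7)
The plan is to apply the argument principle on an auxiliary annulus sandwiched between $C(0,R)$ and a slightly smaller circle $C(0,R')$ with $|z|<R'<1$. First I observe that, because $f$ is univalent and continuous, $\Gamma_R:=f(C(0,R))$ is a Jordan curve in $\C^*$; applying the hypothesis $I(f\circ\gamma)=I(\gamma)$ to $\gamma=C(0,R)$ (parameterized counterclockwise) gives $I(\Gamma_R,0)=1$, so $0\in\mathrm{int}(\Gamma_R)$ and, by the Jordan curve theorem, the index of any point of $\C\setminus\Gamma_R$ w.r.t.\ $\Gamma_R$ belongs to $\{0,1\}$, being $0$ on $\mathrm{out}(\Gamma_R)$ and $1$ on $\mathrm{int}(\Gamma_R)$. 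The same applies to $\Gamma_{R'}:=f(C(0,R'))$.

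Next, given $z$ with $R<|z|<1$, I pick $R'$ with $|z|<R'<1$ and set $w_0:=f(z)$. The function $g(w):=f(w)-w_0$ is holomorphic on a neighborhood of $\{w:R\le|w|\le R'\}$ and, by univalence of $f$, vanishes in this region only at $w=z$; in particular it is zero-free on the boundary. The argument principle applied to the annular region, whose boundary consists of $C(0,R')$ (counterclockwise) and $C(0,R)$ (clockwise), then yields
\begin{equation*}
I(\Gamma_{R'},w_0)-I(\Gamma_R,w_0)=\#\{w:R<|w|<R',\ f(w)=w_0\}=1.
\end{equation*}
Since each of the two indices lies in $\{0,1\}$ by the first paragraph, the only possibility is $I(\Gamma_{R'},w_0)=1$ and $I(\Gamma_R,w_0)=0$. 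The latter equality is precisely $f(z)\in\mathrm{out}(\Gamma_R)$, which is the desired conclusion.

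The only real subtlety here is getting the orientation conventions straight in the argument principle on the annulus (inner boundary traversed clockwise), together with checking that the hypothesis on $f$ really forces both $\Gamma_R$ and $\Gamma_{R'}$ to be positively oriented around $0$ so that the Jordan-curve indices are $0$ or $+1$ rather than $0$ or $-1$. Once this is set up, the proof reduces to the single identity above; no deeper obstacle is expected.
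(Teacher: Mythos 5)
Your argument is correct and coincides, step for step, with the paper's own proof: both apply the argument principle to $f(w)-w_0$ on the annulus between $C(0,R)$ and a slightly larger circle, use the hypothesis $I(f\circ\gamma)=I(\gamma)$ to pin the two Jordan curve indices to $\{0,1\}$, and then solve the resulting integer equation. The orientation bookkeeping you flag as the only subtlety is handled correctly.
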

\begin{proof}
Consider an arbitrary $z_0$ satisfying $R<|z_0|<1$.  Let
$w_0:=f(z_0)$. We have to prove that $\textrm{ind}(f\circ C^-,
w_0)=0$, where  $C^-$ is the circle $C(0,R):=\{z: |z|=R\}$ oriented
clockwise.

Fix $\tilde R\in\big(|z_0|,1\big)$. By $C^+$ we denote the circle
$C(0,\tilde{R})$ oriented counter-clockwise.

Since the equation $f(z)-w_0=0$ has exactly one zero $z=z_0$ in the
annulus $\mathbb A(R,\tilde{R}):=\{z:R<|z|<\tilde R\}$, by the
argument principle we have
\begin{equation}\label{Argument_Principle_1}
1=\frac{1}{2\pi i}\int_{\partial \mathbb
A(R,\tilde{R})}\frac{f'(z)}{f(z)-w_0}dz= \frac{1}{2\pi
i}\int_{f\circ C^+}\frac{dw}{w-w_0}+\frac{1}{2\pi i}\int_{f\circ
C^-}\frac{dw}{w-w_0}.
\end{equation}
By hypothesis, $1=I(C^+)=I(f\circ C^+)$. Therefore, 
\begin{equation}\label{Argument_Principle_2}
\frac{1}{2\pi i}\int_{f\circ
C^+}\frac{dw}{w-w_0}=\textrm{ind}(f\circ C^+, w_0)\in \{0,1\}.
\end{equation}
Analogously, 
\begin{equation}\label{Argument_Principle_3}
\frac{1}{2\pi i}\int_{f\circ
C^-}\frac{dw}{w-w_0}=\textrm{ind}(f\circ C^-, w_0)\in \{0,-1\}.
\end{equation}
Clearly, equations (\ref{Argument_Principle_1}),
(\ref{Argument_Principle_2}), and (\ref{Argument_Principle_3}) show
that $\textrm{ind}(f\circ C^-, w_0)=0$. This completes the proof of
the lemma.
\end{proof}
\begin{lemma}
\label{bound_on_annulus} Let $f:\mathbb A_r \to \D^*$, $r\in[0,1)$,
be a univalent function such that $I(f\circ\gamma)=I(\gamma)$ for
any closed curve $\gamma\subset\mathbb A_{r}$. Then
\begin{equation}\label{EQ_f-est}
    |f(z)|\leq \frac{\pi}{\sqrt{2\log(1/|z|)}}
\end{equation}
for all $z\in \mathbb A_r$.
\end{lemma}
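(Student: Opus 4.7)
The plan is a length-area argument based on Lemma~\ref{Argument_Principle}. Fix $z \in \mathbb A_r$ and set $R_0 := |z|$, $w_0 := f(z)$. The winding-preserving hypothesis, combined with Lemma~\ref{Argument_Principle} (and the analogous argument-principle computation carried out with the roles of $R$ and $|z|$ exchanged), implies that for each $R \in (R_0, 1)$ the Jordan curve $\gamma_R := f(C(0, R)) \subset \D^*$ encloses both $0$ and $w_0$ in its interior.

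Such a Jordan curve has Euclidean length at least $2|w_0|$: the orthogonal projection onto the line through $0$ and $w_0$ is a $1$-Lipschitz map whose image is a compact interval containing both projections (since both points lie in the interior of $\gamma_R$ and the extremes of the projection are attained on $\gamma_R$), hence of length at least $|w_0|$, and a closed curve traversing such an interval has total variation at least $2|w_0|$. Parametrizing $\gamma_R$ by $\theta \in [0, 2\pi]$ yields $R \int_0^{2\pi} |f'(Re^{i\theta})|\,d\theta \geq 2|w_0|$; squaring and applying the Cauchy--Schwarz inequality $\bigl(\int_0^{2\pi} |f'|\,d\theta\bigr)^2 \leq 2\pi \int_0^{2\pi} |f'|^2\,d\theta$ produces the pointwise bound
\[
R^2 \int_0^{2\pi} |f'(Re^{i\theta})|^2 \,d\theta \geq \frac{2|w_0|^2}{\pi}.
\]

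Dividing by $R$ and integrating over $R \in (R_0, 1)$ then gives $\iint_{\{R_0 < |z| < 1\}} |f'(z)|^2\,dA(z) \geq (2|w_0|^2/\pi) \log(1/R_0)$. By the univalence of $f$, the left-hand side is the Euclidean area of $f(\{R_0 < |z| < 1\}) \subset \mathbb D$, which is bounded above by $\pi$; rearranging gives $|w_0|^2 \log(1/R_0) \leq \pi^2/2$, which is equivalent to the claimed estimate \eqref{EQ_f-est}. The main step requiring care is the initial geometric assertion that $\gamma_R$ encloses $w_0$ for every $R > R_0$, which is not quite the printed statement of Lemma~\ref{Argument_Principle} but should follow from essentially the same contour-integral computation performed there; I expect this to be the principal technical point.
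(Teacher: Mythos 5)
Your proof is correct and reaches the same inequality, but by a genuinely different route.  The paper fixes $z_0$ on a circle $|z|=R$ at which $|f|$ is maximal, shows that every nontrivial closed curve in the \emph{image} ring domain $f(\mathbb A_R)$ has Euclidean length at least $2|f(z_0)|$ (using Lemma~\ref{Argument_Principle} to place such a curve in $\mathrm{out}(f(C(0,R)))$, which forces it to hit both rays $\pm f(z_0)[1,+\infty)$), and then invokes the abstract machinery of the module of a ring domain together with its conformal invariance (properties M1--M4 and the definition of module via extremal length) to conclude.  You instead fix $z$ in the domain, enclose $w_0=f(z)$ and the origin by the image circles $\gamma_R=f(C(0,R))$ for $R\in(|z|,1)$, derive the same length lower bound $2|w_0|$ for each $\gamma_R$ by a projection argument, and then replace the appeal to modules by an explicit Cauchy--Schwarz length--area computation bounded by $\mathrm{area}(\D)=\pi$; this is in effect an elementary, self-contained unwinding of the $\rho_0=\mathbf 1_{\D}$ extremal-length estimate the paper uses.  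Your one worry --- that $\gamma_R$ encloses $w_0$ when $R>|z|$, which is the orientation-reversed sibling of Lemma~\ref{Argument_Principle} --- is indeed settled by the identical argument-principle computation with the annulus $\mathbb A(\tilde R,R)$ taken to contain $z$: the two indices belong to $\{0,1\}$ and $\{-1,0\}$ respectively, and they sum to $1$, forcing $\mathrm{ind}(\gamma_R,w_0)=1$; that $0\in\mathrm{int}(\gamma_R)$ comes directly from $I(\gamma_R)=I(C(0,R))=1$.  So there is no gap.  A side note: the paper gains nothing in strength from choosing $z_0$ to maximize $|f|$ on $|z|=R$; your version, treating $z$ directly, is equally valid and perhaps cleaner.
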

\begin{proof} In this proof we will use again the notion and properties of module of a doubly connected domain, see
Section~\ref{S_from_EF_to_LC}.

Fix $R\in (r,1)$. Take $z_0$ such that $|z_0|=R$ and
$|f(z_0)|=N:=\max\{|f(z)|:\ |z|=R\}$. Write $w_0:=f(z_0)$. Consider
an arbitrary closed rectifiable curve $\gamma\subset f(\mathbb A_R)$
with $I(\gamma)\neq 0$. Denote by $L_1$ the ray $-w_0[0,+\infty)$.
Since $I(\gamma)\neq 0$, we have $\gamma\cap L_1\neq\emptyset$.

In a similar way we may conclude that $\gamma\cap L_2\neq\emptyset$,
where $L_2$ stands for the ray $w_0[1,+\infty)$. Indeed, the union
of $E:=f(C(0,R))\cup\textrm{int}\big(f(C(0,R))\big)\cup
L_2\cup\{\infty\}$ contains a Jordan arc connecting the origin with
$\infty$. Hence $I(\gamma)\neq 0$ implies that $\gamma\cap
E\neq\emptyset$. Moreover, by Lemma~\ref{Argument_Principle},
$\gamma\subset\textrm{out}(f(C(0,R))$. Therefore, $\gamma\cap
L_2\neq\emptyset$.

Since $\gamma$ is closed and $\gamma\cap L_j\neq\emptyset$, $j=1,2$,
it follows that the Euclidean length of $\gamma$ is at least
$2|w_0|=2N$.

Define $\rho_0(z):=1$ for $z\in \D$ and $\rho_0(z):=0$ for $z\notin
\D$. Obviously, $\rho_0\in L^2(\C)$. We denote by
$\textrm{len}_{\rho_0}(\gamma)$ the length of $\gamma$ with respect
to the metric $\rho_0(z)|dz|^2$.
 Then, by  the very definition of the module of a doubly connected domain (see, e.g., \cite[Section
I.D, Example~3]{Ahlfors}), we have
\begin{equation*}
    \frac{(2N)^2}{\pi} \leq \frac{[\inf_\gamma \textrm{len}_{\rho_0}(\gamma)]^2}{||\rho_0||_{L^2(\C)}^2} \leq
    \frac{1}{M(f(\mathbb A_R))}=\frac{1}{M(\mathbb A_R)}=\frac{2\pi}{\log (1/R)},
\end{equation*}
where the infimum is taken over all closed rectifiable curves
$\gamma\subset f(\mathbb A _R)$ with $I(\gamma)\neq 0$. Hence
\begin{equation*}
    N\leq \frac{\pi}{\sqrt{2}\sqrt{\log(1/R)}}.
\end{equation*}
This finishes the proof.
\end{proof}

\begin{lemma}\label{LM_ML}
Let $\big(D_t\big)=\big(\mathbb{A}_{r(t)}\big)$ be a canonical domain system of order
$d\in[1,+\infty]$ and $(\varphi_{s,t})$ an evolution family of the same order $d\in[1,+\infty]$
over $\big(D_t\big)$. Then the following statements hold:
\begin{mylist}
\item[(i)] For any $s\ge0$ and any sequence $(t_n)\subset[s,+\infty)$ there exists a subsequence of
$\psi_n:=\varphi_{s,t_n}$ that converges uniformly on compacta in~$D_s$ either to a constant or to
a univalent holomorphic function $\psi:D_s\to\UD^*$. In the latter case, $\psi\in\mathbb M\big(r(s),0\big)$.

\item[(ii)] If there exist $s_0\ge0$ and a sequence $(t_n)\subset[s_0,+\infty)$
such that $t_n\to+\infty$ and $\varphi_{s_0,t_n}$ converges to a
constant as $n\to+\infty$, then for all $s\ge0$, $\varphi_{s,t}\to0$
uniformly on compacta in~$D_s$ as $t\to+\infty$.

\item[(iii)] Either $\rho_{z,s}(t):=|\varphi_{s,t}(z)|\to0$ for all
$s\ge0$ and all $z\in D_s$, or there exists a positive function
$\delta:\mathbb D\to(0,+\infty)$ such that $\delta(z,s)<\rho_{z,s}(t)<1-\delta(z,s)$ whenever $0\le s\le t$ and $z\in D_s$.
\end{mylist}
\end{lemma}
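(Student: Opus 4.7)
The overall plan is to prove (i) by standard normal-family arguments, then prove (ii) in three substages (with the key input being Lemma~\ref{bound_on_annulus}), and finally extract (iii) as a dichotomy from (i) and (ii).

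For part (i), I would observe that each $\psi_n=\varphi_{s,t_n}$ takes values in $\D^*\subset\D$, so by Montel's theorem the family is normal in $D_s$. Extract a subsequence with $\psi_n\to\psi$ uniformly on compacta. Since each $\psi_n$ is univalent by Lemma~\ref{LM_evol-fam-classM}(iv), Hurwitz's theorem forces $\psi$ to be constant or univalent; in the latter case, Hurwitz applied to $\psi_n-0$ together with the open mapping theorem yield $\psi(D_s)\subset\D^*$. The inclusion $\psi\in\classM(r(s),0)$ follows by passing to the limit in $I(\psi_n\circ\gamma)=I(\gamma)$, which holds by Lemma~\ref{LM_evol-fam-classM}(iii): since $\psi_n\circ\gamma\to\psi\circ\gamma$ uniformly on the compact image of $\gamma$ and $0\notin\psi\circ\gamma\subset\D^*$, winding numbers are stable in the limit.

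The proof of (ii) has three stages. First, the constant limit $c$ of $\varphi_{s_0,t_n}$ must vanish: for a closed $\gamma\subset D_{s_0}$ with $I(\gamma)\ne 0$ we have $I(\varphi_{s_0,t_n}\circ\gamma)=I(\gamma)\ne 0$ by Lemma~\ref{LM_evol-fam-classM}(iii), whereas a limit $c\ne 0$ would eventually confine these curves to a small neighbourhood of $c$ avoiding the origin, forcing index zero. The second stage, which I expect to be the main obstacle, is to upgrade convergence along the sequence $(t_n)$ to full convergence as $\tau\to+\infty$; here normality alone only gives subsequential control. The key is Lemma~\ref{bound_on_annulus}: for $\tau>t_n$, the semigroup identity yields $\varphi_{s_0,\tau}(z)=\varphi_{t_n,\tau}(\varphi_{s_0,t_n}(z))$, and applying the lemma to the univalent, index-preserving map $\varphi_{t_n,\tau}\colon D_{t_n}\to\D^*$ gives
\[
|\varphi_{s_0,\tau}(z)|\le\frac{\pi}{\sqrt{2\log(1/|\varphi_{s_0,t_n}(z)|)}},
\]
whose right-hand side tends to $0$ uniformly in $\tau>t_n$ for $z$ in any compact $K\subset D_{s_0}$ as $n\to\infty$. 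The third stage extends this to arbitrary $s\ge 0$ via EF2: for $s\le s_0$ the factorisation $\varphi_{s,\tau}=\varphi_{s_0,\tau}\circ\varphi_{s,s_0}$ transfers the decay directly; for $s>s_0$ I would take any $\tau_n\to+\infty$, pass to a subsequential limit $\chi$ of $(\varphi_{s,\tau_n})$ via (i), and use $\varphi_{s_0,\tau_n}=\varphi_{s,\tau_n}\circ\varphi_{s_0,s}$ together with the identity theorem to conclude $\chi\equiv 0$ on $D_s$.

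For (iii), assume the first alternative fails, so there exist $s_0,z_0$ with $\varphi_{s_0,t}(z_0)\not\to 0$. For any pair $(s,z)$, I would show both $\inf_t|\varphi_{s,t}(z)|>0$ and $\sup_t|\varphi_{s,t}(z)|<1$, and set $\delta(z,s)$ equal to the minimum of these two positive quantities. If $|\varphi_{s,t_n}(z)|\to 0$ along some $t_n\to+\infty$, extract via (i) a subsequential limit $\chi$: a constant limit is forced to be $0$ by the index argument of the first stage of (ii), whereupon (ii), applied with $s$ in the role of $s_0$, forces $\varphi_{s_0,t}(z_0)\to 0$, a contradiction; a univalent limit has $\chi(z)=0\notin\D^*$, also impossible. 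Symmetrically, if $|\varphi_{s,t_n}(z)|\to 1$, a constant limit of modulus $1$ contradicts the index argument, and a univalent limit would have $|\chi(z)|=1$ with $\chi(D_s)\subset\D$ by the open mapping theorem---again impossible.
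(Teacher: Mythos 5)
Your argument is correct and follows essentially the same strategy as the paper: normal families and Hurwitz for (i); identifying the constant limit as zero, invoking Lemma~\ref{bound_on_annulus}, and propagating via EF2 and the identity theorem for (ii); and a dichotomy argument combining (i) and (ii) for (iii). The paper merely organizes the base-point transfers in (ii) somewhat differently (first pushing $\varphi_{s,t_n}\to 0$ to every $s$ through $s=0$, and applying Lemma~\ref{bound_on_annulus} only once at the end) and in (iii) assumes that the second alternative fails rather than the first --- immaterial differences.
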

\begin{proof}
Statement (i) follows from the fact that the sequence $(\psi_n)$
forms a normal family in $D_s$ and from Hurwitz's theorem. Indeed,
$\psi_{n}(D_s)\subset\UD$ for all $n\in\Natural$. Hence $(\psi_n)$
is normal in $D_s$. Let $(\psi_{n_k})$ be a subsequence converging
uniformly on compacta in $D_s$ to a function $\psi$. All the
functions $\psi_n$ are univalent in $D_s$. Hence, by the Hurwitz
theorem, $\psi$ is either a constant in $\overline \UD$, or $\psi$
is univalent in $D_s$ and $\psi(D_s)\subset\UD^*$. Clearly, given a
closed curve $\sigma:[0,1]\to\C^*$, there exists $\varepsilon>0$
such that $I(\tilde\sigma)=I(\sigma)$ for any closed curve $\tilde
\sigma:[0,1]\to\C^*$ satisfying the inequality
$|\tilde\sigma(t)-\sigma(t)|<\varepsilon$ for all $t\in[0,1]$.
Taking $\sigma:=\psi\circ\gamma$ and $\tilde
\sigma:=\psi_{n_k}\circ\gamma$, where $\gamma$ is an arbitrary
closed curve in $D_s$, we therefore conclude that $\psi\in\mathbb
M\big(r(s),0\big)$, unless $\psi\equiv0$.

The above argument proves~(i) and shows that if $\psi=\const$, then
$\psi\equiv0$. Therefore, to prove~(ii) we may assume that
$\varphi_{s_0,t_n}\to0$ as $n\to+\infty$. Recall for any $s\ge0$ and
any~$t\ge s$,
\begin{equation}\label{EQ_recall}
\varphi_{0,t}=\varphi_{s,t}\circ\varphi_{0,s}.
\end{equation}
Therefore, $\varphi_{0,t_n}\to0$ in $D_0$ as $n\to+\infty$. The
convergence is uniform on compacta because the family
$(\varphi_{0,t})_{t\ge0}$ is normal in $D_0$. Fix now $s\ge0$.
Taking into account that $\varphi_{0,s}$ is non-constant and using
again~\eqref{EQ_recall} and the normality of $(\varphi_{s,t})_{t\ge
s}$ in $D_s$, we conclude now that
\begin{equation}\label{EQ_s-t_n}
\varphi_{s,t_n}\to 0\text{~~uniformly on compact subsets of~$D_s$ as
$n\to+\infty$.}
\end{equation}
Take any $n\in\Natural$ such that $t_n\ge s$ and let $t\ge t_n$. By
Lemma~\ref{LM_evol-fam-classM}, the function $f:=\varphi_{t_n,t}$
satisfies the condition of Lemma~\ref{bound_on_annulus} with
$r:=r(t_n)$. Note that
$\varphi_{s,t}=\varphi_{t_n,t}\circ\varphi_{s,t_n}$. Hence
\eqref{EQ_f-est} and \eqref{EQ_s-t_n} imply that $\varphi_{s,t}\to0$
as $t\to+\infty$ uniformly on compacta in~$D_s$. This proves~(ii).

It remains to prove~(iii). To this end assume
that~$\rho_{z_0,s_0}(t_n)\to1$ or $\rho_{z_0,s_0}(t_n)\to0$ for some
$s_0\ge0$, some $z_0\in D_s$, and some sequence
$(t_n)\in[s_0,+\infty)$. Since $0<\rho_{z,s_0}(t)<1$ for all
$t\in[s_0,+\infty)$ and the function $\rho_{z_0,s_0}$ is continuous
by Lemma~\ref{LM_evol-fam-classM}(i), we have that $t_n\to+\infty$
as $n\to+\infty$. Moreover,
$\varphi_{s_0,t_n}(D_{s_0})\subset\mathbb D^*$ for all $n\ge0$.
Hence passing if necessary to a subsequence, we may conclude that
$\varphi_{s_0,t_n}$ converges to a constant as $n\to+\infty$. But
then by~(ii), for any $s\ge0$, $\varphi_{s,t}\to0$ in $D_s$ as
$t\to+\infty$, i.e. $\rho_{z,s}(t)\to0$ as $t\to+\infty$ for any
$s\ge0$ and any $z\in D_s$. This proves (iii).
\end{proof}

Now we can prove Theorem~\ref{TH_types_nondeg}.

\begin{proof}[{\bf Proof of Theorem~\ref{TH_types_nondeg}.}]
Statement (i) of the theorem is already proved: it is equivalent the
statement of Step 3 in the proof of Theorem~\ref{Th_from_EF_to_LC}.

Now we assume $r_\infty=0$.

First we prove (ii). So suppose that $\varphi_{0,t}$ does not
converge to $0$ as $t\to+\infty$. We have to prove that the Loewner
range $L[(\varphi_{s,t})]$ of $(\varphi_{s,t})$ is $\D^*$. According
to Lemma~\ref{LM_ML}, there exists a sequence
$(t_n)\subset[0,+\infty)$ diverging to~$+\infty$ such that
$(\varphi_{0,t_n})$ converges to some univalent function
$\varphi_{0,\infty}$ uniformly on compacta in $D_0$. For given
$s\ge0$ and all $n\in\Natural$ large enough, by EF2 we have
$\varphi_{0,t_n}=\varphi_{s,t_n}\circ\varphi_{0,s}$, with
$\varphi_{0,s}$ being univalent in $D_0$ by
Lemma~\ref{LM_evol-fam-classM}(iv). Using again Lemma~\ref{LM_ML}
and taking into account the normality of $(\varphi_{s,t})_{t\ge s}$
in $D_s$ we may conclude that $(\varphi_{s,t_n})$ also converges to
some univalent function $\varphi_{s,\infty}$ in $D_s$ and that
\begin{equation}\label{EQ_0infty}
\varphi_{0,\infty}=\varphi_{s,\infty}\circ\varphi_{0,s}.
\end{equation}
By EF2, $\varphi_{0,s}=\varphi_{s,u}\circ\varphi_{0,u}$ for any
$u\in[0,s]$. In combination with~\eqref{EQ_0infty} this gives
$\varphi_{s,\infty}\circ\varphi_{s,u}\circ\varphi_{0,u}=\varphi_{u,\infty}\circ\varphi_{0,u}$.
Since $\varphi_{0,u}$ is not constant, by the Identity Theorem for
holomorphic functions we get
$\varphi_{s,\infty}\circ\varphi_{s,u}=\varphi_{u,\infty}$. This
holds for any $s,u\ge0$ with $u\le s$. Then by Lemma~\ref{LM_LC3a},
$(\varphi_{t,\infty})_{t\ge0}$ is a Loewner chain over $(D_t)$
associated with $(\varphi_{s,t})$. By
Theorem~\ref{Th_from_EF_to_LC}, there exists a standard Loewner
chain $(f_t)$ associated with $(\varphi_{s,t})$ and a biholomorphism
$F:L[(\varphi_{s,t})]\to\Omega:=\cup_{t\ge0}\varphi_{t,\infty}(D_t)\subset
\D^*$ such that $\varphi_{t,\infty}=F\circ f_t$ for all $t\ge0$.

We claim that for any closed curve $\gamma\subset
L[(\varphi_{s,t})]$,
\begin{equation}\label{EQ_Fgamma}
I(F\circ\gamma)=I(\gamma).
\end{equation}
Indeed, fix such a curve $\gamma$. By the compactness of $\gamma$,
there exists $t\ge0$ such that $\gamma\subset f_t(D_t)$ and hence
$\gamma=f_t\circ\gamma_t$ for some closed curve $\gamma_t\subset
D_t$. On the one hand, by the definition of a standard Loewner chain
$I(\gamma)=I(f_t\circ\gamma_t)=I(\gamma_t)$. On the other hand,
$\varphi_{t,\infty}\in\mathbb M(r(t),0)$ by Lemma~\ref{LM_ML}(i) and
hence $I(\varphi_{t,\infty}\circ\gamma_t)=I(\gamma_t).$ Recall that
$\varphi_{t,\infty}=F\circ f_t$. Now~\eqref{EQ_Fgamma} follows
easily.

Further, we note that $L[(\varphi_{s,t})]\neq\C^*$ because $\Omega$
is a bounded domain. Moreover, from statement (i) we know that
$L[(\varphi_{s,t})]\neq\mathbb A_r$ for any $r\ge0$. Hence
$L[(\varphi_{s,t})]\in\{\D^*,\Complex\setminus\overline\D\}$ and it
remains to show that
$L[(\varphi_{s,t})]\neq\Complex\setminus\overline\D$. Assume on the
contrary that $L[(\varphi_{s,t})]\neq\Complex\setminus\overline\D$.
The function $F(1/z)$ is holomorphic and bounded in~$\D^*$. Hence it
has a removable singularity at $z=0$. Let $G$ be its holomorphic
extension to $\UD$. Apply the Argument Principle to this function on
the circle $\gamma:=C(0,1/2)$ oriented counterclockwise, so that
$I(\gamma)=1$. Then on the one hand, $I(G\circ\gamma)\ge0$ because
$G$ has no poles in $\UD$. But on the other hand
$I(G\circ\gamma)=-1$ by~\eqref{EQ_Fgamma}. This contradiction proves
that ${L[(\varphi_{s,t})]=\UD^*}$.

To complete the proof of statement (ii) we have to show that if
$\varphi_{0,t}\to0$, then ${L[(\varphi_{s,t})]\neq \D^*}$. Assume the
contrary and let $(f_t)$ stand again for a standard Loewner chain
associated with $(\varphi_{s,t})$. Fix any $z\in D_0$. By
Lemma~\ref{bound_on_annulus} applied for $f_t:\mathbb A_{r(t)}\to
\D^*$ with $t\ge0$, we get
$$
|f_0(z)|=|f_t(\varphi_{0,t}(z))|\le\frac{\pi}{\sqrt{2\log\big(1/|\varphi_{0,t}(z)|\big)}}.
$$
The left-hand side tends to zero as $t\to+\infty$. Therefore,
$f_0(z)=0$ for all $z\in D_0$. This contradiction shows that
${L[(\varphi_{s,t})]\neq \D^*}$. The proof of (ii) is now finished.

To prove (iii), we only need to apply statement~(ii) to
$(\tilde\varphi_{s,t})$ instead of $(\varphi_{s,t})$ and note that
if $(f_t)$ is a standard Loewner chain associated
with~$(\varphi_{s,t})$ and $r_\infty=0$, then by
Lemma~\ref{LM_LC3a}, $(\tilde f_t)$ is a standard Loewner chain
associated with~$(\tilde\varphi_{s,t})$, where $\tilde
f_t(z):=1/f_t(r(t)/z)$, $t\ge0$, $z\in D_t$.

Finally, statement (iv) holds by the exclusion principle:
$L[(\varphi_{s,t})]=\Complex^*$ if and only if
$L[(\varphi_{s,t})]\not
\in\{\D^*,\Complex\setminus\overline\D,\mathbb A_r:\,r\in(0,1)\}$.
The proof is now complete.
\end{proof}

At the end of the section we prove Proposition~\ref{TH_types_mixed}
giving conformal characterization of a Loewner chain via its
evolution family in the degenerate and mixed-type cases.

\begin{proof}[{\bf Proof of Proposition~\ref{TH_types_mixed}}] Let $(f_t)$ be a standard Loewner chain associated with $\big((D_t),\varphi_{s,t}\big)$.
Notice that $(\varphi_{T+s,T+t})$ is an evolution family over
$(D_{T+t})$, whose standard Loewner chain is $(f_{T+t})$. Moreover,
it is evident that $L[(f_{T+t})]=L[(f_t)]$.

Therefore, we may assume that $T=0$ and
$\big((D_t),\varphi_{s,t}\big)$ is of degenerate type. According
to~\cite[Proposition 5.15]{SMP3}, the functions defined by
$\phi_{s,t}(z):=\varphi_{s,t}(z)$ for $z\in\UD^*$,
$\phi_{s,t}(0)=0$, $0\le s\le t$, form in this case an evolution
family in the unit disk~$\UD$. By~\cite[Theorem 1.6]{SMP} there
exist a Loewner chain~$(g_t)$ in the unit disk~$\UD$ associated with
$(\phi_{s,t})$ such that $g_t(0)=g_t(\phi_{0,t}(0))=g_0(0)=0$ and
$\Omega:=\cup_{t\ge0}g_t(\UD)$ is either a Euclidian disk centered
at the origin or the whole complex plane~$\C$. Moreover, according
to the same theorem, $\Omega=\Complex$ if and only if
$\phi'_{0,t}(0)\to 0$ as $t\to+\infty$. Clearly, the latter
condition is equivalent to the requirement that $\varphi_{0,t}\to0$
as $t\to+\infty$. Finally, we notice that (up to scaling in case
$\Omega\neq\Complex$) the family $(g_t|_{\UD^*})$ is a standard
Loewner chain associated with~$(\varphi_{s,t})$. This finishes the
proof, since $\cup_{t\ge0}g_t(\UD^*)=\Omega\setminus\{0\}$.
\end{proof}

\section{Non-degenerate evolution families: convergence to
zero}\label{S_convzero}

For any $r\in[0,1)$ and any $f\in\Hol(\mathbb A_{r},\Complex)$ we denote by $\mathcal N(f)$ the
free term in the Laurent development of $f$:
$$
\mathcal N(f):=\int_{\UC}f(\rho\xi)\frac{|d\xi|}{2\pi},\quad \rho\in(r,1).
$$

\begin{theorem}\label{TH_EF0}
Let $\big(D_t\big)=\big(\mathbb{A}_{r(t)}\big)$ be a non-degenerate canonical domain system of
order $d\in[1,+\infty]$ and $(\varphi_{s,t})$ an evolution family of the same order
$d\in[1,+\infty]$ over $\big(D_t\big)$. Suppose that $r_\infty:=\lim_{t\to+\infty}r(t)=0$.  Then
the following two statements are equivalent:
\begin{mylist}
\item[(A)] For any $s\ge0$, $\varphi_{s,t}\to0$ uniformly on compacta in~$D_s$ as $t\to+\infty$.
\item[(B)] The weak holomorphic vector field $G:\mathcal D\to\Complex$ associated with~$(\varphi_{s,t})$ satisfies the condition
\begin{equation}\label{EQ_int}
\int\limits_0^{+\infty}\Re\mathcal N\big(D_t\ni w\mapsto
G(w,t)/w\big)\,dt=-\infty.
\end{equation}
\end{mylist}
\end{theorem}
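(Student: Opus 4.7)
The plan is to reduce (A) and (B) to comparable integral conditions through the orbit-wise Carath\'eodory ODE. First, Theorem~\ref{TH_semi-char-non-deg} lets me write $G(w,t)=w[iC(t)+(r'(t)/r(t))p(w,t)]$ with $p(\cdot,t)\in\ParClass_{r(t)}$. Inspection of the series~\eqref{EQ_Villat_kernel} shows that in $\mathbb A_{r(t)}$ each paired summand $\tfrac{1+r^{2\nu}z}{1-r^{2\nu}z}+\tfrac{1+z/r^{2\nu}}{1-z/r^{2\nu}}$ has vanishing Laurent constant in $z$ ($+1$ from the expansion of the first summand about $z=0$, $-1$ from that of the second in $|z|>r^{2\nu}$), so $\mathcal N(\mathcal K_{r(t)})=1$; combined with~\eqref{EQ_represV} this gives $\mathcal N(p(\cdot,t))=\mu_1^{(t)}(\UC)$. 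Hence $\Re\mathcal N(w\mapsto G(w,t)/w)=(r'(t)/r(t))\mu_1^{(t)}(\UC)$, and since $r'(t)/r(t)\le 0$, condition (B) is equivalent to
\[
\int_0^{+\infty}\frac{r'(t)}{r(t)}\mu_1^{(t)}(\UC)\,dt=-\infty. \qquad(\sharp)
\]

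Next, fix $z_0\in D_0$ and set $w(t):=\varphi_{0,t}(z_0)$. By Theorem~\ref{TH_EFsWHVF}(A), $\dot w(t)=G(w(t),t)$ a.e.\ and $w(t)\ne 0$, so
\[
\log|\varphi_{0,t}(z_0)|-\log|z_0|=\int_0^t\frac{r'(\tau)}{r(\tau)}\Re p(w(\tau),\tau)\,d\tau. \qquad(\star)
\]
By Lemma~\ref{LM_ML}(iii), condition (A) is equivalent to $|\varphi_{0,t}(z_0)|\to 0$; otherwise $|\varphi_{0,t}(z_0)|\in[\delta,1-\delta]$ for some $\delta>0$ and all $t\ge 0$, so the left-hand side of $(\star)$ stays bounded. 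In either case, (A) amounts to the right-hand side of $(\star)$ diverging to $-\infty$, and the theorem reduces to comparing this orbit integral with $(\sharp)$.

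For the comparison I exploit the harmonic-measure decomposition $\Re\mathcal K_r(u)=P^{\mathrm{out}}_r(u,1)+\log|u|/\log r$, where $\log|u|/\log r$ is the harmonic measure of the inner boundary of $\mathbb A_r$ at $u$ and $P^{\mathrm{out}}_r(\cdot,\xi)\ge 0$ is the outer-boundary Poisson kernel of $\mathbb A_r$. Substituting into~\eqref{EQ_represV} and using $\mu_1^{(t)}(\UC)+\mu_2^{(t)}(\UC)=1$, one obtains the identity
\[
\Re p(w,t)=\int_\UC P^{\mathrm{out}}_{r(t)}(w/\xi,1)\,d\mu_1^{(t)}(\xi)+\frac{\log|w|}{\log r(t)}-\int_\UC P^{\mathrm{out}}_{r(t)}(r(t)\xi/w,1)\,d\mu_2^{(t)}(\xi).
\]
As $r(t)\to 0$ with $|w|$ in compact subsets of $(0,1)$, $P^{\mathrm{out}}_{r(t)}(w/\xi,1)\to(1-|w|^2)/|1-w\bar\xi|^2$ uniformly, so the first integral tends to the classical Poisson integral of $\mu_1^{(t)}$ at $w$; this Poisson integral is comparable to $\mu_1^{(t)}(\UC)$ on any such compact subset, with constants approaching $1$ as $|w|\to 0$. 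The middle term and the third integral are controlled by Harnack-type bounds on $\mathbb A_{r(t)}$.

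I finish by dichotomy. If (A) holds then $|w(t)|\to 0$, and the identity above gives $\Re p(w(t),t)=\mu_1^{(t)}(\UC)+\eta(t)$ with an error $\eta(t)$ which, when paired against $(r'/r)$ and integrated, remains controlled via $\mu_1^{(t)}+\mu_2^{(t)}=1$; the divergence of the left-hand side of $(\star)$ then transfers to $(\sharp)$. Conversely, when (A) fails, $|w(t)|\in[\delta,1-\delta]$ yields a Harnack-type pinching $c_\delta\,\mu_1^{(t)}(\UC)\le\Re p(w(t),t)\le C_\delta\,\mu_1^{(t)}(\UC)$ up to an analogous error, and the boundedness of the left-hand side of $(\star)$ forces $(\sharp)$ to fail. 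The main technical obstacle is the uniform integral control of the $\mu_2$-driven correction $(\log|w|/\log r(t))\mu_2^{(t)}(\UC)-\int P^{\mathrm{out}}_{r(t)}(r(t)\xi/w,1)\,d\mu_2^{(t)}$: it does not decay pointwise fast enough on its own, and the needed cancellation emerges only after balancing it against the $\mu_1$-principal term through $\mu_1+\mu_2=1$, or equivalently through the symmetric setup for the reflected evolution family $(\tilde\varphi_{s,t})$ of~\cite[Example~6.3]{SMP3}, whose corresponding measure is precisely $\mu_2^{(t)}$.
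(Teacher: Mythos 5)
Your setup matches the paper's: Theorem~\ref{TH_semi-char-non-deg} gives the representation $G(w,t)=w[iC(t)+(r'(t)/r(t))p(w,t)]$ with $p(\cdot,t)\in\ParClass_{r(t)}$; the Laurent computation $\mathcal N(\mathcal K_{r(t)})=1$, hence $\mathcal N(p(\cdot,t))=\mu_1^t(\UC)$, is correct (this is Remark~\ref{RM_int} in the paper); and the reduction of both (A) and (B) to the orbit-wise radial ODE $\dot\rho(t)=r'(t)\,\rho(t)\,\Re p(w(t),t)/r(t)$, combined with Lemma~\ref{LM_ML}(iii), is exactly the first step of the paper's argument. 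Your harmonic-measure decomposition $\Re\mathcal K_r(u)=P^{\mathrm{out}}_r(u,1)+\log|u|/\log r$ is also correct (the boundary values of $\Re\mathcal K_r$ are $1$ on $\{|u|=r\}$ and a point mass at $u=1$ on $\{|u|=1\}$, and subtracting the harmonic measure $\log|u|/\log r$ of the inner circle kills the inner boundary value while preserving positivity).

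The genuine gap is precisely the one you flag at the end, and it is not a routine technicality. The hard implication is (A)$\Rightarrow$(B), proved by contradiction with both $\rho(t)\to0$ and $r(t)\to0$ while $\int_0^\infty (-r'/r)\mu_1^t\,dt<\infty$; the regime that kills your decomposition is when $\rho(t)/r(t)$ sits (or oscillates) near $1$. There, the principal term $\int P^{\mathrm{out}}_{r(t)}(w/\xi,1)\,d\mu_1^t$ vanishes because the outer Poisson kernel has boundary value $0$ on the inner circle (so it is \emph{not} comparable to $\mu_1^t(\UC)$ from below), the harmonic-measure term $\log\rho/\log r$ tends to $1$, and the $\mu_2$-correction $-\int P^{\mathrm{out}}_{r(t)}(r\xi/w,1)\,d\mu_2^t$ involves the kernel evaluated near the \emph{outer} circle, where it is unbounded and not smallness-controllable by $\mu_2^t(\UC)$. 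Moreover $\int_0^{\infty}\big|r'(t)/(r(t)\log r(t))\big|\,dt=+\infty$, so the harmonic-measure piece is not absolutely integrable against $r'/r$ on its own and must be cancelled against the $\mu_2$-term — but you give no estimate making that cancellation quantitative in the critical $\rho/r\to1$ regime. No Harnack bound or ``comparable to $\mu_1^t(\UC)$'' statement can be salvaged uniformly there.

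The paper closes this gap with an idea that your decomposition does not supply: it introduces the auxiliary function $F(t):=\sqrt{\rho(t)}-r(t)/\sqrt{\rho(t)}$, multiplies the ODE by $\rho^{-1/2}+r\rho^{-3/2}$ so that the left-hand side becomes $-2\,dF/dt$ plus a controllable correction, applies the explicit kernel bounds $\mathcal K_r(-x)\ge(1-x)/(1+x)$ and $\mathcal K_r(x)-1\le 4(x-r)/\big[(1-r^2)(1-x)(1-r)\big]$ (which intrinsically carry a factor of $(x-r)$, i.e.\ of $F$), and then integrates from a sequence $(t_n)$ chosen so that $F(t_n)\ge F(t)$ for $t\ge t_n$ (which exists because $F>0$ and $F\to0$). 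Dividing out $F(t_n)>0$ then yields $2\le$ a tail integral that $\to0$ under the convergence assumption — the contradiction. This monotone-envelope argument is what handles the regime $\rho\sim r\to0$; nothing in your Poisson/harmonic-measure decomposition or in the reflection to $(\tilde\varphi_{s,t})$ forces an analogous dichotomy, so as written the proposal does not prove (A)$\Rightarrow$(B).
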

\begin{remark}\label{RM_int}
According to Theorem~\ref{TH_semi-char-non-deg}, the weak
holomorphic vector field $G$ in the above theorem has the following
representation
\begin{equation*}
G(w,t)=w\left[iC(t)+\frac{r'(t)}{r(t)}p(w,t)\right],\quad \text{for a.e. $t\ge0$ and all $w\in D_t$},
\end{equation*}
where $C\in L^d_{\mathrm{loc}}\big([0,+\infty),\Real\big)$, and the function $p$ is measurable
in~$t$ and belongs, as a function of $w$, to the class $\mathcal V_{r(t)}$ for every fixed $t\ge0$.
Denote by $\mu_1^t$ and $\mu_2^t$ the measures in representation~\eqref{EQ_represV} for
$p(\cdot,t)$. Then condition~\eqref{EQ_int} takes the following form:
\begin{equation}\label{EQ_intM}
-\int_{0}^{+\infty}r'(t)\frac{\mu_1^t(\UC)}{r(t)}dt=+\infty,
\end{equation}
while the negation of~\eqref{EQ_int} is equivalent to the convergence of the above integral,
because the integrand is non-positive for a.e. $t\ge0$.
\end{remark}

\begin{proof}[{\bf Proof of Theorem~\ref{TH_EF0}}]
Fix any $z\in D_0$. Denote $w(t):=\varphi_{0,t}(z)$ and $\rho(t):=|w(t)|$ for all $t\ge0$.

In this proof we use the notation introduced in Remark~\ref{RM_int}.
Using this remark, from the equation $\dot w=G(w,t)$ we get
\begin{equation}\label{EQ_Drho}
\frac{d\rho(t)}{dt}=r'(t)\frac{\rho(t)}{r(t)}\Re p(w(t),t).
\end{equation}

Denote $\nu(t):=\mu_1^t(\UC)$. Note that $0\le\nu(t)\le1$ for all
$t\ge0$. From representation~\eqref{EQ_represV} and properties of
the Villat kernel~$\mathcal K_{r(t)}$ (see, e.g., \cite[Remark
5.2]{SMP3}) it follows that
\begin{multline}\label{EQ_estReP}
\nu(t)\mathcal K_{r(t)}\big(-\rho(t)\big)+(1-\nu(t))\big[1-\mathcal
K_{r(t)}\big(r(t)/\rho(t)\big)\big]\le\Re p(w(t),t)\le\\
\nu(t)\mathcal K_{r(t)}(\rho(t))+(1-\nu(t))\big[1-\mathcal
K_{r(t)}\big(-r(t)/\rho(t)\big)\big].
\end{multline}

Using the Laurent development of the Villat kernel, we get
\begin{equation}\label{EQ_est1}
\mathcal K_r(x)-1=2\sum_{k=1}^{+\infty}\frac{x^k-(r^2/x)^k}{1-r^{2k}}\le 2\sum_{k=1}^{+\infty}\frac{x^k}{1-r^{2k}} \le \frac{2}{1-r^2}\frac{x}{1-x},\quad 0<r<x<1,
\end{equation}
while from~\eqref{EQ_Villat_kernel} it follows that
\begin{equation}\label{EQ_est2}
\mathcal K_r(-x)\ge \mathcal K_0(-x)=\frac{1-x}{1+x},\quad 0<r<x<1.
\end{equation}

Let us first prove that (B) implies (A). Assume that statement (A) does not hold. Then by
Lemma~\ref{LM_ML}\,(iii), we have $1-\delta>\rho(t)>\delta$ for some positive constant $\delta$ and
all $t\ge0$. Recall that $r(t)\to0$ as $t\to+\infty$. Since the functions $t\mapsto r(t)$ and
$t\mapsto \rho(t)$ are continuous and satisfy inequality $r(t)<\rho(t)$ for all $t\ge0$, we can
conclude that there exists $\delta_1>0$ such that $\rho(t)>r(t)+\delta_1$. Then taking into account
that $t\mapsto \rho(t)$ is locally absolutely continuous in $[0,+\infty)$ and that $r'(t)\le0$ for
a.e. $t\ge0$, from~\eqref{EQ_Drho}\,--\,\eqref{EQ_est2} we get that for all $T>0$,
\begin{multline}\label{EQ_nuzhno}
\rho(0)-\rho(T)\ge -\int_0^T
r'(t)\frac{\rho(t)}{r(t)}\left[\nu(t)\frac{1-\rho(t)}{1+\rho(t)}-(1-\nu(t))\frac{2r(t)}{1-r(t)^2}
\frac1{\rho(t)-r(t)}\right]dt\\\ge -\frac{\delta^2}{2}\int_0^T
r'(t)\frac{\nu(t)}{r(t)}dt+\frac2{\delta_1}\int_0^T \frac{r'(t)}{1-r(t)^2}dt.
\end{multline}
The left-hand side and the second term in the right-hand side of the above inequality are bounded. Hence the integral
$$
-\int_0^T r'(t)\frac{\nu(t)}{r(t)}dt
$$ is bounded from above. With the help of Remark~\ref{RM_int} it follows that statement (B) fails to be true. Thus,
(B)$\Rightarrow$(A).

It remains to prove that (A)$\Rightarrow$(B). Assume on the contrary that (B) does not hold, while
(A) is true. Then on the one hand, the integral in~\eqref{EQ_intM} converges, but on the other
hand, $\rho(t)\to0$ as $t\to+\infty$. To obtain a contradiction we need another estimate for
$\mathcal K_r(x)$. Using again the Laurent development of the Villat kernel, we obtain
\begin{multline}\label{EQ_est3}
\mathcal K_r(x)-1=2\sum_{k=1}^{+\infty}\frac{x^k-(r^2/x)^k}{1-r^{2k}}\le
\frac{2}{1-r^2}\sum_{k=1}^{+\infty}\big(x^k-(r^2/x)^k\big)\\=
\frac{2}{1-r^2}\left(\frac{x}{1-x}-\frac{r^2/x}{1-r^2/x}\right)=\frac{2}{1-r^2}\,\frac{(x^2-r^2)/x}{(1-x)(1-r^2/x)}\\\le
\frac{4(x-r)}{(1-r^2)(1-x)(1-r^2/x)}\le
\frac{4(x-r)}{(1-r^2)(1-x)(1-r)} ,\quad 0<r<x<1.
\end{multline}

Applying \eqref{EQ_estReP} and~\eqref{EQ_est2}, from~\eqref{EQ_Drho} we get
\begin{multline*}\label{EQ_}
-\left(\frac{1}{\sqrt{\rho(t)}}+\frac{r(t)}{\rho(t)^{3/2}}\right)\frac{d\rho(t)}{dt}%
\le\\%
-r'(t)\left[\frac{2}{\sqrt{\rho(t)}}+\frac{\nu(t)}{r(t)}F(t)\mathcal
K_{r(t)}\big(\rho(t)\big)+\frac{2\nu(t)}{\sqrt{\rho(t)}}\Big(\mathcal
K_{r(t)}\big(\rho(t)\big)-1\Big)\right],
\end{multline*}
where $F(t):=\sqrt{\rho(t)}-r(t)/\sqrt{\rho(t)}$ for all $t\ge0$. Adding $2r'(t)/\sqrt{\rho(t)}$ to
both sides and applying estimate~\eqref{EQ_est3}, we finally obtain
\begin{equation}\label{EQ_F}
-2\frac{dF(t)}{dt}\le -r'(t)\left[\frac{\nu(t)}{r(t)}F(t)\mathcal
K_{r(t)}\big(\rho(t)\big)+\frac{8\nu(t)F(t)}{(1-r(t)^2)(1-r(t))(1-\rho(t))}\right].
\end{equation}

Recall that $\rho(t)>r(t)>0$ for all $t\ge0$ and that both $r(t)$ and $\rho(t)$ tend to $0$ as
$t\to+\infty$. Hence $F(t)>0$ for all $t\ge0$ and $F(t)\to0$ as $t\to+\infty$. In particular, since
$F$ is continuous, there exists a sequence $(t_n)\subset[0,+\infty)$ tending to $+\infty$ such that
for every $n\in\Natural$, $F(t_n)\ge F(t)$ whenever $t\ge t_n$. Then, from~\eqref{EQ_F} for any
$n\in\Natural$ we obtain
\begin{equation*}\label{EQ_F1}
2F(t_n)\le -F(t_n)\int\limits_{t_n}^{+\infty}r'(t)\left[\frac{\nu(t)}{r(t)}\mathcal
K_{r(t)}\big(\rho(t)\big)+\frac{8\nu(t)}{(1-r(t)^2)(1-r(t))(1-\rho(t))}\right]dt.
\end{equation*}
Thus, bearing in mind that $F(t_n)>0$,
\begin{equation}\label{EQ_F2}
2\le
-\int\limits_{t_n}^{+\infty}r'(t)\left[\frac{\nu(t)}{r(t)}\mathcal
K_{r(t)}\big(\rho(t)\big)+\frac{8\nu(t)}{(1-r(t)^2)(1-r(t))(1-\rho(t))}\right]dt.
\end{equation}

By~\eqref{EQ_est1}, $\mathcal K_{r(t)}\big(\rho(t)\big)$ is bounded.
Recall also that by our assumption, the integral
$\int_0^{+\infty}r'(t)\nu(t)/r(t)\,dt$ converges. Hence the integrals 
in the right-hand side of~\eqref{EQ_F2} (note that they depend on $n$) converge as well and
their values tend to $0$ as $n\to +\infty$. However, this fact contradicts inequality~\eqref{EQ_F2}
for $n$ large enough, which completes the proof of the theorem.
\end{proof}

\section{Semicomplete weak holomorphic vector fields in the mixed-type
case}\label{S_mixed-type}

Consider a canonical domain system $(D_t)=(\mathbb A_{r(t)})$ of some order $d\in[1,+\infty]$.
Recall that $(D_t)$ is called {\it non-degenerate} ({\it degenerate})  if $r(t)>0$ for all $t\ge0$
($r(t)\equiv0$, respectively).  If there exists $T\in(0,+\infty)$ such that $r(t)>0$ for
$t\in[0,T)$ and $r(t)=0$ for $t\ge T$, then we say that $(D_t)$ is of {\it mixed type}.

In~\cite[\S5.1]{SMP3} we established an explicit characterization of semicomplete weak holomorphic
vector fields of order~$d$ over a non-degenerate canonical domain system of the same order~$d$,
similar to the non-autonomous Berkson\,--\,Porta representation in Loewner Theory in the unit
disk~\cite[Theorem~4.8]{BCM1}. The degenerate case was shown to be equivalent to the case of the
unit disk with the common fixed point at the origin~\cite[\S5.2]{SMP3}.

In this section we will combine results mentioned above with Theorem~\ref{TH_EF0} to obtain a
characterization of semicomplete weak holomorphic vector fields in the mixed-type case. To simplify
the formulation of our result we will use notation $\ParClass_0$ for the Carath\'eodory class
consisting, by definition, of all holomorphic functions $p:\UD\to\Complex$ such that $p(0)=1$ and
$\Re p(z)>0$ for all $z\in\UD$.

\begin{theorem}\label{TH_semi-char-mixed-type}
Let $d\in[1,+\infty]$ and let $\big(D_t\big)=\big(\mathbb
A_{r(t)}\big)$ be an $L^d$-canonical domain system of mixed type
with $\mathcal T:=\inf\{t\ge0:r(t)=0\}$. Then a function $G:\mathcal
D\to\Complex$, where $\mathcal D:=\{(z,t):\,t\ge0,\,z\in D_t\}$, is
a semicomplete weak holomorphic vector field of order~$d$ if and
only if there exist functions $\alpha:[0,+\infty)\to[0,+\infty)$,
$C:[0,+\infty)\to\Real$, and $p:\mathcal D\to\Complex$ such that:
\begin{mylist}
\item[(i)] $G(w,t)=w\big[iC(t)-
\alpha(t) p(w,t)\big]$ for a.e. $t\ge0$ and all $w\in D_t$;
\item[(ii)] for each $w\in D:=\cup_{t\ge0} D_t$ the function $p(w,\cdot)$ is measurable in
$E_w:=\{t\ge0:\,(w,t)\in\mathcal D\}$;
\item[(iii)]  for each $t\ge0$ the function $p(\cdot\,,t)$ belongs to the class $\ParClass_{r(t)}$;
\item[(iv)] $C\in L^d_{\mathrm{loc}}\big([0,+\infty),\Real\big)$;
\item[(v)] $\alpha(t)=-r'(t)/r(t)$ for a.e. $t\in[0,\mathcal T)$ and $\alpha|_{[\mathcal T,+\infty)}\in
L^d_{\mathrm{loc}}\big([\mathcal T,+\infty), [0,+\infty)\big)$;
\item[(vi)] the function $t\mapsto P(t):=\alpha(t)\mathcal N\big(p(\cdot,t)\big)$ belongs to $L^d_{\mathrm{loc}}
\big([0,+\infty),\Real\big)$.
\end{mylist}
\end{theorem}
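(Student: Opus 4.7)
The plan is to split at $t=\mathcal T$ and reduce to two known characterizations. On $[0,\mathcal T)$ the system is non-degenerate, so Theorem~\ref{TH_semi-char-non-deg} applies after an extension: given any $\mathcal T'<\mathcal T$, one extends $r$ to a non-degenerate $AC^d$ function by freezing its value at $\mathcal T'$, and extends $G$ by~$0$ beyond~$\mathcal T'$; the resulting field is semicomplete of order~$d$, Theorem~\ref{TH_semi-char-non-deg} gives a representation on $[0,+\infty)$, and its restriction to $[0,\mathcal T')$ is what we need. On $[\mathcal T,+\infty)$ one has $D_t=\UD^*$, so \cite[Prop.~5.15]{SMP3} extends the evolution family to $\UD$ fixing~$0$, and the non-autonomous Berkson\,--\,Porta representation in the disk (\cite[Thm.~4.8]{BCM1}) yields the representation there. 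Condition~(vi) is the new ingredient not present in either piece separately; it is exactly the compatibility that lets the two regimes be glued into a single semicomplete field of order~$d$ over~$(D_t)$.

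For the necessity direction, the first reduction yields $C\in L^d_{\mathrm{loc}}$ and $p(\cdot,t)\in\ParClass_{r(t)}$ such that (i), (iii), (iv), and the first half of~(v) hold on $[0,\mathcal T)$ with $\alpha(t):=-r'(t)/r(t)$. On $[\mathcal T,+\infty)$, the disk Berkson\,--\,Porta theorem writes the extended field as $-zq(z,t)$ with $q(\cdot,t)$ in the Carathéodory class and in $L^d_{\mathrm{loc}}$ in~$t$; splitting $q(z,t)=\alpha(t)p(z,t)+ic(t)$ with $\alpha(t):=\Re q(0,t)\ge 0$, $c(t):=\Im q(0,t)$, $p(z,t):=(q(z,t)-ic(t))/\alpha(t)$ when $\alpha(t)>0$ (arbitrary in~$\ParClass_0$ otherwise), and setting $C(t):=-c(t)$, gives (i)\,--\,(v) on $[\mathcal T,+\infty)$. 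Condition~(vi) on $[\mathcal T,+\infty)$ is automatic: since $p(\cdot,t)\in\ParClass_0$, $\mathcal N(p(\cdot,t))=p(0,t)=1$ and so $P(t)=\alpha(t)\in L^d_{\mathrm{loc}}$. On $[0,\mathcal T)$, a termwise evaluation of $\mathcal N$ in the Villat representation~\eqref{EQ_represV}, using that the free term in the Laurent series around $0$ in $D_t$ of each of $\mathcal K_{r(t)}(z/\xi)$ and $\mathcal K_{r(t)}(r(t)\xi/z)$ equals~$1$ for $\xi\in\UC$, gives $\mathcal N(p(\cdot,t))=\mu_1^t(\UC)$, hence $P(t)=-r'(t)\mu_1^t(\UC)/r(t)\ge 0$. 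Semicompleteness forces the trajectory $\varphi_{0,t}(z)$ to land in $\UD^*$ (and not at $0$) at $t=\mathcal T$; by the quantitative estimates driving the proof of Theorem~\ref{TH_EF0}, this lower bound on $|\varphi_{0,\mathcal T}(z)|$ translates into the $L^d_{\mathrm{loc}}$ control of~$P$ on $[0,\mathcal T]$, establishing~(vi).

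For the converse, given $(\alpha,C,p)$ satisfying (i)\,--\,(vi), define $G$ by~(i). Theorem~\ref{TH_semi-char-non-deg} applied on $[0,\mathcal T')$ via the same extension trick produces an $L^d$-evolution family $(\varphi^{(1)}_{s,t})_{s\le t<\mathcal T}$, and the disk case of Theorem~\ref{TH_EFsWHVF}(B) together with \cite[\S 5.2]{SMP3} produces an $L^d$-evolution family $(\varphi^{(2)}_{s,t})_{\mathcal T\le s\le t}$ in~$\UD$ fixing~$0$, whose restriction to $\UD^*$ lives over $(D_t)_{t\ge\mathcal T}$. The main obstacle, and the technical heart of the argument, is the splicing at $t=\mathcal T$: one must show that $\varphi^{(1)}_{s,\mathcal T}(z):=\lim_{t\uparrow\mathcal T}\varphi^{(1)}_{s,t}(z)$ exists, lies in $\UD^*$ for every $z\in\mathbb A_{r(s)}$, and that the resulting merged family $\varphi_{s,t}$ satisfies EF3 across~$\mathcal T$ with an $L^d$-bound in time. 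Condition~(vi) is used essentially here: it is the precise integral form (matching Theorem~\ref{TH_EF0}) expressing that $\varphi^{(1)}_{s,t}(z)$ does not accumulate at~$0$ as $t\uparrow\mathcal T$, and the $L^d_{\mathrm{loc}}$ (not merely $L^1$) integrability near~$\mathcal T$ provides the order-$d$ bounds in EF3 for pairs $(s,t)$ straddling~$\mathcal T$. Once the merged family is shown to be an $L^d$-evolution family over $(D_t)$, Theorem~\ref{TH_EFsWHVF}(A) identifies its associated semicomplete weak holomorphic vector field with~$G$, closing the argument.
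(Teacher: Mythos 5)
Your proposal is correct in its main lines but follows a genuinely different construction from the paper, so a comparison is worthwhile. The paper's device for transferring the problem from $[0,\mathcal T)$ to $[0,+\infty)$ is a single explicit time reparametrization $\tau(t)=\mathcal T(1-e^{-t})$, made rigorous by a standalone change-of-variable result (Proposition~\ref{PR_tau}); that proposition shows once and for all that such a change of variable maps $L^d$-evolution families to $L^d$-evolution families and semicomplete $L^d$-fields to semicomplete $L^d$-fields, so conditions (i)--(iii) and (v) drop out of Theorem~\ref{TH_semi-char-non-deg} and~\cite[Prop.~5.16]{SMP3} without any limiting process. You instead truncate at $\mathcal T'<\mathcal T$, freeze $r$ and extend $G$ by zero, and let $\mathcal T'\uparrow\mathcal T$. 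This works, but it implicitly relies on the essential uniqueness of the representation produced by Theorem~\ref{TH_semi-char-non-deg} to make the truncated representations coherent across different $\mathcal T'$, and one must also check that the frozen field remains a semicomplete w.h.v.f.\ of order $d$ (it does: solutions up to $\mathcal T'$ come from the original field, and are constant thereafter). You should also keep in mind that the derivation of~(vi) is not merely that the trajectory lands in $\UD^*$ at $\mathcal T$; one has to turn that statement into an $L^d$ (not only $L^1$) bound, which is why the paper re-derives a pointwise differential inequality of the type appearing in the proof of Theorem~\ref{TH_EF0} (inequality~\eqref{EQ_nuno}) and then quotes the $AC^d$ regularity of $t\mapsto r(t)$ and $t\mapsto|\varphi_{0,t}(z_0)|$ on $[0,\mathcal T]$. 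Your phrase ``translates into $L^d_{\mathrm{loc}}$ control'' elides this step, though your appeal to ``the quantitative estimates driving the proof of Theorem~\ref{TH_EF0}'' indicates you have the right source in mind.

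For the converse, the two approaches are again logically equivalent but technically distinct. The paper shows directly that the field $G$ is semicomplete: via the same reparametrization one obtains fields $G^1$, $G^2$ that are known to be semicomplete by the non-degenerate and degenerate characterizations, and completeness of the $G$-flow reduces (by the general theory of Carath\'eodory ODEs) to showing that the $G^1$-trajectory stays in a compact sub-annulus, which follows from Lemma~\ref{LM_ML} once Theorem~\ref{TH_EF0} and condition~(vi) rule out $\varphi^1_{0,t}\to0$. The verification of WHVF3 across $\mathcal T$ uses the explicit estimate~\eqref{EQ_G_est}, in which the critical term $\alpha(t)\mathcal N(p(\cdot,t))$ is in $L^d_{\mathrm{loc}}$ precisely by~(vi). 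You instead build two evolution families and splice at $\mathcal T$, checking existence of the limit $\lim_{t\uparrow\mathcal T}\varphi^{(1)}_{s,t}(z)$ in $\UD^*$ and EF3 across the splice. That is a valid plan and your identification of~(vi) as the precise condition needed is correct, but the splicing route obliges you to redo, at the level of evolution families, the work the paper does once at the level of the ODE and the vector-field estimate. On balance your outline is sound; the paper's reparametrization-and-ODE route is cleaner and circumvents both the $\mathcal T'\uparrow\mathcal T$ limiting argument and the explicit EF3 splicing.
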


\begin{remark}\label{RM_P(t)}
Condition~(vi) in the above theorem is equivalent, provided
(ii)\,--\,(v) hold, to the requirement that  $P|_{[0,\mathcal T]}\in
L^d \big([0,\mathcal T],\Real\big)$ . Indeed, from $\mathcal
N(\mathcal K_r)=1$ for any $r\in[0,1)$, it follows that
$0\le\mathcal N\big(p(\cdot,t)\big)\le1$ for all $t\ge0$. Condition
(ii) implies that $t\mapsto \mathcal N\big(p(\cdot,t)\big)$ is
measurable in $[0,+\infty)$. Now our claim is clear in view of
condition~(v).
\end{remark}

In the proof of Theorem~\ref{TH_semi-char-mixed-type} we will use a change of variable, which clearly preserves evolution families of any given order. At the same time, the possibility of change of variable might be of some independent interest in principle. The following statement establishes much more general conditions for admissibility of a variable change.
\begin{proposition}\label{PR_tau}
Let $[0,+\infty)\ni t\mapsto \tau(t)\in [0,+\infty)$ be increasing and
locally absolutely continuous. Suppose that the inverse mapping $\tau^{-1}:\tau(I)\to [0,+\infty)$ is also locally absolutely continuous
and that $\tau'\in L_{\mathrm{loc}}^\infty\big([0,+\infty),\Real\big)$.
Then:
\begin{mylist}
\item[(i)] for any $d\in[1,+\infty]$ and any $L^d$-evolution family~$\big((D_t),(\varphi_{s,t})\big)$, the formulas $\varphi^*_{s,t}:=\varphi_{\tau(s),\tau(t)}$ and $D^*_t:=D_{\tau(t)}$, where $0\le s\le t$, define an $L^d$-evolution family $\big((D^*_t), (\varphi^*_{s,t})\big)$;

\item[(ii)] if $G:\mathcal D:=\{(z,t):t\ge0,\,z\in D_t\}\to\Complex$ is a semicomplete weak holomorphic vector field of order~$d$, then $G^*:\mathcal D^*:=\{(z,t):t\ge0,\,z\in D^*_t\}\to\Complex$ defined by $G^*(z,t):=G(z,\tau(t))\tau'(t)$, $t\ge0$, is also  a semicomplete weak holomorphic vector field of order~$d$;

\item[(iii)] if the vector field $G$ generates, in the sense of Theorem~\ref{TH_EFsWHVF}, the evolution family $(\varphi_{s,t})$, then the vector field $G^*$ generates, in the same sense, the evolution family $(\varphi^*_{s,t})$.
\end{mylist}
\end{proposition}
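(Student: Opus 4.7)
The plan is to verify each claim by a direct change-of-variable argument, using $\tau'\in L^\infty_{\mathrm{loc}}$ as the engine for transporting $L^d$-bounds between the $t$-parametrization and the $\tau(t)$-parametrization. The workhorse estimate I would invoke repeatedly is
\begin{equation*}
\int_S^T\bigl(k(\tau(\eta))\,\tau'(\eta)\bigr)^d\,d\eta\;\le\;
\|\tau'\|_{L^\infty([S,T])}^{d-1}\int_{\tau(S)}^{\tau(T)}k(\xi)^d\,d\xi,
\end{equation*}
valid for every nonnegative measurable $k$ (with the obvious modification if $d=\infty$), which shows that $k\in L^d\bigl([\tau(S),\tau(T)]\bigr)$ implies $(k\circ\tau)\,\tau'\in L^d([S,T])$.

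For part (i), I would first check that $(D^*_t)$ is itself a canonical $L^d$-system. Writing $u(t):=\omega(r(t))$, the composition $u\circ\tau$ is nonincreasing and locally absolutely continuous (since $\tau$ is locally Lipschitz and $u$ is AC), and the chain rule gives $(u\circ\tau)'(t)=u'(\tau(t))\,\tau'(t)$ a.e., which lies in $L^d_{\mathrm{loc}}$ by the display above. Axioms EF1 and EF2 for $(\varphi^*_{s,t})$ are purely algebraic. For EF3 I would apply the original EF3 on the interval $[\tau(S),\tau(T)]$ and substitute $\xi=\tau(\eta)$; the new integrand $k_z(\tau(\eta))\,\tau'(\eta)$ lies in $L^d([S,T])$ by the same display.

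For part (ii), WHVF1 and WHVF2 are immediate (the set $\{t:z\in D^*_t\}=\tau^{-1}(E_z)$ is measurable because $\tau^{-1}$ is continuous). For WHVF3, given compact $K\subset\mathcal D^*$, the image $K':=\{(z,\tau(t)):(z,t)\in K\}\subset\mathcal D$ is compact, so $|G^*(z,t)|\le k_{K'}(\tau(t))\,\tau'(t)$ on $K$, and the display supplies the needed $L^d$-majorant on $\pr_{\R}(K)$. Semicompleteness follows by a straightforward time-change of the ODE: given $s\ge 0$ and $z\in D^*_s=D_{\tau(s)}$, let $w$ be the nonextendable solution of $\dot w=G(w,t)$, $w(\tau(s))=z$, which is defined on $[\tau(s),+\infty)$ by semicompleteness of $G$, and set $w^*(t):=w(\tau(t))$; then $w^*$ is locally AC on $[s,+\infty)$ (composition of AC with locally Lipschitz) and the chain rule yields $\dot w^*(t)=\dot w(\tau(t))\,\tau'(t)=G(w^*(t),\tau(t))\,\tau'(t)=G^*(w^*(t),t)$ a.e., with $w^*(s)=z$.

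Part (iii) is then an observation: by Theorem~\ref{TH_EFsWHVF}(A) applied to $G$, the trajectory $w^*$ just constructed equals $\varphi_{\tau(s),\tau(t)}(z)=\varphi^*_{s,t}(z)$, and simultaneously solves $\dot w^*=G^*(w^*,t)$ with $w^*(s)=z$, so Theorem~\ref{TH_EFsWHVF}(B) identifies $G^*$ as the weak holomorphic vector field generating $(\varphi^*_{s,t})$. The main technical delicacy I anticipate is the a.e.\ chain rule used in (ii): for merely AC $w$, the identity $(w\circ\tau)'=\dot w(\tau)\,\tau'$ a.e.\ requires $\tau^{-1}$ to send null sets to null sets, which is exactly what the hypothesis that $\tau^{-1}$ is locally AC encodes; the role of $\tau'\in L^\infty_{\mathrm{loc}}$ is then to upgrade this pointwise identity to the $L^d$-preservation exploited throughout.
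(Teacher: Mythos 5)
Your proposal is correct and follows essentially the same route as the paper's proof: the paper isolates precisely your "workhorse estimate" as a standalone Claim (stated as $(f\circ\tau)\,\tau'\in L^d(I,\Real)$ whenever $f\in L^d(\tau(I),\Real)$, proved by the same change-of-variable-plus-H\"older computation, with the $d=\infty$ case handled via local absolute continuity of $\tau^{-1}$), and then applies it in exactly the order you describe to verify that $(D^*_t)$ is a canonical $L^d$-system and that EF3, WHVF3 transfer, with the ODE time-change argument settling semicompleteness and part (iii). The supplementary remarks you add about the a.e.\ chain rule and the role of $\tau^{-1}$ being locally AC are accurate and simply make explicit details the paper leaves to the reader.
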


\begin{proof}
First of all note that since  both $\tau$ and $\tau^{-1}$ are locally absolutely continuous, $\tau(I)$ and $f\circ\tau$ are measurable for any measurable set $I\subset[0,+\infty)$ and any measurable function~$f:\tau(I)\to\Real$. Moreover, we claim that

\noindent\underline{Claim}{. \it For any $d\in[1,+\infty]$, any measurable set $I\subset[0,+\infty)$, and any $f\in L^d\big(\tau(I),\Real\big)$, the function $(f\circ \tau)\,\tau'$ belongs to $L^d(I,\Real)$.}

Indeed, since $\tau^{-1}$ is locally absolutely continuous,  $\|f\circ\tau\|_{L^\infty(I)}\le \|f\|_{L^\infty(\tau(I))}$. This proves our claim for~$d=+\infty$. Now assume $d\in[1,+\infty)$. Making change of variable in the integral and using the H\"older inequality, we get
\begin{eqnarray*}
\|(f\circ \tau)\,\tau'\|^d_{L^d(I)}&=&\int_I\big|f(\tau(t))\big|^d\big(\tau'(t)\big)^d\,dt\\
& = &
\int_{\tau(I)}\big|f(\xi)\big|^d\big(\tau'(\tau^{-1}(\xi))\big)^{d-1}\,d\xi\\
&\le &
\|f\|^d_{L^d(\tau(I))}\cdot\|\tau'\circ\tau^{-1}\|_{L^\infty(\tau(I))}^{d-1}.
\end{eqnarray*}
Since $\tau$ is locally absolutely continuous, $\|\tau'\circ\tau^{-1}\|_{L^\infty(\tau(I))}\le \|\tau\|_{L^\infty(I)}<+\infty$. This completes the proof of the claim for $d\in[0,+\infty)$.

Now let us prove (i). Assume $(D_t)=(\mathbb A_{r(t)})$ is a canonical domain system of order~$d$, see Definition~\ref{def-cansys}. Then the function $t\mapsto \omega(r(t))$ is of class $AC^d\big([0,+\infty),\Real\big)$. From the above claim it follows that $t\mapsto \omega\big(r(\tau(t))\big)$ is also of class $AC^d\big([0,+\infty),\Real\big)$, and hence $(D^*_t)$ is also a canonical domain system of order~$d$.
Furthermore, it is evident that the family $(\varphi^*_{s,t})$ satisfies conditions EF1 and EF2 from Definition~\ref{def-ev}. So we only need to prove~EF3 for~$(\varphi^*_{s,t})$. Fix $I:=[S,T]\subset[0,+\infty)$ and $z\in D^*_S$. By EF3 for $(\varphi_{s,t})$ with $\tau(S)$ and $\tau(T)$ substituted for $S$ and $T$, respectively, there exists a
non-negative function ${k_{z,\tau(I)}\in L^{d}\big(\tau(I),\mathbb{R}\big)}$
such that
\begin{equation}\label{EQ_forphi}
|\varphi_{\tau(s),\tau(u)}(z)-\varphi_{\tau(s),\tau(t)}(z)|\leq\int_{\tau(u)}^{\tau(t)}k_{z,\tau(I)}(\xi)d\xi
\end{equation}
whenever $S\leq s\leq u\leq t\leq T.$ Now let $k^*_{z,I}:=(k_{z,\tau(I)}\circ\tau)\circ\tau'$. By our claim, $k^*_{z,I}\in L^{d}\big(I,\mathbb{R}\big)$. Then, using the change of variable $\xi=\tau(\sigma)$, from~\eqref{EQ_forphi} we get
$$
|\varphi^*_{s,u}(z)-\varphi^*_{s,t}(z)|\leq\int_{s}^{t}k^*_{z,I}(\sigma)d\sigma,
$$ which proves~EF3 for~$(\varphi^*_{s,t})$.

To prove~(ii) we observe first that $G^*$ is a weak holomorphic vector field of order~$d$. Indeed, conditions WHVF1 and WHVF2 in Definition~\ref{D_WHVF} hold trivially, while WHVF3 holds with $(k_{\tau_*(K)}\circ\tau)\,\tau'$ substituted for~$k_K$, where $\tau_*:\big(z,t\big)\mapsto\big(z,\tau(t)\big)$ and $k_{\tau_*(K)}$ is the function of class $L^d$ from condition WHVF3 for the original vector field~$G$. The fact that $G^*$ is semicomplete follows from the fact that $G$ is semicomplete and that if $t\mapsto w(t)$  solves the equation $dw/dt=G(w,t)$ then the function $w^*:=w\circ\tau$ is a solution to $dw^*/dt=G^*(w^*,t)$. By the same reason, (iii) takes place. Thus the proof is complete.
\end{proof}

\begin{proof}[{\bf Proof of Theorem~\ref{TH_semi-char-mixed-type}}]
Let us prove first that conditions of the theorem are necessary for
$G$ to be a semicomplete weak holomorphic field of order~$d$. By
Theorem~\ref{TH_EFsWHVF}, any semicomplete weak holomorphic field of
order~$d$ over $(D_t)$ generates an evolution family
$(\varphi_{s,t})$ over $(D_t)$ of the same order~$d$ such that
$(d/dt)\varphi_{s,t}(z)=G\big(\varphi_{s,t}(z),t\big)$ for any
$s\ge0$ and a.e. $t\ge s$.

According to Proposition~\ref{PR_tau}(i) the formulas
\begin{equation}
\varphi_{s,t}^1:=\varphi_{\tau(s),\tau(t)},\quad \tau(t):=\mathcal T(1-e^{-t}),\qquad
\varphi_{s,t}^2:=\varphi_{s+\mathcal T,t+\mathcal T}
\end{equation}
define two evolution families of order $d$: $(\varphi^1_{s,t})$ is
an evolution family over the non-degenerate $L^d$-canonical domain
system $(D_t^1):=(D_{\tau(t)})$ and $(\varphi^2_{s,t})$ is an
evolution family over the degenerate canonical domain system
$(D_t^2):=(D_{t+\mathcal T})$. Moreover, the semicomplete weak
holomorphic vector fields $G^1$ and $G^2$ corresponding in the sense
of Theorem~\ref{TH_EFsWHVF} to the evolution families
$(\varphi_{s,t}^1)$ and $(\varphi_{s,t}^2)$, respectively, are given
for a.e. $t\ge0$ by
\begin{equation}
G^1(z,t)=\frac{d\varphi_{s,t}^{1}(z)}{dt}|_{s:=t}=G(z,\tau(t))\mathcal T e^{-t},\quad
G^2(z,t)=\frac{d\varphi_{s,t}^{2}(z)}{dt}|_{s:=t}=G(z,t+\mathcal T).
\end{equation}
Hence, using Theorem~5.6 and Proposition~5.16 from~\cite{SMP3}, one can conclude that there exist
functions $\alpha:[0,+\infty)\to[0,+\infty)$, $C:[0,+\infty)\to\Real$, and $p:\mathcal
D\to\Complex$ satisfying conditions~(i)\,--\,(iii) and (v).

To prove (iv) we use essentially the same argument as in~\cite{SMP3}. Note that ${\mathcal
N(\mathcal K_r)=1}$ for any $r\in[0,1)$. Hence $\Im \mathcal N(p(\cdot,t))=0$ for all $t\ge0$ and
consequently $C(t)=(1/2\pi)\Im\int_\UC G(\rho\xi,t)/(\rho\xi)\,{|d\xi|}$, where we have fixed some
$\rho\in\big(r(0),1\big)$. Since by definition $G(\cdot,z)$ is measurable for all $z\in \mathbb
A_{r(0)}$ and for every $T>0$ there exists a non-negative $k_T\in L^d\big([0,T],\Real\big)$ such
that $|G(z,t)|\le k_T(t)$ whenever $|z|=\rho$ and $t\in[0,T]$, it follows with the help of the
Lebesgue dominated convergence theorem that $t\mapsto C(t)$ belongs to $L^d_{\rm
loc}\big([0,+\infty),\Real\big)$.

To check condition~(vi) fix any $z_0\in D_0$ and denote $\rho(t):=|\varphi_{0,t}(z_0)|$ for all
$t\ge0$. By continuity of $t\mapsto\rho(t)$ and $t\mapsto r(t)$ there exists $\delta>0$ such that
$r(t)+\delta<\rho(t)<1-\delta$ for all $t\in[0,\mathcal T]$. Repeating the argument to deduce
inequality~\eqref{EQ_nuzhno} in the proof of Theorem~\ref{TH_EF0}, we can conclude that for any $S$
and $T$ satisfying $0\le S\le T\le \mathcal T$,
\begin{equation}\label{EQ_nuno}
\rho(S)-\rho(T)\ge -\frac{\delta^2}{2}\int_S^T r'(t)\frac{\nu(t)}{r(t)}dt+\frac2{\delta}\int_S^T
\frac{r'(t)}{1-r(t)^2}dt,
\end{equation}
where $\nu(t):=\mathcal N\big(p(\cdot,t)\big)$ for all $t\in[0,\mathcal T]$. From the definition of
a canonical domain system of order $d$ and the definition of an $L^d$-evolution family it follows
that the functions $t\mapsto r(t)$ and $t\mapsto \rho(t)$ belong to $AC^d\big([0,\mathcal
T],\Real\big)$. Hence from~\eqref{EQ_nuno} it follows that the function $t\mapsto
-r'(t)\nu(t)/r(t)$ belongs to $L^d\big([0,\mathcal T],\Real\big)$. In view of Remark~\ref{RM_P(t)},
this means that (vi) holds.

Thus we have proved that conditions of the theorem are necessary for $G$ to be a semicomplete weak
holomorphic field of order~$d$. Let us now show that they are also sufficient.

Assume that there exist functions $\alpha:[0,+\infty)\to[0,+\infty)$, $C:[0,+\infty)\to\Real$, and
$p:\mathcal D\to\Complex$ satisfying conditions~(i)\,--\,(vi). The first step to prove that $G$ is
a semicomplete weak holomorphic vector field of order~$d$,  is to check conditions WHVF1, WHVF2,
and WHVF3 from Definition~\ref{D_WHVF}. The first two of these conditions follow directly from
(i)\,--\,(v) if it is taken into account that $t\mapsto r'(t)/r(t)$ is measurable in $[0,\mathcal T)$
according to the definition of a canonical domain system.

Let us show that $G$ satisfies also WHVF3. To this end we have to obtain some estimates for the
Villat kernel. Using its Laurent development, we in particular get
\begin{multline*}
|\mathcal K_r(z)-1|=2\left|\sum_{k=1}^{+\infty}\frac{z^k-(r^2/z)^k}{1-r^{2k}}\right|\le
\frac{2}{1-r^2}\sum_{k=1}^{+\infty}\big(|z|^k+|r^2/z|^k\big)\\ \le
\frac{2}{1-r^2}\left(\frac{|z|}{1-|z|}+\frac{r^2/|z|}{1-r^2/|z|}\right),\quad 0<r<|z|<1,
\end{multline*}
It follows that
\begin{eqnarray*}
&\displaystyle|\mathcal K_r(z)|\le A(r,|z|):=1+\frac{2}{1-r^2}\left(\frac{|z|}{1-|z|}+\frac{r}{|z|-r}\right),&\\
&\displaystyle|\mathcal K_r(r/z)-1|\le
B(r,|z|):=\frac{2r}{1-r^2}\left(\frac{1}{1-|z|}+\frac{1}{|z|-r}\right),&\quad 0<r<|z|<1,
\end{eqnarray*}
from which we deduce, using Remark~\ref{RM_int}, that for all $t\in[0,\mathcal T)$,
\begin{equation}\label{EQ_est_p}
|p(z,t)|\le A(r(t),|z|)\mathcal N\big(p(\cdot,t)\big)+B(r(t),|z|)\big[1-\mathcal
N\big(p(\cdot,t)\big)\big].
\end{equation}
The above inequality holds also for $t\ge \mathcal T$, because then $r(t)=0$ and consequently
$$\mathcal N\big(p(\cdot,t)\big)=1,\quad A(r(t),|z|)=A(0,|z|)=(1+|z|)/(1-|z|),$$ and \eqref{EQ_est_p} reduces to
the well-known estimate for the Carath\'eodory class, see e.g.~\cite[ineq.\,(11) on
p.\,40]{Pommerenke}.

We proceed as in the proof of~\cite[Lemma~5.14]{SMP3}. Let us fix any compact set $K\subset\mathcal
D$. Then there exists $\delta>0$ and $T>0$ such that $r(t)+\delta\le|z|\le1-\delta$ and $t\le T$
for all $(z,t)\in K$. From (i) and~\eqref{EQ_est_p} we deduce that

\begin{equation}\label{EQ_G_est}
|G(z,t)|\le |C(t)|+(1+M)\alpha(t)\mathcal N\big(p(\cdot,t)\big)+M\alpha(t)r(t)\quad\text{for
all}~(z,t)\in K,
\end{equation}
where $$M:=\frac{4/\delta}{1-r(0)^2}.$$ Recall that $t\mapsto r'(t)$ belong to $L^d\big([0,\mathcal
T],\Real\big)$. Hence from~(iv)\,--\,(vi) it follows that the right-hand side of~\eqref{EQ_G_est}
is a function of $t$ from $L_{\rm loc}^d\big([0,+\infty),\Real\big)$. This completes that proof
of~WHVF3.

It remains to show that the weak holomorphic vector field $G$ is semicomplete.
To this end we write:\def\Tau{\mathcal T}
$$G^1(w,t):=G(w,\tau(t))\Tau e^{-t},\quad t\ge0,~~w\in D_{\tau(t)},$$
where $\tau(t):=\Tau (1-e^{-t})$, and
$$
G^2(w,t):=G(w,t+\Tau),\quad t\ge0,~~w\in D_{t+\Tau}.
$$

By Theorem~5.6 and Proposition~5.18 from~\cite{SMP3}, the functions $G^1$ and
$G^2$ are semicomplete weak holomorphic vector fields of order~$d$ in $\mathcal
D_1:=\{(w,t):t\ge0,\,w\in D_{\tau(t)}\}$ and $\mathcal
D_2:=\{(w,t):t\ge0,\,w\in D_{t+\Tau}\}$ respectively. Therefore,
by~\cite[Theorem~5.1(B)]{SMP3}, there exist unique evolution families
$\big(\varphi^1_{s,t}\big)$ and $\big(\varphi^2_{s,t}\big)$ over the canonical
domain systems $(D_{\tau(t)})$ and $(D_{t+\Tau})$ respectively such that
$(\partial/\partial t)\varphi^1_{s,t}(z_1)=G^1\big(\varphi^1_{s,t}(z_1),t\big)$
 and $(\partial/\partial t)\varphi^2_{s,t}(z_2)=G^2\big(\varphi^2_{s,t}(z_2),t\big)$ for all
 $s\ge0$, a.e. $t\ge s$ and any $z_1\in D_{\tau(t)}$, $z_2\in D_{t+\Tau}$.

 By construction, the equation $dw/d\tau=G(w,\tau)$ is equivalent to $d
 w(\tau(t))/dt=G^1\big(w(\tau(t)),t\big)$ when $\tau\in[0,\Tau)$ and to the
 equation $dw(t+\Tau)/dt=G^2\big(w(t+\Tau),t\big)$ when $\tau\ge\Tau$. It
 follows now from the general theory of Carath\'eodory ODEs (see, e.g., \cite[Theorem
 2.3]{SMP3}) that it is sufficient to show that given $s\ge0$ and $z\in
 D_{\tau(s)}$ there exists $\delta=\delta(z,s)>0$ such that
 $r(\tau(t))+\delta<|\varphi^1_{s,t}(z)|<1-\delta$ for all $t\ge s$.
 Recall  that $t\mapsto r(\tau(t))$ and $t\mapsto |\varphi^1_{s,t}(z)|$ are
 continuous  functions in~$[s,+\infty)$, with  $r(\tau(t))<|\varphi^1_{s,t}(z)|<1$ for all~$t\ge0$.
 Hence according to Lemma~\ref{LM_ML}, it
 remains to see that $\varphi^1_{0,t}\not\to0$ as $t\to+\infty$. The latter follows
 readily from~(vi) and Theorem~\ref{TH_EF0}. The proof is complete.
\end{proof}

\section{Conformal types of Loewner chains via vector fields}\label{S_conf-type-VF}

In this section we combine results of
Sections~\ref{S_conf_type_EF}\,--\,\ref{S_mixed-type} to get the conformal
classification of Loewner chains in terms of the corresponding vector fields.

Let $d\in[1,+\infty]$. Let $(D_t)=\big(\mathbb A_{r(t)}\big)$ be a canonical
domain system of order~$d$ and $(f_t)$ a Loewner chain of order~$d$
over~$(D_t)$. By Corollary~\ref{C_from_LC_to_VF} there exists a unique vector
field $G$ associated with~$(f_t)$, i.e. a weak holomorphic vector field $G$ of
order~$d$ over~$(D_t)$ satisfying for a.e. $s\ge0$ equality~\eqref{EQ_GK-PDE}.

Let us assume first that $(D_t)$ is non-degenerate, i.e.
$D_t:=\mathbb A_{r(t)}$ with $r(t)>0$ for each $t\ge0$. The
following theorem characterizes the conformal type of $(f_t)$ via
the associated vector field $G$.

Recall (see Remark~\ref{RM_int}) that for a.e. $t\ge0$, the function
$G_t=G(\cdot,t)$ admits the following representation
\begin{equation*}
G_t(z) = z\left(\frac{r'(t)}{r(t)}p_t(z)+iC_t\right),
\end{equation*}
where $C_t\in\Real$,
\begin{equation*}
p_t(z):=\int_\UC\mathcal
K_r(z/\xi)d\mu^t_1(\xi)+\int_\UC\big[1-\mathcal
K_r(r\xi/z)\big]d\mu^t_2(\xi),\quad z\in\mathbb A_r,~r:=r(t),
\end{equation*}
and $\mu_1^t$ and $\mu_2^t$ are positive Borel measures on the unit
circle~$\UC$ subject to the condition~$\mu_1^t(\UC)+\mu_2^t(\UC)=1$.
Denote
\begin{align*}
I_1&:=-\int\limits_{0}^{+\infty}\Re\mathcal
 N\big(w\mapsto G_t(w)/w\big)\,dt=-\int\limits_{0}^{+\infty}r'(t)\frac{\mu_1^t(\UC)}{r(t)}dt,\quad\\
I_2&:=-\int\limits_{0}^{+\infty}\left(\frac{r'(t)}{r(t)}-\Re
\mathcal
N\big(w\mapsto G_t(w)/w\big)\right)dt=-\int\limits_{0}^{+\infty}r'(t)\frac{\mu_2^t(\UC)}{r(t)}dt.\\
\end{align*}

\begin{theorem}\label{TH_conf-class-VF-nondeg}
In the above notation, the following statements hold:
\begin{itemize}
  \item[(i)] the evolution family $(\varphi_{s,t})$ is of type I if and only
  if $I_1+I_2<+\infty$;
  \item[(ii)] the evolution family $(\varphi_{s,t})$ is of type II if and only if $I_1<+\infty$ and $I_2=+\infty$;
  \item[(iii)] the evolution family $(\varphi_{s,t})$ is of type III if and only if $I_1=+\infty$ and $I_2<+\infty$;
  \item[(iv)] the evolution family $(\varphi_{s,t})$ is of type IV if and only if
  $I_1=I_2=+\infty$.
\end{itemize}
\end{theorem}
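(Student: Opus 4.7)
The theorem will follow by combining the classification via limit behaviour of $(\varphi_{s,t})$ given in Theorem~\ref{TH_types_nondeg} with the vector-field criterion of Theorem~\ref{TH_EF0} applied both to $(\varphi_{s,t})$ and to its mirror $(\tilde\varphi_{s,t})$. By Definition~\ref{D_conftyp} and the paragraph following Theorem~\ref{Th_from_EF_to_LC}, the conformal type of $(f_t)$ equals that of $(\varphi_{s,t})$, and by Corollary~\ref{C_from_LC_to_VF} the $G$ appearing in the statement is precisely the vector field generating $(\varphi_{s,t})$ in the sense of Theorem~\ref{TH_EFsWHVF}. The plan is to express each of the three limit conditions of Theorem~\ref{TH_types_nondeg} (namely $r_\infty>0$, $\varphi_{0,t}\to0$, $\tilde\varphi_{0,t}\to0$) as a dichotomy for the integrals $I_1,I_2$.

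Case (i) is pure integration. Using the normalization $\mu_1^t(\UC)+\mu_2^t(\UC)=1$ built into the definition of $\ParClass_{r(t)}$, one has $\log(r(0)/r_\infty)=-\int_{0}^{+\infty}r'(t)/r(t)\,dt=I_1+I_2$, so $r_\infty>0$ is equivalent to $I_1+I_2<+\infty$; combined with Theorem~\ref{TH_types_nondeg}(i) this settles case~(i). In particular for cases (ii)--(iv) one may assume $r_\infty=0$, so Theorem~\ref{TH_EF0} is available. Applied directly to $(\varphi_{s,t})$ together with the integrand identity of Remark~\ref{RM_int}, it yields $\varphi_{0,t}\to 0\iff I_1=+\infty$ at once.

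The main step is the parallel equivalence $\tilde\varphi_{0,t}\to0\iff I_2=+\infty$. For this, I would differentiate $\tilde w(t):=r(t)/w(t)$, where $\dot w=G(w,t)$, to obtain the vector field of $(\tilde\varphi_{s,t})$,
\[
\tilde G(\tilde w,t)=\tilde w\Bigl[-iC(t)+\frac{r'(t)}{r(t)}\bigl(1-p(r(t)/\tilde w,t)\bigr)\Bigr].
\]
Since $(\tilde\varphi_{s,t})$ is an $L^d$-evolution family over the same canonical domain system, Theorem~\ref{TH_semi-char-non-deg} forces $\tilde p(\cdot,t):=1-p(r(t)/\cdot,t)$ to lie in $\ParClass_{r(t)}$. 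A one-line Laurent computation then gives $\mathcal N(\tilde p(\cdot,t))=1-\mathcal N(p(\cdot,t))=1-\mu_1^t(\UC)=\mu_2^t(\UC)$, since the substitution $w\mapsto r(t)/w$ preserves the free Laurent coefficient. Applying Theorem~\ref{TH_EF0} to $(\tilde\varphi_{s,t})$ now produces $\tilde\varphi_{0,t}\to0\iff I_2=+\infty$.

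Assembling the three equivalences and reading off each clause of Theorem~\ref{TH_types_nondeg}, together with the identity $r_\infty=0\iff I_1+I_2=+\infty$ from the first step, yields all four cases. The principal obstacle I anticipate is the derivation of $\tilde G$ and the identification $\mathcal N(\tilde p(\cdot,t))=\mu_2^t(\UC)$; once these are in hand, everything else is bookkeeping.
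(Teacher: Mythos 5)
Your proposal is correct and follows essentially the same route as the paper: reduce each conformal type, via Theorem~\ref{TH_types_nondeg}, to the limit behaviour of $\varphi_{0,t}$ and $\tilde\varphi_{0,t}$, and then translate those into the integral conditions $I_1=+\infty$ and $I_2=+\infty$ respectively via Theorem~\ref{TH_EF0} and Remark~\ref{RM_int}. The only cosmetic difference is that you spell out the computation of $\tilde G$ and the identity $\mathcal N(\tilde p(\cdot,t))=\mu_2^t(\UC)$, whereas the paper simply cites the relevant fact about $(\tilde\varphi_{s,t})$ from \cite[Example~6.3]{SMP3}.
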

\begin{proof}[{\bf Proof of Theorem~\ref{TH_conf-class-VF-nondeg}}]
Note that $I_1,I_2\ge0$ and that $I_1+I_2<+\infty$ if and only if
$r_\infty:=\lim_{t\to+\infty}r(t)>0$.

Let $(\varphi_{s,t})$ be the evolution family of the Loewner
chain~$(f_t)$ and let $\tilde
\varphi_{s,t}(z):=r(t)/\varphi_{s,t}(r(s)/z)$ for all $s\ge0$, all
$t\ge s$ and all $z\in D_s$. Then by Theorem~\ref{TH_EF0}, combined
with Remark~\ref{RM_int}, $\varphi_{0,t}\to 0$ as $t\to+\infty$ if
and only if $I_1=+\infty$. Similarly, with the help
of~\cite[Example~6.3]{SMP3} we conclude that $\tilde\varphi_{0,t}\to
0$ as $t\to+\infty$ if and only if $I_2=+\infty$.

Now Theorem~\ref{TH_conf-class-VF-nondeg} follows immediately from
Theorem~\ref{TH_types_nondeg}.
\end{proof}

Assume now that the canonical domain system $(D_t)$ is either
degenerate or of mixed type. In other words, we suppose that
$\{t\ge0:r(t)=0\}\neq\emptyset$. Let us denote
$$
I:=-\int\limits_{0}^{+\infty}\Re\mathcal
 N\big(w\mapsto G(w,t)/w\big)\,dt.
$$

\begin{proposition}\label{PR_conf-class-VF-mixed}
In the above notation, the Loewner chain~$(f_t)$ is of type II if
$I<+\infty$ and of type~$IV$ otherwise.
\end{proposition}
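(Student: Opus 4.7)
The plan is to apply Proposition~\ref{TH_types_mixed}: it suffices to show that $I=+\infty$ if and only if the evolution family $(\varphi_{s,t})$ of $(f_t)$ satisfies $\varphi_{0,t}\to 0$ as $t\to+\infty$.

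First I would rewrite $I$ via the representation supplied by Theorem~\ref{TH_semi-char-mixed-type}: $G(w,t)=w\bigl[iC(t)-\alpha(t)p(w,t)\bigr]$ with $p(\cdot,t)\in\ParClass_{r(t)}$. Using $\mathcal N(\mathcal K_r)=1$ for every $r\in[0,1)$ (so that $\mathcal N(p(\cdot,t))=\mu_1^t(\UC)$ when $t<\mathcal T$ and $\mathcal N(p(\cdot,t))=p(0,t)=1$ when $t\ge\mathcal T$, in either case a non-negative real number), one gets $\Re\mathcal N\bigl(w\mapsto G(w,t)/w\bigr)=-\alpha(t)\mathcal N(p(\cdot,t))$, hence $I=\int_0^{+\infty}\alpha(t)\mathcal N(p(\cdot,t))\,dt$. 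Set $\mathcal T:=\inf\{t\ge 0:r(t)=0\}$. Condition~(vi) of Theorem~\ref{TH_semi-char-mixed-type} combined with Remark~\ref{RM_P(t)} yields $\int_0^{\mathcal T}\alpha(t)\mathcal N(p(\cdot,t))\,dt<+\infty$. Consequently,
$$I<+\infty\quad\Longleftrightarrow\quad\int_{\mathcal T}^{+\infty}\alpha(t)\,dt<+\infty.$$

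Next I would reduce the convergence of $\varphi_{0,t}$ to the degenerate case by the same shift argument used in the proof of Proposition~\ref{TH_types_mixed}. The family $\varphi_{\mathcal T+s,\mathcal T+t}$ is an evolution family over the degenerate canonical system $(D_{\mathcal T+t})\equiv(\UD^*)$, and by \cite[Proposition~5.15]{SMP3} its extension $\phi_{s,t}$ (defined by $\phi_{s,t}(0):=0$) is a classical evolution family in the unit disk. The extended vector field $\hat G(w,t)=w\bigl[iC(\mathcal T+t)-\alpha(\mathcal T+t)p(w,\mathcal T+t)\bigr]$ fixes the origin with $\hat G'(0,t)=iC(\mathcal T+t)-\alpha(\mathcal T+t)$, so differentiating the Carath\'eodory ODE at $z=0$ gives
$$|\phi'_{0,t}(0)|=\exp\!\Bigl(-\!\int_0^{t}\alpha(\mathcal T+s)\,ds\Bigr).$$
By the Schwarz lemma combined with normality of $(\phi_{0,t})$, one has $\phi_{0,t}\to 0$ uniformly on compacta in $\UD$ if and only if $\phi'_{0,t}(0)\to 0$, equivalently iff $\int_{\mathcal T}^{+\infty}\alpha(\xi)\,d\xi=+\infty$. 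Since $\varphi_{0,t}=\varphi_{\mathcal T,t}\circ\varphi_{0,\mathcal T}$ on $\UD^*$ with $\varphi_{0,\mathcal T}$ non-constant, this is in turn equivalent to $\varphi_{0,t}\to 0$.

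Combining the two equivalences gives $I=+\infty\Longleftrightarrow\varphi_{0,t}\to 0$, and Proposition~\ref{TH_types_mixed} completes the proof. The main technical point is the explicit derivative computation for $|\phi'_{0,t}(0)|$ in the degenerate phase; the conceptual input is the observation, afforded by condition~(vi) of Theorem~\ref{TH_semi-char-mixed-type} and Remark~\ref{RM_P(t)}, that the contribution of the annular phase $[0,\mathcal T]$ to $I$ is always finite, so the dichotomy between types~II and~IV is entirely decided by the behaviour of $\alpha$ on $[\mathcal T,+\infty)$.
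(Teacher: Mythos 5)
Your proposal is correct and follows essentially the same route as the paper: invoke Proposition~\ref{TH_types_mixed}, shift by $T\ge\mathcal T$ to reduce to the degenerate case via \cite[Proposition~5.15]{SMP3}, compute $|\phi'_{0,t}(0)|$ from the vector field, and translate its decay to the divergence of $I$. The only (welcome) refinement is that you spell out, via condition~(vi) of Theorem~\ref{TH_semi-char-mixed-type} and Remark~\ref{RM_P(t)}, why the portion of $I$ over $[0,\mathcal T]$ is always finite, a step the paper leaves implicit.
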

\begin{proof}
By Proposition~\ref{TH_types_mixed} it is sufficient to show that
$\varphi_{0,t}\to0$ as $t\to+\infty$ if and only if $I=+\infty$.

Choose $T>0$ such that $r(T)=0$. From \cite[Proposition 5.15]{SMP3}
it follows that the family $(\phi_{s,t})_{0\le s\le t}$ defined by
$\phi_{s,t}(z):=\varphi_{s+T,t+T}(z)$ for all $z\in \UD^*$ and by
$\phi_{s,t}(0)=0$, is an $L^d$-evolution family in the unit disk.
Hence the vector field $\mathcal G(w,t):=G(w,t+T)$, $t\ge0$,
$w\in\UD^*$, extended to the origin by $\mathcal G(0,t)=0$, is a
Herglotz vector field in~$\UD$ and $d\phi_{s,t}(z)/dt=\mathcal
G(\phi_{s,t}(z),t)$ for all $z\in\UD$ and a.e. $t\ge 0$. It follows
that $$|\phi_{0,t}'(0)|=\exp\int_0^t \Re \mathcal
G'(0,\xi)d\xi=\exp\left(-\int_0^t\Re\mathcal
 N\big(w\mapsto G(w,\xi)/w\big)d\xi\right).$$
Thus $\phi_{0,t}'(0)\to0$ as $t\to+\infty$ if and only if
$I=+\infty$. This completes the proof.
\end{proof}

\end{document}